\let\@fnsymbol\@arabic
\newtheorem{theorem}{Theorem}\numberwithin{theorem}{section}
\newtheorem*{theorem*}{Theorem}
\newtheorem*{lemma*}{Lemma}
\newtheorem*{definition*}{Definition}
\newtheorem*{proposition*}{Proposition}
\newtheorem{lemma}[theorem]{Lemma}
\newtheorem{proposition}[theorem]{Proposition}
\newtheorem{theoremm}{Theorem}\numberwithin{theoremm}{subsection}
\newtheorem{deffinition}[theoremm]{Definition}
\newtheorem{lemmma}[theoremm]{Lemma}
\newtheorem{corrollary}[theoremm]{Corollary}
\newtheorem{nottation}[theoremm]{Notation}
\newtheorem{propposition}[theoremm]{Proposition}
\newtheorem{quesstion}[theoremm]{Question}
\newtheorem{problemm}[theoremm]{Problem}
\numberwithin{theoremmm}{subsubsection}
\theoremstyle{remark}
\newtheorem*{example*}{Example}
\newcommand{\Rad}{\operatorname{Rad}}
\newcommand{\Aut}{\operatorname{Aut}}
\newcommand{\Alt}{\operatorname{Alt}}
\newcommand{\ord}{\operatorname{ord}}
\newcommand{\Sym}{\operatorname{Sym}}
\newcommand{\id}{\operatorname{id}}
\newcommand{\e}{\mathrm{e}}
\newcommand{\M}{\operatorname{M}}
\newcommand{\GL}{\operatorname{GL}}
\newcommand{\D}{\operatorname{D}}
\newcommand{\Mod}[1]{\ (\textup{mod}\ #1)}
\newcommand{\IF}{\mathbb{F}}
\newcommand{\IZ}{\mathbb{Z}}
\newcommand{\Q}{\operatorname{Q}}
\newcommand{\inv}{\operatorname{inv}}
\newcommand{\Ccal}{\mathcal{C}}
\newcommand{\imp}{\mathrm{imp}}
\newcommand{\Bcal}{\mathcal{B}}
\newcommand{\AGL}{\operatorname{AGL}}
\newcommand{\SD}{\operatorname{SD}}
\newcommand{\SmallGroup}{\operatorname{SmallGroup}}
\newcommand{\AC}{\operatorname{AC}}
\newcommand{\comp}{\operatorname{comp}}
\newcommand{\orth}{\operatorname{orth}}
\newcommand{\Cscr}{\mathscr{C}}
\begin{document}

\title{Compositions and parities of complete mappings and of orthomorphisms}

\author{Alexander Bors\textsuperscript{1} \and Qiang Wang\thanks{School of Mathematics and Statistics, Carleton University, 1125 Colonel By Drive, Ottawa ON K1S 5B6, Canada. \newline First author's e-mail: \href{mailto:alexanderbors@cunet.carleton.ca}{alexanderbors@cunet.carleton.ca} \newline Second author's e-mail: \href{mailto:wang@math.carleton.ca}{wang@math.carleton.ca} \newline The authors are supported by the Natural Sciences and Engineering Research Council of Canada (RGPIN-2017-06410). \newline 2020 \emph{Mathematics Subject Classification}: Primary: 20D60. Secondary: 05B15, 20D10, 20D15, 94A60. \newline \emph{Keywords and phrases}: Alternating group; Complete mappings; Cryptography; Finite fields; Finite groups; Harmonious groups; Latin squares; Orthomorphisms; Permutation groups; Primitive groups; Round functions; Symmetric group.}}

\date{\today}

\maketitle

\abstract{We determine the permutation groups $P_{\comp}(\IF_q),P_{\orth}(\IF_q)\leq\Sym(\IF_q)$ generated by the complete mappings, respectively the orthomorphisms, of the finite field $\IF_q$ -- both are equal to $\Sym(\IF_q)$ unless $q\in\{2,3,4,5,8\}$. More generally, denote by $P_{\comp}(G)$, respectively $P_{\orth}(G)$, the subgroup of $\Sym(G)$ generated by the complete mappings, respectively the orthomorphisms, of the group $G$. Using recent results of Eberhard-Manners-Mrazovi{\'c} and M{\"u}yesser-Pokrovskiy, we show that for each large enough finite group $G$ that has a complete mapping (i.e., whose Sylow $2$-subgroups are trivial or noncyclic), $P_{\comp}(G)=\Sym(G)$ and $P_{\orth}(G)\geq\Alt(G)$. We also prove that $P_{\orth}(G)=\Sym(G)$ for every large enough finite \emph{solvable} group $G$ that has a complete mapping. Proving these results requires us to study the parities of complete mappings and of orthomorphisms. Some connections with known results in cryptography and with parity types of Latin squares are also discussed.}

\section{Introduction}\label{sec1}

\subsection{Background and main results}\label{subsec1P1}

Let $G$ be a group, written multiplicatively. A \emph{complete mapping of $G$} is a permutation $f$ of $G$ such that the function $\tilde{f}=\id\cdot f:G\rightarrow G, g\mapsto gf(g)$, is also a permutation of $G$. An \emph{orthomorphism of $G$} is a permutation $f$ of $G$ such that $g\mapsto g^{-1}f(g)$ is also a permutation of $G$. These two notions are closely linked -- in fact, there are two natural bijections from the set of complete mappings of $G$ to the set of orthomorphisms of $G$: the mapping $f\mapsto\tilde{f}$ (whence $\tilde{f}$ is also called the \emph{orthomorphism of $G$ associated with $f$}), and $f\mapsto f\circ\inv$, where $\inv:G\rightarrow G, g\mapsto g^{-1}$, is the inversion function of $G$. In case $G$ is abelian, the function $f\mapsto\inv\circ f$ is also such a bijection (and $\inv\circ f$ is more commonly written $-f$ if $G$ is written additively). See Evans' book \cite{Eva92a} for a concise introduction to the theory of complete mappings and orthomorphisms.

Complete mappings were originally introduced by Mann in 1942 \cite{Man42a}, motivated by a combinatorial application (the construction of mutually orthogonal Latin squares -- see also our Subsection \ref{subsec5P2}). Later, various authors studied the question which groups have at least one complete mapping. In 1950, Bateman \cite{Bat50a} proved that every infinite group has a complete mapping. For finite groups, the question turned out to be more delicate and led to the celebrated \emph{Hall-Paige Conjecture}, which is now a theorem thanks to the work of Hall and Paige \cite{HP55a}, Wilcox \cite{Wil09a}, Evans \cite{Eva09a}, and Bray et al. \cite[Section 2]{BCCSZ20a}. It states that a finite group $G$ has a complete mapping if and only if it satisfies what we will henceforth refer to as the \emph{Hall-Paige condition}: that the Sylow $2$-subgroups of $G$ are trivial or noncyclic. See Evans' book \cite{Eva18a}, an expansion of his other book \cite{Eva92a} cited earlier, for a unified proof of the Hall-Paige conjecture. Another proof, using bounds on the number of complete mappings of a given group that are obtained through discrete Fourier analysis, is in the recent paper \cite{EMM22a}.

A \emph{complete mapping}, respectively \emph{orthomorphism}, \emph{of a field $K$} is simply one of the underlying additive group $(K,+)$. Complete mappings and orthomorphisms of \emph{finite} fields have been heavily studied, especially with regard to their polynomial representations, starting with Niederreiter and Robinson's 1982 paper \cite{NR82a}. Later, various practical applications of complete mappings and orthomorphisms of finite fields were discovered, such as in check-digit systems \cite{Sch00a,SW10a} and cryptography \cite{MP14a,SGCGM12a}. Naturally, this spurred even greater interest in them, see \cite{ITW17a,SLGQ21a,TZH14a,Win14a,WLHZ14a,XC15a,ZHC15a}. It should be noted in this context that the most important special case for cryptographic applications is when the finite field in question has characteristic $2$, for which complete mappings are the same as orthomorphisms.

Let us talk some more about the cryptographic applications of orthomorphisms, as this serves to motivate the main results of our paper. There are several reasons why these kinds of functions are valued in cryptography. Many cryptographic protocols involve applying an operation of the form $v\mapsto v+f(v)$, where $v\in\IF_2^m$ may be the cleartext to be encrypted, or a segment of it, or an intermediate result of the encryption process. Moreover, $f$ is some function $\IF_2^m\rightarrow\IF_2^m$, and, naturally, one wants the operation $v\mapsto v+f(v)$ to be injective. While $f$ need not be injective itself (i.e., an orthomorphism) for this to happen, choosing $f$ as an orthomorphism leads to several other desirable properties, such as $f$ being perfectly balanced (Mittenthal, \cite{Mit95a}) and \enquote{usually} having different input and output differentials (Wu and Ye, \cite{WY06a}).

A given cryptographic protocol consists of applying a composition of several so-called \emph{round functions}, parametrized by keys, to the cleartext in order to obtain the ciphertext. In view of what was said in the previous paragraph, it is not surprising that orthomorphisms are ubiquitous in the design of these round functions, and various cryptographic structures, such as the Lai-Massey scheme \cite{Vau99a}, the block cipher FOX \cite{JV05a} and the stream cipher Loiss \cite{FFZFW11a}, explicitly need orthomorphisms to be used in parts of the definition of their round functions in order to ensure certain desirable properties of the structure as a whole.

In addition to this, there are several notable protocols where all round functions themselves are orthomorphisms. For example, in a so-called Feistel cipher (a major class of block ciphers), a given round function $R_{\kappa}:\IF_2^n\rightarrow\IF_2^n$, depending on the key $\kappa$, is defined in terms of a certain other vectorial Boolean function (sometimes called the associated \emph{Feistel transformation}) $F_{\kappa}:\IF_2^{n/2}\rightarrow\IF_2^{n/2}$ as follows: Let $v\in\IF_2^n$, and write $v=(v_{\ell},v_r)$ where $v_{\ell},v_r\in\IF_2^{n/2}$ are the left and right half segments respectively of the string $v$. Then $R_{\kappa}(v)=(v_r,v_{\ell}+F_{\kappa}(v_r))$. This function $R_{\kappa}$ is always a permutation of $\IF_2^n$, with inverse function $(w_{\ell},w_r)\mapsto(w_r+F_{\kappa}(w_{\ell}),w_{\ell})$. Moreover, as observed by Mileva and Markovski in \cite{MM14a}, $R_{\kappa}$ is a complete mapping of $\IF_2^n$ if and only if $F_{\kappa}$ is bijective. Indeed, if $F_{\kappa}$ is bijective, then $\widetilde{R_{\kappa}}:(v_{\ell},v_r)\mapsto (v_{\ell}+v_r,v_{\ell}+v_r+F_{\kappa}(v_r))$ is a permutation of $\IF_2^n$, with inverse function $(u_{\ell},u_r)\mapsto(u_{\ell}+F_{\kappa}^{-1}(u_{\ell}+u_r),F_{\kappa}^{-1}(u_{\ell}+u_r))$. And, on the other hand, if $\widetilde{R_{\kappa}}$ is bijective, then the function $\IF_2^{n/2}\rightarrow\IF_2^n$, $v\mapsto \widetilde{R_{\kappa}}(v,v)$, is injective, which is only possible if $F_{\kappa}$ is injective (and thus bijective). In \cite[Sections 3--5]{MM14a}, three concrete examples of Feistel ciphers all of whose round functions are orthomorphisms are discussed, namely GOST, MIBS and Skipjack. More such ciphers, studied earlier by other authors, are mentioned in \cite[Subsection 1.1]{MM14a}.

Studying the round functions of a cryptographic cipher is key to detecting cryptoanalytic weaknesses of that cipher. One notable line of research, started in 1975 by Coppersmith and Grossman \cite{CG75a}, is concerned with detecting weaknesses of a cipher through properties of the permutation group generated by all round functions of that cipher. In this vein, Kaliski, Rivest and Sherman \cite{KRS88a} observed that this permutation group must not be too small, and Paterson \cite{Pat99a} observed that it should not be imprimitive (i.e., that the round functions should not preserve a common, nontrivial block partition of the underlying group $\IF_2^n$). Paterson's paper motivated various authors to verify for many ciphers that their round functions generate a primitive group (and, if possible, determine this group precisely), see \cite{ACTT18a,ACDS14a,ACS17a,CDS09a,CDS09b,EG83a,HSW94a,MPW94a,SW08a,SW15a,Wer93a}. It should be noted that these group-theoretic conditions are necessary, but not sufficient, for the cipher to be strong -- an example of a weak cipher whose round functions generate the full symmetric group was given by Murphy, Paterson and Wild \cite{MPW94a}.

Motivated by these earlier results, one may ask whether the property that all round functions of a cryptographic cipher are orthomorphisms (or, equivalently if the cipher is over $\IF_2^n$, complete mappings) in and of itself poses a security risk, at least with regard to the group-theoretic conditions of Kaliski, Rivest, Sherman and Paterson. This leads to the following vague question, which we pose for general finite groups:

\begin{quesstion}\label{mainQues1}
Let $G$ be a finite group, and denote by $P_{\comp}(G),P_{\orth}(G)\leq\Sym(G)$ the permutation groups generated by the complete mappings, respectively the orthomorphisms, of $G$. When are these permutation groups \enquote{large} and primitive?
\end{quesstion}

Praeger and Saxl \cite{PS80a} proved that among all primitive permutation groups on a set $X$, the two groups $\Sym(X)$ and $\Alt(X)$ are, asymptotically speaking, significantly larger than all others (more precisely, each primitive group over $X$ that is not isomorphic to one of those two has size less than $4^{|X|}$). So, in fact, it would be desirable to have $\Alt(G)\leq P_{\comp}(G)\cap P_{\orth}(G)$ -- and indeed, one of our main results states that this holds if $G$ is large enough (see Theorem \ref{mainTheo2} below).

For cryptographic applications, the finite elementary abelian groups $\IF_p^n$ (i.e., the underlying additive groups $(K,+)$ of finite fields $K$) are of particular interest, and we were able to determine $P_{\comp}(\IF_p^n)=P_{\comp}(\IF_{p^n})$ and $P_{\orth}(\IF_p^n)$ precisely, see Theorem \ref{mainTheo1} below. At first glance, one may think that earlier results already imply at least that $\Alt(\IF_2^n)\leq P_{\orth}(\IF_2^n)$ for all but a few small $n$ -- for example, as was mentioned above, the round functions of GOST all are orthomorphisms, and in \cite{ACS17a}, it was shown that the round functions of certain so-called \emph{GOST-like ciphers} generate the alternating group. A more detailed look reveals that this is not as easy as it may seem, though, and the authors do not believe that it is obvious from known results that $\Alt(\IF_2^n)\leq P_{\orth}(\IF_2^n)$ even just for infinitely many $n$ -- see Subsection \ref{subsec5P1} for a more detailed analysis.

\begin{theoremm}\label{mainTheo1}
Let $q$ be a prime power. If $q\not=2,3,4,5,8$, then $P_{\comp}(\IF_q)=P_{\orth}(\IF_q)=\Sym(\IF_q)$. For the other values of $q$, the groups $P_{\comp}(\IF_q)$ and $P_{\orth}(\IF_q)$ are as in Table \ref{cTable}. In particular, both of these groups are primitive if $q\geq3$.
\end{theoremm}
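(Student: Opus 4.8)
The plan is to handle the generic range $q \notin \{2,3,4,5,8\}$ by a single uniform argument and to dispatch the five exceptional values by direct computation, reading off the entries of Table \ref{cTable}. For the generic range I would reduce the problem to three ingredients: (i) that $P_{\comp}(\IF_q)$ and $P_{\orth}(\IF_q)$ are primitive; (ii) that each contains a nonidentity permutation whose support has bounded size, independent of $q$; and (iii) a parity computation. Granting (i) and (ii), a Jordan-type theorem — a primitive subgroup of $\Sym(\IF_q)$ containing a nonidentity element of bounded support must contain $\Alt(\IF_q)$ once $q$ is large — yields $\Alt(\IF_q) \leq P_{\comp}(\IF_q)$ and $\Alt(\IF_q) \leq P_{\orth}(\IF_q)$. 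Ingredient (iii) then decides between $\Alt(\IF_q)$ and $\Sym(\IF_q)$: the group equals $\Sym(\IF_q)$ precisely when it contains an odd permutation.

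For primitivity I would exhibit enough affine maps inside each group. An affine map $x \mapsto cx + a$ is a complete mapping exactly when $c \neq 0, -1$, and an orthomorphism exactly when $c \neq 0, 1$; in either case, for $q$ large, the admissible maps generate the full one-dimensional affine group $\AGL_1(\IF_q)$ acting on $\IF_q$ (the translations arise as products such as $(cx+a)\circ(cx)^{-1}$, which works even in characteristic $2$ where translations are not themselves complete mappings, and the available scalars generate $\IF_q^{*}$ once $q > 3$). Since $\AGL_1(\IF_q)$ is $2$-transitive, both $P_{\comp}(\IF_q)$ and $P_{\orth}(\IF_q)$ are $2$-transitive, hence primitive; a short separate check covers $q = 3,4,5,8$ for the ``primitive if $q \geq 3$'' clause, while $q = 2$ admits no complete mapping at all.

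The heart of the argument, and the step I expect to be the main obstacle, is (ii). Affine maps alone generate only $\AGL_1(\IF_q)$, every nontrivial element of which has support nearly all of $\IF_q$, so I must leave the affine world. The idea is a switching argument: if two orthomorphisms $f$ and $g$ agree outside a set $S$, then $f \circ g^{-1}$ fixes $\IF_q \setminus g(S)$ and hence has support at most $|S|$. Starting from a linear orthomorphism $x \mapsto cx$, I would look for a second orthomorphism $g$ differing from it on a set $S$ of bounded size. A direct check rules out $|S| = 2$, so one is led to permute the values on a set of size three, which translates into a small explicit system of equations over $\IF_q$; the task is to prove this system has a solution for all large $q$. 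This existence can be established either by a counting/character-sum estimate or by an explicit parametrized construction, and it is exactly here that the finite-field structure does the real work. The resulting bounded-support element is a short even permutation, so combined with primitivity it already delivers $\Alt(\IF_q)$.

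It remains to determine parities, which is where I expect most of the technical content to lie and which the general asymptotic results quoted earlier do not settle. I would compute $\operatorname{sgn}$ for explicit maps: the inversion $x \mapsto -x$ (an orthomorphism for odd $p$), the scalar multiplications $x \mapsto cx$, the translations $x \mapsto x + a$ (complete mappings for odd $p$), and the switch element itself, expressing each sign through its cycle type on $\IF_q^{*}$ or on $\IF_q$. Exhibiting, for every large $q$, at least one odd complete mapping and one odd orthomorphism then upgrades $\Alt(\IF_q)$ to $\Sym(\IF_q)$. The five exceptional values $q \in \{2,3,4,5,8\}$ are precisely those for which primitivity, the switching construction, or the parity argument degenerates; for these I would enumerate all complete mappings and orthomorphisms directly and compute the generated permutation groups, recording the outcomes in Table \ref{cTable}.
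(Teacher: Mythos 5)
Your route is genuinely different from the paper's, which obtains $\AGL_d(p)\leq P_{\comp}(\IF_q)\cap P_{\orth}(\IF_q)$ from the result of \cite{BW22a} that every element of $\GL_d(p)$ is a product of two \emph{linear} complete mappings, and then replaces your Jordan-type theorem by Stafford's classification \cite{Sta98a} of the groups between $\AGL_d(p)$ and $\Sym(\IF_q)$; but your proposal has two genuine gaps. The first is the switching step. Suppose an orthomorphism $g$ agrees with $x\mapsto cx$ ($c\neq 0,1$) outside $S=\{a_1,a_2,a_3\}$; then $g(S)=cS$, and after relabelling $g(a_1)=ca_2$, $g(a_2)=ca_3$, $g(a_3)=ca_1$. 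Bijectivity of $x\mapsto g(x)-x$ forces $\{ca_2-a_1,\ ca_3-a_2,\ ca_1-a_3\}=\{(c-1)a_1,(c-1)a_2,(c-1)a_3\}$, and since $ca_j-a_i=(c-1)a_i$ or $ca_j-a_i=(c-1)a_j$ would force $a_i=a_j$, the matching is forced: $ca_2-a_1=(c-1)a_3$, $ca_3-a_2=(c-1)a_1$, $ca_1-a_3=(c-1)a_2$. Eliminating $a_1$ gives $(a_2-a_3)(c^2-c+1)=0$, so a size-$3$ switch of a linear orthomorphism exists if and only if $x^2-x+1$ has a root in $\IF_q$, i.e.\ if and only if $q\equiv 0,1\pmod{3}$. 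For the infinitely many prime powers with $q\equiv 2\pmod{3}$ (all primes $\equiv 2\pmod{3}$ and all $2^d$ with $d$ odd) the system is unsolvable for \emph{every} $c$, no matter how large $q$ is, so no counting or character-sum estimate can rescue it. The idea can be repaired with larger switch sets (for instance, in characteristic $2$ a switch of pattern $(a_1,a_2)(a_3,a_4)$ works for every $c\neq 0,1$), but that is exactly the case analysis your proposal treats as routine, and the same computation for complete mappings runs into the same obstruction via $c^2+c+1$.

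The second and more serious gap is the parity step in characteristic $2$. Every permutation you propose to take the sign of is an even permutation of $\IF_{2^d}$ for $d\geq 3$: scalar maps have all cycles of odd length (the multiplicative order of $c$ divides $2^d-1$), translations consist of $2^{d-1}$ transpositions, inversion is the identity, and the switch element is a $3$-cycle or double transposition; in fact all of $\AGL_d(2)$ lies in $\Alt(\IF_{2^d})$. So nothing in your toolkit produces an odd complete mapping of $\IF_{2^d}$, which is precisely what is needed to upgrade $\Alt(\IF_{2^d})$ to $\Sym(\IF_{2^d})$, and this cannot be a routine matter: for $q=8$ there is \emph{no} odd complete mapping at all (all complete mappings of $\IF_8$ are $\IF_2$-affine), which is exactly why $8$ sits in the exceptional list. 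The paper fills this gap with computer-found odd complete mappings of $\IF_{16}$ and $\IF_{32}$, and for $q=2^d\geq 64$ with a Hall--Paige-type transfer argument (property (P) together with Lemma \ref{mainLem}, pushing an odd complete mapping of an order-$16$ subgroup up to the whole group). Some substitute for that machinery, or an explicit family of odd complete mappings in characteristic $2$, is missing from your proposal.
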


\begin{table}[h]
\begin{center}
\begin{tabular}{|c|c|c|}\hline
$q$ & $P_{\comp}(\IF_q)$ & $P_{\orth}(\IF_q)$ \\ \hline
2 & $\{\id\}$ & $\{\id\}$ \\ \hline
3 & $\Alt(\IF_3)$ & $\Sym(\IF_3)$ \\ \hline
4 & $\Alt(\IF_4)$ & $\Alt(\IF_4)$ \\ \hline
5 & $\AGL_1(5)$ & $\AGL_1(5)$ \\ \hline
8 & $\AGL_3(2)$ & $\AGL_3(2)$ \\ \hline
\end{tabular}
\caption{Compositions of complete mappings and of orthomorphisms of finite fields}
\label{cTable}
\end{center}
\end{table}

The proof of Theorem \ref{mainTheo1} is comparatively easy if one uses a recent result of the authors, \cite[Proposition 3.1]{BW22a}, and readers who are solely interested in Theorem \ref{mainTheo1} only need to read up to its proof, which is found after that of Lemma \ref{mainLem2} in Section \ref{sec2}. In the remainder of this Introduction, we discuss our results for general finite groups.

Beside \cite[Theorem]{PS80a}, our aforementioned Theorem \ref{mainTheo2} is also based on a recent asymptotic formula for the number of complete mappings of a finite group $G$ satisfying the Hall-Paige condition obtained by Eberhard, Manners and Mrazovi{\'c} \cite[Theorem 1.2]{EMM22a}. More precisely, those three authors have shown that as $|G|\to\infty$, the number of complete mappings of $G$ is of the form
\begin{equation}\label{complAsymptoticsEq}
(\e^{-1/2}+o(1))\cdot|G/G'|\cdot\frac{(|G|!)^2}{|G|^{|G|}},
\end{equation}
where $G'=\langle g^{-1}h^{-1}gh: g,h\in G\rangle$ denotes the commutator subgroup of $G$. Moreover, they noted that their method could even be used to give an explicit lower bound on the number of complete mappings of $G$ if the corresponding details are worked out.

The precise formulation of our Theorem \ref{mainTheo2} is as follows:

\begin{theoremm}\label{mainTheo2}
Let $c$ be a constant with $0<c<\e^{-1/2}$, and let $G$ be a finite group with at least $c|G/G'|\frac{(|G|!)^2}{|G|^{|G|}}$ complete mappings. If $|G|\geq\max\{280816,11\log\left(c^{-1}\right)\}$, then $\Alt(G)\leq P_{\comp}(G)\cap P_{\orth}(G)$.
\end{theoremm}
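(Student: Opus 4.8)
The plan is to prove the two containments $\Alt(G)\le P_{\comp}(G)$ and $\Alt(G)\le P_{\orth}(G)$ separately; since $\Alt(G)\le A$ together with $\Alt(G)\le B$ is equivalent to $\Alt(G)\le A\cap B$, this gives the statement. For each of the two groups I would use the same two-step scheme: first show it is a \emph{primitive} subgroup of $\Sym(G)$, then show its order exceeds $4^{|G|}$; the theorem of Praeger and Saxl quoted above then forces it to be $\Alt(G)$ or $\Sym(G)$, in either case containing $\Alt(G)$. The size bound is immediate from the hypothesis: writing $n=|G|$, every complete mapping is an element of $P_{\comp}(G)$, so $|P_{\comp}(G)|\ge c\,|G/G'|\,(n!)^2/n^{n}$, and the bijection $f\mapsto f\circ\inv$ from complete mappings to orthomorphisms yields the same lower bound for $|P_{\orth}(G)|$.

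Transitivity and the shape of the possible block systems come for free (the hypothesis count is positive, so $G$ has at least one complete mapping). For $a\in G$ let $\rho_a\colon g\mapsto ga$ be right translation. If $f$ is a complete mapping then so is $\rho_a\circ f$, because $\widetilde{\rho_a\circ f}=\rho_a\circ\widetilde f$ is again a bijection; hence $\rho_a=(\rho_a\circ f)\circ f^{-1}\in P_{\comp}(G)$. The identical computation with $g^{-1}f(g)$ in place of $g\,f(g)$ shows $\rho_a\in P_{\orth}(G)$. Thus both groups contain the regular subgroup $\{\rho_a:a\in G\}$, so both are transitive, and any block system is invariant under all $\rho_a$. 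The block containing $1$ is then a subgroup $B\le G$, and the block system is the partition of $G$ into right cosets $\{Bg:g\in G\}$.

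It remains to rule out every proper nontrivial $B$, and this is the crux. I would split into cases by the index $m=[G:B]$, so the blocks have size $|B|=n/m$. For $m\ge 3$ I argue by counting: a permutation preserving a partition into $m$ blocks of size $n/m$ lies in $\Sym(n/m)\wr\Sym(m)$, so at most $m!\,((n/m)!)^{m}$ complete mappings can preserve $\{Bg\}$; a Stirling estimate gives $c\,(n!)^2/n^{n}>m!\,((n/m)!)^{m}$ for all $m\ge 3$ once $n$ is large (by convexity the tightest case is $m=3$), and no normality of $B$ is needed. Hence not every complete mapping preserves $\{Bg\}$, so this is no block system. This argument breaks down exactly at $m=2$, where $\Sym(n/2)\wr\Sym(2)$ is too large; there I would instead use Hall--Paige. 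If $[G:B]=2$ then $B\trianglelefteq G$ and $G/B\cong\IZ_2$; one checks that a complete mapping $f$ preserving $\{B,\,G\setminus B\}$ forces $\widetilde f$ to preserve the same partition, so its induced map on $G/B$ is a permutation, i.e.\ the induced $\overline f$ is a complete mapping of $G/B$. But $\IZ_2$ has a nontrivial cyclic Sylow $2$-subgroup, so by Hall--Paige it has no complete mapping --- a contradiction. The same two cases, verbatim, handle $P_{\orth}(G)$, using that $\IZ_2$ has no orthomorphism either. This establishes primitivity of both groups.

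Finally, the same bookkeeping gives $c\,(n!)^2/n^{n}>4^{n}$, so both groups are primitive of order exceeding $4^{n}$ and Praeger--Saxl applies, yielding $\Alt(G)\le P_{\comp}(G)\cap P_{\orth}(G)$. Tracking constants in the two inequalities $c\,(n!)^2/n^{n}>m!\,((n/m)!)^{m}$ (at $m=3$) and $c\,(n!)^2/n^{n}>4^{n}$ is what produces the explicit thresholds: the hypothesis $n\ge 11\log(c^{-1})$ absorbs the dependence on $c$, while $n\ge 280816$ makes $\log n$ large enough for both estimates to hold with room to spare. I expect the main obstacle to be precisely this small-index regime: a naive counting bound is \emph{false} at index $2$, and it is the Hall--Paige obstruction on the quotient $G/B$ that rescues that case, so the argument genuinely needs both the abundance of complete mappings and the nonexistence result for $\IZ_2$.
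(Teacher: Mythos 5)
Your proposal is correct and follows essentially the same route as the paper's proof: the regular subgroup $\rho(G)$ inside $P_{\comp}(G)$ and $P_{\orth}(G)$ forces transitivity and forces any block system to consist of right cosets of a subgroup, the index-$2$ case is eliminated by inducing a complete mapping/orthomorphism on $G/B\cong\IZ/2\IZ$ (which has none), the index $\geq 3$ case is eliminated by Stirling-type estimates against $|\Sym(k)\wr\Sym(m)|=(k!)^m\cdot m!$, and the Praeger--Saxl theorem combined with the size hypothesis settles the primitive case. The differences are only organizational: the paper argues by a trichotomy-with-contradiction rather than proving primitivity first, and it arranges the wreath-product estimates by block size ($k\leq n^{1/3}$ versus $k>n^{1/3}$, with subcases $m\in\{3,4,5\}$ and $m\geq 6$) instead of your single appeal to $m=3$ being the extremal index.
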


The following is clear from Theorem \ref{mainTheo2} and \cite[Theorem 1.2]{EMM22a}:

\begin{corrollary}\label{mainCor}
Let $G$ be a sufficiently large finite group that satisfies the Hall-Paige condition. Then $\Alt(G)\leq P_{\comp}(G)\cap P_{\orth}(G)$.
\end{corrollary}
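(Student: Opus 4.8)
The plan is to derive the corollary by feeding the asymptotic count of Eberhard--Manners--Mrazovi\'c into the explicit threshold of Theorem \ref{mainTheo2}. The only point requiring care is the uniform interpretation of the error term $o(1)$ appearing in formula (\ref{complAsymptoticsEq}); everything else is a direct substitution.

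First I would fix once and for all a constant $c$ with $0<c<\e^{-1/2}$, for concreteness $c:=\e^{-1/2}/2$. Since $G$ satisfies the Hall--Paige condition, it has at least one complete mapping, and \cite[Theorem 1.2]{EMM22a} asserts that the number of its complete mappings equals $(\e^{-1/2}+o(1))\cdot|G/G'|\cdot\frac{(|G|!)^2}{|G|^{|G|}}$, where the $o(1)$ is a function of $|G|$ alone, tending to $0$ as $|G|\to\infty$ over all finite groups satisfying the Hall--Paige condition. Putting $\varepsilon:=\e^{-1/2}-c>0$, there is consequently a threshold $N_0=N_0(c)$ such that for every such group $G$ with $|G|\geq N_0$ the $o(1)$ term exceeds $-\varepsilon$, and hence $G$ possesses at least $(\e^{-1/2}-\varepsilon)\,|G/G'|\frac{(|G|!)^2}{|G|^{|G|}}=c\,|G/G'|\frac{(|G|!)^2}{|G|^{|G|}}$ complete mappings.

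Next I would take the implicit meaning of \enquote{sufficiently large} to be $N:=\max\{N_0,280816,11\log(c^{-1})\}$, which is a fixed finite number now that $c$ has been fixed (indeed, with the above choice $11\log(c^{-1})$ is a small constant, so $N=\max\{N_0,280816\}$ in practice). For any finite group $G$ satisfying the Hall--Paige condition with $|G|\geq N$, both hypotheses of Theorem \ref{mainTheo2} are then met: by the preceding paragraph $G$ has at least $c\,|G/G'|\frac{(|G|!)^2}{|G|^{|G|}}$ complete mappings, and $|G|\geq\max\{280816,11\log(c^{-1})\}$. Theorem \ref{mainTheo2} therefore delivers $\Alt(G)\leq P_{\comp}(G)\cap P_{\orth}(G)$, as claimed.

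There is no genuine obstacle: all the content is packed into Theorem \ref{mainTheo2} and the EMM asymptotics, both of which may be quoted. The single thing I would verify explicitly is that the $o(1)$ in (\ref{complAsymptoticsEq}) is uniform over all groups of a given order, i.e.\ that it is bounded in terms of $|G|$ alone and not of finer invariants of $G$, so that one threshold $N_0$ works simultaneously for every admissible $G$. This is precisely how \cite[Theorem 1.2]{EMM22a} is phrased, so the deduction goes through without further work.
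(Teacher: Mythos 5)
Your proposal is correct and is exactly the paper's argument: the paper simply states that the corollary is ``clear from Theorem \ref{mainTheo2} and \cite[Theorem 1.2]{EMM22a}'', and your write-up fills in precisely that deduction (fix a constant $c<\e^{-1/2}$, use the asymptotic to secure the hypothesis on the number of complete mappings for all sufficiently large $|G|$, then invoke Theorem \ref{mainTheo2}). Your explicit attention to the uniformity of the $o(1)$ term and to folding the EMM threshold into the final ``sufficiently large'' bound is exactly the bookkeeping the paper leaves implicit.
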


Since $\Alt(G)$ consists precisely of the even permutations in $\Sym(G)$, Corollary \ref{mainCor} implies that there is an absolute constant $N_0$ (which could be worked out from Eberhard, Manner and Mrazovi{\'c}'s methods) such that for all finite groups $G$ with $|G|\geq N_0$ that satisfy the Hall-Paige condition,
\[
P_{\comp}(G)=\begin{cases}\Sym(G), & \text{if }G\text{ has an odd complete mapping}, \\ \Alt(G), & \text{otherwise},\end{cases}
\]
and
\[
P_{\orth}(G)=\begin{cases}\Sym(G), & \text{if }G\text{ has an odd orthomorphism}, \\ \Alt(G), & \text{otherwise}.\end{cases}
\]
This motivates a closer study of the parities of complete mappings and orthomorphisms. A remarkable recent result in this context is M{\"u}yesser and Pokrovskiy's \cite[Theorem 6.9]{MP22a}, which states that every large enough finite group $G$ that satisfies the Hall-Paige condition and is not an elementary abelian $2$-group is \emph{harmonious} in the sense that it has a \emph{harmonious ordering} (as defined in \cite[Subsubsection 1.1.3]{MP22a}): a repetition-free ordered list $(g_1,g_2,\ldots,g_{|G|})$ of the elements of $G$ such that the $|G|$ group elements $g_ig_{i+1}$ for $1\leq i\leq |G|-1$ and $g_ng_1$ are pairwise distinct. But this is equivalent to the permutation of $G$ that consists of the single $|G|$-cycle $(g_1,g_2,\ldots,g_{|G|})$ being a complete mapping of $G$. In particular, if, additionally, $G$ is of even order, then $G$ has an odd complete mapping.

Another useful observation is that the following condition on a finite group $G$, which we will henceforth refer to as \emph{property (P)} (the \enquote{P} stands for \enquote{parity}), is well-suited for inductive purposes:
\begin{equation}\label{condPEq}
G\text{ admits both even and odd complete mappings.}
\end{equation}
We note that $G$ satisfies property (P) if and only if $G$ admits orthomorphisms of both parities: Indeed, if $\inv:g\mapsto g^{-1}$ denotes the inversion function of $G$, and if $f_1$ and $f_2$ are complete mappings of $G$ of different parities, then $f_1\circ\inv$ and $f_2\circ\inv$ are orthomorphisms of $G$ of different parities.

The following lemma, which can be obtained by studying Hall and Paige's methods of constructing complete mappings from \cite[Theorem 1]{HP55a}, is a powerful tool for inductive proofs involving property (P):

\begin{lemmma}\label{mainLem}
Let $G$ be a finite group and $N$ a normal subgroup of $G$. Assume that the following hold:
\begin{enumerate}
\item $N$ satisfies property (P).
\item $G/N$ admits a complete mapping.
\end{enumerate}
Then $G$ satisfies property (P).
\end{lemmma}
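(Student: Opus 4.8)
The plan is to make Hall and Paige's lifting construction from \cite[Theorem 1]{HP55a} explicit and then read a parity statement off it. First I would fix a transversal $t_1,\dots,t_m$ of $N$ in $G$, where $m=[G:N]$, so that every element of $G$ is uniquely of the form $nt_i$ with $n\in N$, and fix a complete mapping $\theta$ of $G/N$, which exists by hypothesis (2). Writing $\theta(Nt_i)=Nt_{\sigma(i)}$ and $Nt_i\cdot\theta(Nt_i)=Nt_{\tau(i)}$ defines two permutations $\sigma,\tau$ of $\{1,\dots,m\}$ together with elements $w_i\in N$ given by $t_it_{\sigma(i)}=w_it_{\tau(i)}$; here $\tau$ is a permutation precisely because $\theta$ is a complete mapping. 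For each index $i$ I would choose a complete mapping $\psi_i$ of $N$ (available since $N$ satisfies property (P) and hence has complete mappings) and set
\[
f(nt_i)=\bigl(t_i^{-1}\psi_i(n)w_i^{-1}t_i\bigr)\,t_{\sigma(i)}.
\]

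The first task is to check that $f$ is a complete mapping of $G$. Normality of $N$ is used throughout: it forces each conjugate $t_i^{-1}\psi_i(n)w_i^{-1}t_i$ into $N$, so that $f$ maps the coset $Nt_i$ bijectively onto $Nt_{\sigma(i)}$ and is a permutation of $G$ (as $\sigma$ is a permutation); and a short computation using $t_it_{\sigma(i)}=w_it_{\tau(i)}$ gives $\tilde{f}(nt_i)=n\psi_i(n)\,t_{\tau(i)}$, so that the restriction of $\tilde{f}$ to $Nt_i$ is bijective exactly because $n\mapsto n\psi_i(n)$ is, i.e.\ because $\psi_i$ is a complete mapping of $N$. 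Since $\tau$ is a permutation, $\tilde{f}$ is bijective as well, so $f$ is a complete mapping.

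The decisive observation is that the parity of $f$ can be flipped by altering a single $\psi_i$. Fix one index $i_0$ and use property (P) of $N$ to pick complete mappings $\psi_{i_0}^{\mathrm{ev}}$ and $\psi_{i_0}^{\mathrm{od}}$ of $N$ that are even and odd respectively, keeping $\theta$ and all $\psi_i$ with $i\neq i_0$ fixed; this yields complete mappings $f^{\mathrm{ev}}$ and $f^{\mathrm{od}}$ of $G$. They agree on every coset except $Nt_{i_0}$, so $\rho:=(f^{\mathrm{ev}})^{-1}\circ f^{\mathrm{od}}$ fixes every element outside $Nt_{i_0}$ and, under the identification $n\leftrightarrow nt_{i_0}$, acts on $Nt_{i_0}$ as $(\psi_{i_0}^{\mathrm{ev}})^{-1}\circ\psi_{i_0}^{\mathrm{od}}$ — the conjugations and the factor $w_{i_0}$ cancel in this comparison, which is why only the two chosen complete mappings of $N$ survive. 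Hence $\operatorname{sgn}(\rho)=\operatorname{sgn}(\psi_{i_0}^{\mathrm{ev}})\operatorname{sgn}(\psi_{i_0}^{\mathrm{od}})=-1$, and since $f^{\mathrm{od}}=f^{\mathrm{ev}}\circ\rho$ we get $\operatorname{sgn}(f^{\mathrm{od}})=-\operatorname{sgn}(f^{\mathrm{ev}})$, so these two complete mappings of $G$ have opposite parities and $G$ satisfies property (P).

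I expect the main obstacle to lie in the first task — correctly pinning down the representatives $w_i$ and verifying that both $f$ and $\tilde{f}$ are genuine permutations — rather than in the parity count, which becomes transparent once one sees that switching $\psi_{i_0}$ changes $f$ within a single coset only. The one place where a sign could be miscounted is the claim that the $w_i$-factors and the conjugations drop out of the relative-parity computation; this holds because $f^{\mathrm{ev}}$ and $f^{\mathrm{od}}$ carry identical copies of them, but it should be verified carefully.
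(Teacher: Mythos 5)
Your proof is correct, and its engine is the same one the paper uses: lift a complete mapping of $G/N$ to $G$ coset by coset in Hall--Paige fashion, with complete mappings of $N$ as the coset-wise ingredient, and flip the parity of the lift by exchanging that ingredient on a single coset. The packaging, however, is genuinely different. The paper does not re-derive the lift: it proves this lemma as a short reduction to its Lemma \ref{mainLemVar}, which concerns a subgroup $H$ that need not be normal (one assumes a simultaneous left and right transversal together with permutations $S,T$ fixing $1$ such that $u_iu_{S(i)}H=u_{T(i)}H$) and which quotes Hall and Paige's formula as a black box. That reduction first normalizes the quotient complete mapping $g$ so that $g(1_{G/N})=1_{G/N}$; the lifted map then stabilizes $N$ setwise, its restriction to $N$ is a complete mapping of $N$, and the parity flip consists of literally replacing this restriction by a complete mapping of $N$ of the other parity. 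You avoid both the citation and the normalization: normality of $N$ keeps your conjugates $t_i^{-1}\psi_i(n)w_i^{-1}t_i$ inside $N$, so the lift can be written down and verified directly, and your relative-sign computation --- $(f^{\mathrm{ev}})^{-1}\circ f^{\mathrm{od}}$ is supported on $Nt_{i_0}$, where it acts as $(\psi_{i_0}^{\mathrm{ev}})^{-1}\circ\psi_{i_0}^{\mathrm{od}}$ because the conjugations and the $w_{i_0}$-factor cancel --- correctly shows that the two lifts have opposite signs, without the lift having to stabilize any coset. Your route buys a self-contained proof of the lemma as stated; the paper's detour buys the extra generality of Lemma \ref{mainLemVar} for non-normal subgroups, which it genuinely needs later (Proposition \ref{solvableProp} and the $2$-group induction in Section \ref{sec4}).
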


Apart from being used in the proof of Theorem \ref{mainTheo1}, Lemma \ref{mainLem} (and a generalization of it which we will formulate as Lemma \ref{mainLemVar}) can be used to infer the following:

\begin{theoremm}\label{mainTheo4}
Let $G$ be a finite group, and assume that at least one of the following holds:
\begin{enumerate}
\item $G$ is of odd order and $|G|>3$.
\item $G$ is a noncyclic $2$-group and $|G|>8$.
\item $G$ is solvable, satisfies the Hall-Paige condition, and $|G|>24$.
\end{enumerate}
Then $G$ satisfies property (P). In particular:
\begin{enumerate}\setcounter{enumi}{3}
\item If $G$ is a large enough finite group that satisfies the Hall-Paige condition, then $P_{\comp}(G)=\Sym(G)$.
\item If $G$ is a large enough finite \emph{solvable} group that satisfies the Hall-Paige condition, then $P_{\orth}(G)=\Sym(G)$.
\end{enumerate}
\end{theoremm}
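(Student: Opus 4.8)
The plan is to split the five assertions into the three property-(P) statements (1)--(3), which carry the real content, and the two ``In particular'' consequences (4)--(5), which I would derive quickly from (1)--(3) and the machinery already assembled.

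I would dispatch (4) and (5) first. For (5), part (3) says that a sufficiently large solvable group $G$ satisfying the Hall-Paige condition has property (P); by the equivalence recorded just before Lemma \ref{mainLem}, such a $G$ then admits an odd orthomorphism, and since Corollary \ref{mainCor} gives $\Alt(G)\le P_{\orth}(G)$, this extra odd element forces $P_{\orth}(G)=\Sym(G)$. For (4), Corollary \ref{mainCor} likewise gives $\Alt(G)\le P_{\comp}(G)$, so it suffices to produce one odd complete mapping of each large $G$ satisfying the Hall-Paige condition, and here I would split on $|G|$: if $|G|$ is odd, part (1) already yields property (P), hence an odd complete mapping; if $|G|$ is even and $G$ is not elementary abelian, the harmonious ordering of M\"uyesser-Pokrovskiy \cite[Theorem 6.9]{MP22a} is an $|G|$-cycle that is a complete mapping, and an $|G|$-cycle of even length is an odd permutation; and if $G\cong\IF_{2^n}$ is elementary abelian, Theorem \ref{mainTheo1} gives $P_{\comp}(G)=\Sym(G)$ outright for $2^n\ge16$. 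The restriction of (5) to solvable groups is explained by this split: M\"uyesser-Pokrovskiy furnishes an odd complete mapping but not obviously an odd orthomorphism, so for non-solvable $G$ we cannot conclude $P_{\orth}(G)=\Sym(G)$, whereas (3) supplies full property (P) in the solvable case.

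The core is therefore to prove property (P) in cases (1)--(3), which I would do by induction on $|G|$, the engine being Lemma \ref{mainLem} together with its generalization Lemma \ref{mainLemVar}. The guiding principle is that property (P) propagates upward through normal subgroups: to put (P) on $G$ it is enough to exhibit a normal subgroup $N$ carrying (P) whose quotient $G/N$ admits a complete mapping, and Lemma \ref{mainLemVar} extends this to configurations in which $N$ is itself too small to carry (P). The seeds of the induction are the elementary abelian groups: Theorem \ref{mainTheo1} gives $P_{\comp}(\IF_q)=\Sym(\IF_q)$ for every prime power $q\notin\{2,3,4,5,8\}$, so the additive group of such an $\IF_q$ has property (P) -- an odd complete mapping coexisting with the even mapping $\id$ -- and the single value $q=5$ is handled by hand, since multiplication by a primitive root is an odd complete mapping of $\IF_5$ while $\id$ is even; thus $\IF_q$ has property (P) for every prime power $q\notin\{2,3,4,8\}$. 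For (1) the relevant seeds are the odd $q>3$, all of which qualify; for (2) they are the $\IF_{2^n}$ with $n\ge4$; and (3) is then built from (1) and (2).

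The main obstacle lies in the inductive step, when one passes to a minimal normal subgroup $N$ (elementary abelian, by solvability), where two things can go wrong. First, $N$ may be too small to carry property (P) on its own -- the culprits are $C_2$, which has no complete mapping at all, and $C_3\cong\IF_3$, whose complete mappings are all even -- so that Lemma \ref{mainLem} is inapplicable with this $N$ and one must fall back on Lemma \ref{mainLemVar} or on a different normal subgroup. Second, and more seriously (this is already the crux of Hall and Paige's original argument), the quotient $G/N$ need not inherit the Hall-Paige condition: its Sylow $2$-subgroup can be cyclic even when that of $G$ is not, so $G/N$ may admit no complete mapping and both lemmas break down. Overcoming this forces a careful, case-dependent choice of the subgroup along which to induct, so that noncyclicity of the Sylow $2$-subgroup survives in the quotient. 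For $G$ of odd order the second obstacle is vacuous, since every section again has odd order and so satisfies the Hall-Paige condition automatically -- this is why (1) enjoys the sharpest threshold and why I would treat it first; for $2$-groups the obstacle is live but confined to the $2$-group world, giving (2); and the general solvable case (3), which must invoke both (1) and (2), is attacked last. The thresholds $|G|>3$, $|G|>8$, $|G|>24$ themselves then emerge from a finite base-case analysis -- the order-$16$ noncyclic $2$-groups for (2), and the solvable Hall-Paige groups up to order $24$ for (3) -- each verified to satisfy property (P) by explicit construction or computer search, with the excluded fields $\IF_3,\IF_4,\IF_8$ (whose complete mappings are all even, as Table \ref{cTable} records) showing that none of these bounds can be lowered.
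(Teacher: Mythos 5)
Your derivations of (4) and (5) match the paper exactly (the split into odd order / elementary abelian / harmonious via \cite[Theorem 6.9]{MP22a}, respectively property (P) for solvable $G$ plus Corollary \ref{mainCor}), and the engine you name for (1)--(3) -- induction on $|G|$ driven by Lemmas \ref{mainLem} and \ref{mainLemVar} -- is also the paper's. But the core of the plan has genuine gaps, the most serious in case (2). Your picture that the seeds $\IF_{2^n}$, $n\geq4$, plus the order-$16$ groups feed an induction reaching all noncyclic $2$-groups is false: the noncyclic $2$-groups with a cyclic subgroup $\langle x\rangle$ of index $2$, i.e.\ the five infinite families $\AC_{2^n}=C_{2^{n-1}}\times C_2$, $\D_{2^n}$, $\Q_{2^n}$, $\SD_{2^n}$, $\M_{2^n}$, cannot be reduced at all. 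Indeed, if $N\unlhd G$ is proper and $N\not\leq\langle x\rangle$, then $G/N\cong\langle x\rangle/(N\cap\langle x\rangle)$ is a nontrivial cyclic $2$-group and so admits no complete mapping, while if $N\leq\langle x\rangle$ then $N$ is cyclic and fails property (P); and Hall--Paige's index-$4$ transversal argument behind Lemma \ref{mainLemVar} requires precisely that the group have \emph{no} cyclic subgroup of index $2$. Note also that Lemma \ref{mainLemVar} does not, as you assert, rescue configurations where the normal subgroup is \enquote{too small}: its \enquote{in particular} still needs the subgroup $H$ itself to carry property (P); its only added generality is that $H$ need not be normal. Consequently these five arbitrarily large families are irreducible base cases demanding explicit constructions (harmoniousness results from the literature, parity analyses of Hall and Paige's explicit complete mappings, inversion counts) -- this is Proposition \ref{specialCasesProp} together with Appendices A--D, the technical bulk of the paper, which your plan omits entirely. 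The induction base also needs order $32$, not just $16$ (Proposition \ref{orders16and32Prop}), where the two groups $\SmallGroup(32,2)$ and $\SmallGroup(32,6)$ again require ad hoc transversal arguments via Lemma \ref{mainLemVar}.

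Two further gaps. For (3), the paper does not run a normal-subgroup induction with base cases up to order $24$, and no computer verification of such groups occurs; the key idea, which your plan never states, is the factorization $G=S\cdot H=H\cdot S$ of a finite solvable group into a Sylow $2$-subgroup $S$ and a Hall $2'$-complement $H$ (Hall's theorem), so that each factor is simultaneously a left and a right transversal of the other and Lemma \ref{mainLemVar} applies (Proposition \ref{solvableProp}): if both factors admit complete mappings and one satisfies property (P), so does $G$. The threshold is pure arithmetic: $|G|>24$ forces $|S|>8$ or $|H|>3$, whereupon (2) or (1) supplies property (P) for a factor. For (1), your seed set misses $\IZ/9\IZ$: an odd-order $3$-group need not contain $(\IZ/3\IZ)^2$, and for $G=\IZ/9\IZ$ the only proper nontrivial subgroup is $C_3$, whose complete mappings are all even, so neither lemma can apply and one must exhibit an odd complete mapping of $\IZ/9\IZ$ explicitly (the paper gives the computer-found permutation $(\overline{0},\overline{1},\overline{3},\overline{6},\overline{8})(\overline{2},\overline{4},\overline{7},\overline{5})$); with $(\IZ/3\IZ)^2$ and $\IZ/9\IZ$ in hand, $3$-groups reduce to normal subgroups of order $9$, and nonabelian odd-order groups are then handled through the commutator subgroup $G'$ rather than minimal normal subgroups.
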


The proof of Theorem \ref{mainTheo4} requires us to verify property (P) for certain special cases, which leads to results of independent interest, such as that groups of the form
\[
\langle x,y: x^{2^{n-1}}=y^2=1, y^{-1}xy=x^{2^{n-2}+1}\rangle
\]
are harmonious. As an application of Theorem \ref{mainTheo4}, we will determine the possible values of two kinds of parity types (in the sense of \cite[Introduction]{FHW18a} and \cite[Definition 1]{Kot12a} respectively) of orthogonal Latin squares based on complete mappings of finite groups. Some open questions raised by Theorem \ref{mainTheo4} are discussed at the end of the paper.

\subsection{Overview of the paper}\label{subsec1P2}

Section \ref{sec2} is basically dedicated to Lemma \ref{mainLem} and applications of it. We will, however, actually prove a more general version of Lemma \ref{mainLem}, formulated as Lemma \ref{mainLemVar}; this stronger version of the lemma will be needed in Section \ref{sec4}. The remainder of Section \ref{sec2} consists of proofs of Theorems \ref{mainTheo1} and \ref{mainTheo4}(1), both of which make use of Lemma \ref{mainLem}.

In Section \ref{sec3}, we prove Theorem \ref{mainTheo2} and infer Theorem \ref{mainTheo4}(4) from it and the results of Section \ref{sec2}.

Section \ref{sec4} deals with the proof of Theorem \ref{mainTheo4}(2,3,5). First, we show that statements (3) and (5) of Theorem \ref{mainTheo4} are simple consequences of statements (1), which was already proved in Section \ref{sec2}, and (2). This observation uses Lemma \ref{mainLemVar}. Therefore, in the remainder of Section \ref{sec4}, we will be concerned with proving Theorem \ref{mainTheo4}(2), which requires an analysis of the parities of complete mappings of finite $2$-groups. As this, too, uses Lemma \ref{mainLemVar}, it is not surprising that the proof proceeds by induction on the group order, and the induction base is covered by two key auxiliary results, Propositions \ref{specialCasesProp} and \ref{orders16and32Prop}. These deal with (arbitarily large) $2$-groups of a particular form (that cannot be reduced to groups of smaller order by the general inductive approach), respectively with groups of orders $16$ and $32$. The proofs of these auxiliary results are technical at times, and the more technically demanding parts of the arguments are deferred to the four Appendices at the end of the paper for the sake of the reading flow.

Finally, Section \ref{sec5} provides some concluding remarks. More precisely, in Subsection \ref{subsec5P1} we take a closer look at the relationship of Theorem \ref{mainTheo1} with known results in cryptography. Subsection \ref{subsec5P2} discusses a connection between parities of complete mappings (and orthomorphisms) and those of orthogonal Latin squares as studied in combinatorics (see \cite{FHW18a} and \cite{Kot12a}, for instance). Finally, Subsection \ref{subsec5P3} concludes the paper with a discussion of some related open problems for further research.

\section{Property (P) and applications}\label{sec2}

In this section, we will prove Lemma \ref{mainLem} as well as Theorems \ref{mainTheo1} and \ref{mainTheo4}(1). As for Lemma \ref{mainLem}, we will actually prove the following technical result (needed in our discussion of solvable groups in Section \ref{sec4}), of which Lemma \ref{mainLem} is a simple consequence:

\begin{lemma}\label{mainLemVar}
Let $G$ be a group, $H$ a subgroup of $G$ of index $k$, and let $u_1,u_2,\ldots,u_k$ be elements of $G$ that form both a left and right transversal of $H$ in $G$, with $u_1\in H$. Moreover, assume that there exist permutations $S,T\in\Sym(k)=\Sym(\{1,2,\ldots,k\})$ such that $S(1)=T(1)=1$ and for each $i\in\{1,2,\ldots,k\}$, we have $u_iu_{S(i)}H=u_{T(i)}H$. If $H$ admits a complete mapping, then $G$ admits a complete mapping that stabilizes $H$. In particular, if, additionally, $H$ satisfies property (P), then so does $G$.
\end{lemma}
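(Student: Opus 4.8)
The plan is to prove this by an explicit Hall--Paige-style construction (cf.\ \cite{HP55a}), building the complete mapping of $G$ one left coset of $H$ at a time. Using the hypothesis $u_iu_{S(i)}H=u_{T(i)}H$, fix for each $i$ the element $\gamma_i\in H$ with $u_iu_{S(i)}=u_{T(i)}\gamma_i$. Since the $u_i$ form a left transversal, every $g\in G$ is uniquely $g=u_i\alpha$ with $\alpha\in H$, and I would define, for permutations $m_1,\dots,m_k\in\Sym(H)$ to be chosen, $f(u_i\alpha)=\alpha^{-1}u_{S(i)}m_i(\alpha)$. The virtue of this formula is that the associated map is transparent: $\tilde f(u_i\alpha)=u_i\alpha\cdot\alpha^{-1}u_{S(i)}m_i(\alpha)=u_iu_{S(i)}m_i(\alpha)=u_{T(i)}(\gamma_im_i(\alpha))$. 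For each fixed $i$, $\alpha\mapsto\gamma_im_i(\alpha)$ runs bijectively over $H$, so $\tilde f$ maps $u_iH$ bijectively onto $u_{T(i)}H$; as $T\in\Sym(k)$, these exhaust the cosets of $H$, and hence $\tilde f$ is a bijection of $G$ for \emph{every} choice of the $m_i$. All the difficulty is thereby concentrated in making $f$ itself bijective.

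Next I would isolate the block structure of $f$. The left coset of $f(u_i\alpha)$ is $\alpha^{-1}u_{S(i)}H$, which is independent of $m_i$; since $S(1)=1$ and $u_1\in H$, this coset equals $H$ precisely when $i=1$. Hence $f$ maps $H$ into $H$ and $G\setminus H$ into $G\setminus H$. On the block $H$ (the case $i=1$) one has $f(\alpha)=\alpha^{-1}u_1m_1(\alpha)$, and, since $\tilde f|_H$ is already a permutation of $H$ by the previous paragraph, $f|_H$ is a complete mapping of $H$ exactly when $m_1$ is chosen so that $f|_H$ is bijective; concretely, for any complete mapping $c$ of $H$ the choice $m_1(\alpha)=u_1^{-1}\alpha c(\alpha)\in\Sym(H)$ yields $f|_H=c$. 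It then remains to choose $m_2,\dots,m_k$ so that $f$ restricts to a bijection of $G\setminus H$.

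This last choice is the main obstacle. I would recast it as a system-of-distinct-representatives problem: the elements $f(u_i\alpha)$ with $i\ge 2$ must exactly fill the cosets $u_lH$ with $l\ne 1$, subject to each $m_i$ being a single permutation of $H$. The crucial quantitative input --- obtained from the common left/right transversal hypothesis together with the orbit--stabilizer relation for the left multiplication action of $H$ on $G/H$ --- is that, irrespective of the $m_i$, every target coset receives exactly $|H|$ of the values $f(u_i\alpha)$. This regularity is what lets me verify Hall's condition for the associated coset-weighted bipartite graph and select permutations $m_2,\dots,m_k$ realizing a bijection of $G\setminus H$; establishing this condition, using the relations $u_iu_{S(i)}H=u_{T(i)}H$ and the transversal properties, is the technically delicate heart of the argument. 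Together with the block structure, this produces a complete mapping $f$ of $G$ with $f(H)=H$, proving the first assertion.

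Finally, the \enquote{in particular} clause follows by a parity bookkeeping. As $H$ and $G\setminus H$ are both $f$-invariant, $f=f|_H\sqcup f|_{G\setminus H}$ and so $\operatorname{sgn}(f)=\operatorname{sgn}(f|_H)\cdot\operatorname{sgn}(f|_{G\setminus H})$. The construction lets me fix one admissible tuple $m_2,\dots,m_k$ --- hence fix $f|_{G\setminus H}$ and its sign --- while choosing $m_1$ independently so that $f|_H$ equals any prescribed complete mapping of $H$. If $H$ satisfies property (P) it admits complete mappings of both parities, so $\operatorname{sgn}(f|_H)$ attains both values while $\operatorname{sgn}(f|_{G\setminus H})$ stays fixed; thus $\operatorname{sgn}(f)$ attains both values, giving complete mappings of $G$ (each stabilizing $H$) of both parities, which is property (P) for $G$.
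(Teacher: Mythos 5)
Your frame is sound as far as it goes, and it is essentially the paper's: the ansatz $f(u_i\alpha)=\alpha^{-1}u_{S(i)}m_i(\alpha)$ is exactly the shape of the mapping $\Theta$ in \cite[proof of Theorem 1]{HP55a} that the paper cites, your observation that $\tilde f$ is automatically bijective once each $m_i$ is a permutation is correct, as is the block decomposition $f(H)=H$, $f(G\setminus H)=G\setminus H$; and your closing parity bookkeeping is the same argument the paper uses for the \enquote{in particular} clause. (Minor slip: with $m_1(\alpha)=u_1^{-1}\alpha c(\alpha)$ one gets $f(u_1\alpha)=c(\alpha)$, so $f|_H$ is $c$ precomposed with the fixed permutation $g\mapsto u_1^{-1}g$ of $H$, not $c$ itself; this is harmless for parity.) The genuine gap is the step you yourself defer as \enquote{the technically delicate heart}: the existence of permutations $m_2,\ldots,m_k$ of $H$ making $f$ injective on $G\setminus H$. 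You do not prove it, and the route you indicate cannot prove it. Your regularity statement (each coset $u_lH$, $l\neq1$, receives exactly $|H|$ candidate values) is correct, but it is pure orbit--stabilizer counting: it uses only the left-transversal property and the fact that $S$ permutes $\{2,\ldots,k\}$, so it holds whether or not $H$ admits a complete mapping --- a hypothesis that your plan for choosing $m_2,\ldots,m_k$ never invokes. No matching-theorem argument fed only by this counting can succeed, because the conclusion genuinely requires that hypothesis.

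Concretely, take $G=\IZ/6\IZ$, $H=\{0,3\}$, $u_1=0$, $u_2=1$, $u_3=2$, $S=\id$, $T=(2\,3)$. Then $u_1\in H$, the $u_i$ form a two-sided transversal, $u_i+u_{S(i)}+H=u_{T(i)}+H$ with $S(1)=T(1)=1$, and your regularity holds; only \enquote{$H$ admits a complete mapping} fails. Yet already for the single block $i=2$ one would need $m_2\in\Sym(H)$ with $1+m_2(0)\neq 4+m_2(3)$ in $\IZ/6\IZ$, i.e.\ $m_2(0)-m_2(3)\neq 3$, which is impossible since $\{m_2(0),m_2(3)\}=\{0,3\}$. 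More generally, when $H\unlhd G$ your $f$ maps each $u_iH$ ($i\geq2$) into the single coset $u_{S(i)}H$, and, writing $\phi_i$ for conjugation by $u_{S(i)}$, the two requirements ($m_i$ bijective, $f$ injective on $u_iH$) say precisely that $m_i\circ\phi_i^{-1}$ is an orthomorphism of $H$; so admissible $m_i$ exist if and only if $H$ has a complete mapping. The constraint that each $m_i$ be a single permutation of $H$ (needed for $\tilde f$) couples all target cosets simultaneously, so what you face is an orthogonality (Latin-square) condition, not a bipartite matching problem; resolving it is exactly the content of Hall and Paige's construction, which threads a complete mapping $\theta$ of $H$ through \emph{every} coset, not just through $H$. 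To repair the proof, either carry out that construction in full, or do as the paper does: cite \cite[Theorem 1 and formula (3)]{HP55a} and observe that the extra assumptions $u_1\in H$ and $S(1)=T(1)=1$ force their complete mapping $\Theta$ to stabilize $H$, after which your final parity paragraph applies verbatim.
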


\begin{proof}
The main statement follows from Hall and Paige's construction in the proof of \cite[Theorem 1]{HP55a}. Indeed, using their notation and our extra assumptions compared to their theorem, note that $u_{[1,p]}=u_1$ for all $p\in H$, whence the complete mapping $\Theta$ which they define in \cite[formula (3) on p.~542]{HP55a} stabilizes $H$.

For the \enquote{In particular}, let $f$ be a complete mapping of $G$ that stabilizes $H$. The restriction $f_{\mid H}$ is a complete mapping of $H$. Let $h$ be a complete mapping of $H$ of different parity than $f_{\mid H}$. Then the function
\[
f':G\rightarrow G, f'(x)=\begin{cases}h(x), & \text{if }x\in H, \\ f(x), & \text{if }x\notin H,\end{cases}
\]
is a complete mapping of $G$ of different parity than $f$.
\end{proof}

\begin{proof}[Proof of Lemma \ref{mainLem}]
Let $k:=|G:N|$, and let $u_1,u_2,\ldots,u_k$ be a transversal of $N$ in $G$ with $u_1=1_G\in N$. The $u_i$ are in bijection with the elements of $G/N$ via the canonical projection $\pi:G\rightarrow G/N$. Let $g$ be a complete mapping of $G/N$. Note that if $\rho:G/N\rightarrow\Sym(G/N)$ denotes the right-regular representation of $G/N$ on itself (so that $\rho(x)(x')=x'x$ for all $x,x'\in G/N$), then the function $\rho(x)\circ g$, mapping $a\in G/N$ to $g(a)x$, is also a complete mapping of $G/N$. We may thus assume without loss of generality that $g(1_{G/N})=1_{G/N}$ -- if not, just replace $g$ by $\rho(g(1_{G/N})^{-1})\circ g$. If we define $S,T:\{1,2,\ldots,k\}\rightarrow\{1,2,\ldots,k\}$ via $u_{S(i)}N=g(u_iN)$ and $u_{T(i)}N=u_iu_{S(i)}N$, then $S$ and $T$ are permutations in $\Sym(k)$ as in the conditions of Lemma \ref{mainLemVar}, whence the result follows by an application of that lemma.
\end{proof}

In order to derive Theorem \ref{mainTheo1}(1) from Lemma \ref{mainLem}, we first derive the following auxiliary result:

\begin{lemma}\label{mainLem2}
Let $q=p^d$ be a prime power. If $q>4$, then $\AGL_d(p)$ is a subgroup of $P_{\comp}(\IF_p^d)\cap P_{\orth}(\IF_p^d)$.
\end{lemma}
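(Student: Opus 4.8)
The plan is to prove the cleaner and stronger statement that the affine permutations of $\IF_p^d$ which happen to be complete mappings already generate all of $\AGL_d(p)$, and likewise for orthomorphisms. Since every such permutation lies in the group $\AGL_d(p)$, the subgroup of $P_{\comp}(\IF_p^d)$ they generate is automatically contained in $\AGL_d(p)$, so only the reverse inclusion is at issue. Writing an affine permutation as $f_{A,b}\colon x\mapsto Ax+b$ with $A\in\GL_d(p)$ and $b\in\IF_p^d$, one computes $\widetilde{f_{A,b}}(x)=(I+A)x+b$, so $f_{A,b}$ is a complete mapping exactly when $I+A\in\GL_d(p)$, and it is an orthomorphism exactly when $A-I\in\GL_d(p)$. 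Put $\mathcal G=\{A\in\GL_d(p):I+A\in\GL_d(p)\}$. I would first extract the translations: fixing any single $A\in\mathcal G$ (such $A$ exist, e.g.\ multiplication by $\alpha\in\IF_q\setminus\{0,-1\}$ after identifying $\IF_p^d$ with $\IF_{p^d}$), the composite $f_{A,b}\circ f_{A,b'}^{-1}$ is translation by $b-b'$, so varying $b,b'$ produces the whole translation subgroup $\IF_p^d$. Hence the $f_{A,b}$ generate $\IF_p^d\rtimes\langle\mathcal G\rangle$, and everything reduces to the single assertion that $\langle\mathcal G\rangle=\GL_d(p)$ whenever $q>4$.

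The guiding structural observation is that $\mathcal G$ is closed under conjugation, since admissibility of $A$ depends only on its characteristic polynomial being coprime to $x(x+1)$; therefore $\langle\mathcal G\rangle$ is a normal subgroup of $\GL_d(p)$. For odd $p$ I would argue explicitly: every transvection $I+\lambda E_{i,j}$ lies in $\mathcal G$ (it is unipotent and $\det(2I+\lambda E_{i,j})=2^d\neq0$), and transvections generate $\SL_d(p)$; one then adjoins a single admissible matrix whose determinant generates $\IF_p^*$ — a diagonal matrix $\operatorname{diag}(\mu,1,\dots,1)$ with $\mu$ a primitive root $\neq-1$ when $p\geq5$, and for $p=3$ an explicit matrix with irreducible quadratic characteristic polynomial of determinant $-1$ — to recover all of $\GL_d(p)$. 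The case $d=1$ is handled directly, as $\IF_p^*$ is cyclic of order $p-1\geq4$ and so has a generator other than $-1$.

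The genuine obstacle is the characteristic-two case $q=2^d$ with $d\geq3$: here $-1=1$, the condition becomes ``$A$ and $A+I$ invertible,'' and transvections (having eigenvalue $1$) are no longer admissible, so the elementary-matrix strategy collapses. I would instead exploit simplicity: $\GL_d(2)=\SL_d(2)$ is simple for $d\geq3$, whereas $\mathcal G$ is a nonempty (a Singer cycle, whose characteristic polynomial is primitive of degree $d\geq2$, is admissible), conjugation-invariant set of non-identity matrices; hence $\langle\mathcal G\rangle$ is a nontrivial normal subgroup and must equal $\GL_d(2)$. This also explains the sharpness of the hypothesis: for $q\in\{3,4\}$ the admissible matrices lie in a proper normal subgroup (only the identity is admissible when $q=3$, and the admissible elements form the order-$3$ subgroup of $\GL_2(2)\cong\Sym(3)$ when $q=4$), which is exactly why these values are excluded.

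Finally, the orthomorphism inclusion $\AGL_d(p)\leq P_{\orth}(\IF_p^d)$ follows by the same scheme applied to $\{A:A-I\in\GL_d(p)\}$. Equivalently, since $f\mapsto f\circ\inv$ carries complete mappings to orthomorphisms and $\inv=f_{-I,0}\in\AGL_d(p)$, one only needs that the negatives of the admissible linear parts again generate $\GL_d(p)$; as $(-A)(-B)=AB$ and $I\in\langle\mathcal G\rangle$, this reduces to the identical computation. The crux throughout is the generation statement $\langle\mathcal G\rangle=\GL_d(p)$, and within it the characteristic-two case is where a genuinely different idea is required.
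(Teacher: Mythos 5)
Your proposal is correct, but it takes a genuinely different route from the paper. The paper's proof is essentially a reduction to a citation: by \cite[Proposition 3.1]{BW22a} (an earlier result of the same authors), every element of $\GL_d(p)$ is a product of \emph{exactly two} complete mappings lying in $\GL_d(p)$, which gives $\GL_d(p)\leq P_{\comp}(\IF_p^d)$ at once; the negation trick $g=h_1h_2=(-h_1)(-h_2)$ then gives $\GL_d(p)\leq P_{\orth}(\IF_p^d)$ immediately (the bounded word length two is exactly what makes this transfer painless), and translations are adjoined essentially as you do, via composing a fixed complete mapping or orthomorphism with $\rho(x)$. You instead prove the generation statement $\langle\mathcal{G}\rangle=\GL_d(p)$ from scratch: for odd $p$ via transvections (admissible since $\det(2I+\lambda E_{i,j})=2^d\neq0$) together with a determinant argument, and for $q=2^d$, $d\geq3$, via simplicity of $\GL_d(2)$ combined with conjugation-invariance and nonemptiness of $\mathcal{G}$ -- a nice observation that sidesteps the failure of the transvection strategy in characteristic two. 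What the paper's route buys is brevity and the stronger "product of two" conclusion; what yours buys is a self-contained, elementary argument at the cost of case distinctions. Two points to tighten. First, your justification of the orthomorphism step, "$(-A)(-B)=AB$ and $I\in\langle\mathcal{G}\rangle$", is insufficient as written: even-length products of elements of $\mathcal{G}$ could a priori form an index-two subgroup, and $I\in\langle\mathcal{G}\rangle$ holds trivially for any subgroup. What saves the argument is that $I\in\mathcal{G}$ itself when $p$ is odd (as $2I$ is invertible), so any word in $\mathcal{G}$ can be padded to even length, while in characteristic two there is nothing to prove because $-\mathcal{G}=\mathcal{G}$ and orthomorphisms coincide with complete mappings. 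Second, for $p=3$ and $d>2$ your matrix with irreducible quadratic characteristic polynomial must be embedded as a $2\times2$ block alongside an identity block; its characteristic polynomial is then no longer irreducible, but admissibility only requires coprimality to $x(x+1)$, which survives the embedding.
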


\begin{proof}
By \cite[Proposition 3.1]{BW22a}, each element of $\GL_d(p)$ is a product of two complete mappings in $\GL_d(p)$, so certainly $\GL_d(p)\leq P_{\comp}(\IF_p^d)$. Moreover, if $g\in\GL_d(p)$ is written as $g=h_1h_2$ where $h_1,h_2\in\GL_d(p)$ are complete mappings of $\IF_p^d$, then also $g=(-h_1)(-h_2)$, and $-h_1,-h_2\in\GL_d(p)$ are orthomorphisms. Hence $\GL_d(p)\leq P_{\orth}(\IF_p^d)$ as well. But if $\rho$ is the right-regular representation of $\IF_p^d$ on itself, then for each $x\in\IF_p^d$ and each complete mapping, respectively orthomorphism, $f$ of $\IF_p^d$, the composition $\rho(x)\circ f$ is also a complete mapping, respectively orthomorphism, of $\IF_p^d$. Hence $\rho(\IF_p^d)\leq P_{\comp}(\IF_p^d)\cap P_{\orth}(\IF_p^d)$ as well, and since $\AGL_d(p)=\langle\GL_d(p),\rho(\IF_p^d)\rangle$, the result follows.
\end{proof}

\begin{proof}[Proof of Theorem \ref{mainTheo1}]
The field $\IF_2$ has no complete mappings, and orthomorphisms of it are the same as complete mappings, whence
\[
P_{\comp}(\IF_2)=P_{\orth}(\IF_2)=\langle\emptyset\rangle_{\Sym(\IF_2)}=\{\id\}.
\]
For $\IF_3$, it follows by comparing group orders that
\[
\Sym(\IF_3)=\AGL_1(3)=\{x\mapsto ax+b: a\in\IF_3^{\ast},b\in\IF_3\}.
\]
The function $x\mapsto ax+b$ is a complete mapping, respectively an orthomorphism, of $\IF_3$ if and only if $a=1$, respectively $a=-1$. Therefore, the set of complete mappings of $\IF_3$ is equal to
\[
\{x\mapsto x+b: b\in\IF_3\}=\rho(\IF_3)=\Alt(\IF_3),
\]
an index $2$ subgroup of $\Sym(\IF_3)$, and the set of orthomorphisms of $\IF_3$ is the complement of $\Alt(\IF_3)$ in $\Sym(\IF_3)$. It follows that $P_{\comp}(\IF_3)=\Alt(\IF_3)$ and $P_{\orth}(\IF_3)=\Sym(\IF_3)$, as asserted in Table \ref{cTable}.

As for $\IF_4$ (for which complete mappings and orthomorphisms are the same), observe that each complete mapping $f$ of a finite field of characteristic $2$ has
\begin{itemize}
\item exactly one fixed point, since a fixed point of $f$ is the same as a pre-image of $0$ under $\tilde{f}$, and
\item no $2$-cycles, because if $x$ lies on a $2$-cycle of $f$, then $\tilde{f}(x)=x+f(x)=f(f(x))+f(x)=f(x)+f(f(x))=\tilde{f}(f(x))$, contradicting the injectivity of $\tilde{f}$.
\end{itemize}
Therefore, all complete mappings of $\IF_4\cong\IF_2^2=\langle e_1,e_2\rangle$ are $3$-cycles. Hence, a complete mapping of $\IF_4$ that fixes $0$ must be equal to one of $(e_1,e_2,e_1+e_2)$ or $(e_1,e_1+e_2,e_2)=(e_1,e_2,e_1+e_2)^{-1}$. It is not hard to check that $(e_1,e_2,e_1+e_2)$ is a complete mapping of $\IF_4$, whence
\[
P_{\comp}(\IF_4)=P_{\orth}(\IF_4)=\langle(e_1,e_2,e_1+e_2),\rho(\IF_4)\rangle=\Alt(\IF_4).
\]

We may thus assume that $q>4$. Writing $q=p^d$, we have $\AGL_d(p)\leq P_{\comp}(\IF_q)\cap P_{\orth}(\IF_q)$ by Lemma \ref{mainLem2}, and so each of $P_{\comp}(\IF_q)$ and $P_{\orth}(\IF_q)$ must be one of the following according to \cite[Theorem 2]{Sta98a}:
\begin{itemize}
\item $\AGL_d(p)$ or $\Sym(\IF_q)$ if $p>2$;
\item $\AGL_d(p)$, $\Alt(\IF_q)$ or $\Sym(\IF_q)$ if $p=2$.
\end{itemize}
Let us consider the cases \enquote{$p>2$} and \enquote{$p=2$} separately. If $p>2$ and $q\geq 13$ or $q=7$, then \cite[Corollary 1 on p.~206]{NR82a} shows that there is a non-linearized complete permutation polynomial over $\IF_q$. In other words, there is a complete mapping $f$ of $\IF_q$ that does not lie in $\AGL_d(p)$, and so $P_{\comp}(\IF_q)=\Sym(\IF_q)$ necessarily. Moreover, $\tilde{f}=f+\id$ is an orthomorphism of $\IF_q$ that does not lie in $\AGL_d(p)$, whence $P_{\orth}(\IF_q)=\Sym(\IF_q)$ as well. As for $q=5$, one can enumerate the complete mappings of $\IF_5$ completely with a computer (we used GAP \cite{GAP4} for this) to verify that all complete mappings, and thus also all orthomorphisms, of $\IF_5$ lie in $\AGL_1(5)$, which shows that $P_{\comp}(\IF_5)=P_{\orth}(\IF_5)=\AGL_1(5)$. Finally, for $q=9,11$, it is not hard to find a non-additive complete mapping $f$ of $\IF_q$ with $f(0)=0$ through random search (for which we also used GAP):
\begin{itemize}
\item For $\IF_9\cong\IF_3^2=\langle e_1,e_2\rangle$:
\[
f=(0)(2e_2)(e_1,e_1+2e_2,e_2,2e_1+2e_2,2e_1,2e_1+e_2,e_1+e_2).
\]
Note that $f$ fixes $2e_2$ but not $e_2=2(2e_2)$, so $f$ cannot be additive.
\item For $\IF_{11}=\{\overline{k}:k=0,1,\ldots,10\}$:
\[
f=(\overline{0})(\overline{6})(\overline{7})(\overline{9})(\overline{10})(\overline{1},\overline{4},\overline{2},\overline{8},\overline{5},\overline{3}).
\]
Note that $f$ fixes $\overline{6}$ but not $\overline{1}=2\cdot\overline{6}$, so $f$ cannot be additive.
\end{itemize}
Hence $P_{\comp}(\IF_q)=\Sym(\IF_q)$ for those two values of $q$. Since $\tilde{f}=f+\id$ is a non-additive orthomorphism in both cases, we also have $P_{\orth}(\IF_q)=\Sym(\IF_q)$.

Let us now assume that $p=2$ (and $q>4$), and recall that complete mappings are the same as orthomorphisms for those fields. By enumerating the complete mappings of $\IF_8$ using GAP, one can verify that they all are $\IF_2$-affine, whence $P_{\comp}(\IF_8)=P_{\orth}(\IF_8)=\AGL_3(2)$. On the other hand, $\IF_{16}\cong\IF_2^4=\langle e_1,e_2,e_3,e_4\rangle$ has the following odd complete mapping, found through a random search with GAP:
\begin{align*}
&(e_1+e_3,e_2+e_3+e_4,e_1+e_2+e_3)(e_4) \\
&(0,e_3+e_4,e_2+e_3,e_1,e_1+e_4,e_3,e_2+e_4,e_1+e_2+e_3+e_4,e_1+e_2+e_4,e_1+e_3+e_4, \\
&e_2,e_1+e_2).
\end{align*}
Since neither $\AGL_4(2)$ nor $\Alt(\IF_{16})$ contain any odd permutations, this shows that $P_{\comp}(\IF_{16})=P_{\orth}(\IF_{16})=\Sym(\IF_{16})$. Similarly, odd complete mappings of $\IF_{32}$ were found by Schimanski in \cite[table on pages 82--91]{Sch16a} as part of an extensive determination of possible cycle types of complete mappings of that field -- see for example the second entry of the cited table (on page 82 of \cite{Sch16a}), and note that Schimanski encoded each element $(\epsilon_1,\ldots,\epsilon_5)\in\IF_2^5$ through the integer number in $\{0,1,\ldots,31\}$ with binary digit representation $(\epsilon_1\epsilon_2\cdots\epsilon_5)_2$. We infer that $P_{\comp}(\IF_{32})=P_{\orth}(\IF_{32})=\Sym(\IF_{32})$.

Finally, let $q=2^d\geq64$ be an even prime power, and let $N$ be a normal subgroup of the additive group of $\IF_q$ with $|N|=16$. Then $N$ satisfies property (P) -- we specified an odd complete mapping above, and any Singer cycle is an even complete mapping -- and $\IF_q/N\cong\IF_2^{d-4}\geq\IF_2^2$ has a complete mapping. Therefore, $\IF_q$ satisfies property (P) by Lemma \ref{mainLem}. In particular, $\IF_q$ has an odd complete mapping, and so $P_{\comp}(\IF_q)=P_{\orth}(\IF_q)=\Sym(\IF_q)$.
\end{proof}

\begin{proof}[Proof of Theorem \ref{mainTheo4}(1)]
If $G$ is of prime order (larger than $3$), then $G$ satisfies property (P), because $G$ must have odd complete mappings by Theorem \ref{mainTheo1}, while $\id_G$ is an even complete mapping of $G$. We may thus assume that $|G|$ is \emph{not} a prime, and we proceed by induction on $|G|$.

First, consider the case where $G$ is abelian. If $G$ is not a $3$-group, then $|G|$ is divisible by some prime $p>3$, and so $G$ has a (normal) subgroup $N\cong\IZ/p\IZ$. This normal subgroup $N$ satisfies property (P), and $G/N$ has a complete mapping, so $G$ satisfies property (P) by Lemma \ref{mainLem}.

We may thus assume that the abelian group $G$ \emph{is} a $3$-group. Then $|G|=3^d$ for some $d\geq2$. If $d=2$, then $G\cong(\IZ/3\IZ)^2$ or $G\cong\IZ/9\IZ$. In the former case, $G$ has complete mappings of both parities (the identity function and any Singer cycle of $G$ are an even and odd complete mapping respectively), and in the latter case, note that the identity function of $\IZ/9\IZ=\{\overline{k}:k=0,1,\ldots,8\}$ is an even complete mapping, whereas the following permutation (which we found through a random search with GAP \cite{GAP4}) is an odd complete mapping:
\[
(\overline{0},\overline{1},\overline{3},\overline{6},\overline{8})(\overline{2},\overline{4},\overline{7},\overline{5}).
\]
We may thus assume that $d\geq3$. Then $G$ has a (normal) subgroup $N$ with $|N|=9$. Since $N$ satisfies property (P), and $G/N$ has a complete mapping, we conclude that $G$ satisfies property (P) by Lemma \ref{mainLem}.

Now assume that $G$ is nonabelian. Then the commutator subgroup $G'$ is nontrivial (and proper since $G$ is solvable). If $|G'|>3$, we are done by our induction hypothesis (applied to $G'$) and Lemma \ref{mainLem}. Hence, assume that $|G'|=3$. If the (abelian) group $G/G'$ is not of prime order, then $G/G'$ has a proper nontrivial normal subgroup $\overline{N}$, which can be lifted to a proper nontrival normal subgroup $N$ of $G$ with $|N|>|G'|=3$, and we are again done by the induction hypothesis (applied to $N$) and Lemma \ref{mainLem}. It remains to consider the case where $|G/G'|=p$ for some (odd) prime $p$. Note that $p>3$, since otherwise, $|G|=9$ and $G$ is abelian, contradicting our case assumption. Since $|G|=3p$ and $G$ has a cyclic subgroup $H$ of order $p$, we conclude that $G=H\ltimes G'$. Since $H$ is of odd order and $\Aut(G')\cong\Aut(\IZ/3\IZ)\cong\IZ/2\IZ$, no element of $H$ can act nontrivially on $G'$ in this semidirect product. Hence the semidirect product in question is actually a direct product: $G=H\times G'$. But this implies that $G$ is abelian, again contradicting our case assumption.
\end{proof}

\section{The groups generated by all complete mappings or orthomorphisms}\label{sec3}

In this section, we will prove Theorems \ref{mainTheo2} as well as \ref{mainTheo4}(4), starting with the former:

\begin{proof}[Proof of Theorem \ref{mainTheo2}]
Let $G$ be a finite group of order $n$ satisfying the assumptions of Theorem \ref{mainTheo2}. Our goal is to show that both $P_{\comp}(G)$ and $P_{\orth}(G)$ contain $\Alt(G)$. The proof is analogous in both cases; let $P(G)\in\{P_{\comp}(G),P_{\orth}(G)\}$, and assume that $\Alt(G)$ is \emph{not} contained in $P(G)$. Since $G$ has as many complete mappings as it has orthomorphisms (with $f\mapsto f\circ\inv$ being a bijection in either direction), it follows that
\[
|P(G)|\geq c|G/G'|\frac{(|G|!)^2}{|G|^{|G|}}.
\]
We distinguish the following three cases:
\begin{enumerate}
\item $P(G)$ is intransitive.
\item $P(G)$ is transitive, but imprimitive.
\item $P(G)$ is primitive.
\end{enumerate}

Assume that case (1) applies. If $f$ is a complete mapping, respectively an orthomorphism, of $G$, then for each $g\in G$, the function $G\rightarrow G$, $x\mapsto f(x)g$, is also a complete mapping, respectively an orthomorphism, of $G$. But this function can be written as the composition $\rho(g)\circ f$ where $\rho(g):G\rightarrow G, x\mapsto xg$, is the image of $g$ under the right-regular representation $\rho$ of $G$ on itself. It follows that $\rho(G)\leq P(G)$, contradicting the intransitivity of $P(G)$.

Now assume that case (3) applies. Since $P(G)\not=\Alt(G),\Sym(G)$, we may use \cite[Theorem]{PS80a} to conclude that $|P(G)|<4^n$. Hence
\[
c\frac{n^n}{\e^{2n}}<c\frac{(n!)^2}{n^n}\leq c|G/G'|\frac{(n!)^2}{n^n}<4^n,
\]
where the first inequality holds because of the following Stirling-like bound due to Robbins \cite{Rob55a}:
\[
n!>\e^{\frac{1}{12n+1}}\sqrt{2\pi n}\left(\frac{n}{\e}\right)^n\geq\left(\frac{n}{\e}\right)^n.
\]
Taking logarithms on both sides of the inequality
\[
c\frac{n^n}{\e^{2n}}<4^n,
\]
we obtain
\[
\log{c}+n\log{n}-2n<n\log{4},
\]
or, equivalently,
\[
n(\log{4}+2-\log{n})>\log{c}.
\]
However, using that $n\geq\max\{280816,-11\log{c}\}$, we obtain
\[
n(\log{4}+2-\log{n})<(-11\log{c})\cdot(-9)=99\log{c}<\log{c},
\]
a contradiction.

Finally, assume that case (2) applies. Let $\Bcal=\{B_1,B_2,\ldots,B_m\}$ be a nontrivial block system of $G$ preserved by $P(G)$. As noted in the argument for case (1), we have $\rho(G)\leq P(G)$, so $\rho(G)$ preserves $\Bcal$ as well. But the only block systems of $G$ preserved by $\rho(G)$ are the decompositions of $G$ into the right cosets of a given subgroup. Indeed, the block containing $1_G$, say $B_1$, is a subgroup of $G$: If $g\in B_1$, then $\rho(g)$ maps $1_G\in B_1$ to $g\in B_1$, whence $\rho(g)(B_1)=B_1$. In particular, if $g,g'\in B_1$, then $gg'=\rho(g')(g)\in B_1$ and $g^{-1}=\rho(g)^{-1}(1_G)\in B_1$. Since each $B_i$ is of the form $\rho(g_i)(B_1)=B_1g_i$ for a suitable $g_i\in G$, and since the blocks $B_i$ partition $G$, $\Bcal$ is the set of all right cosets of $B_1$ in $G$, as required.

For simplicity, write $H$ for the block containing $1_G$. Since all $m$ blocks are of the same size, note that $|H|=\frac{|G|}{m}$ is a proper, nontrivial divisor of $|G|$.

First, assume that $m=|G:H|=2$, i.e., that $\Bcal=\{H,G\setminus H\}$. Let $f$ be a complete mapping, respectively orthomorphism, of $G$. Since $f$ preserves $\Bcal$, it induces a complete mapping, respectively orthomorphism, on $G/H=\Bcal$. But $G/H\cong\IZ/2\IZ$ does not have any complete mappings nor orthomorphisms, a contradiction.

So we may henceforth assume that $m\geq 3$. For ease of notation, set $k:=|H|$, so that $n=|G|=m\cdot k$. Note that by assumption, $P(G)$ is contained in a (maximal) subgroup $M$ of $\Sym(G)$ that is isomorphic (as a permutation group) to the imprimitive permutational wreath product $\Sym(k)\wr_{\imp}\Sym(m)$. In particular, $|M|=(k!)^m\cdot m!$. We distinguish two cases:
\begin{itemize}
\item Case: $k\leq n^{\frac{1}{3}}$. Then, using Robbins' \cite{Rob55a} Stirling-like bound
\[
\ell!\leq\e^{\frac{1}{12}}\sqrt{2\pi\ell}\left(\frac{\ell}{\e}\right)^{\ell}\text{ for all }\ell\geq1,
\]
as well as that $k\geq2$ and $n\geq6$, we conclude that
\begin{align*}
c\frac{n^n}{\e^{2n}}&\leq|P(G)|\leq|M|=(k!)^m\cdot m!\leq\e^{\frac{1}{12}(m+1)}(2\pi k)^{m/2}(2\pi m)^{1/2}\left(\frac{k}{\e}\right)^n\left(\frac{m}{\e}\right)^m \\
&=\e^{\frac{1}{12}\left(\frac{n}{k}+1\right)}(2\pi k)^{\frac{n}{2k}}\left(2\pi\frac{n}{k}\right)^{\frac{1}{2}}\left(\frac{k}{\e}\right)^n\left(\frac{n}{\e k}\right)^{\frac{n}{k}} \\
&=\e^{\frac{1}{12}\left(\frac{n}{k}+1\right)}\cdot\left(2\pi\right)^{\frac{n}{2k}+\frac{1}{2}}\cdot\e^{-\left(1+\frac{1}{k}\right)n}\cdot k^{n+\frac{n}{2k}-\frac{1}{2}-\frac{n}{k}}\cdot n^{\frac{1}{2}+\frac{n}{k}} \\
&\leq\e^{\frac{1}{12}}(2\pi)^{\frac{1}{2}}\left(\frac{\e^{1/24}(2\pi)^{1/4}}{\e}\right)^n\cdot n^{\frac{1}{3}\left(n-\frac{1}{2}-\frac{n}{2k}\right)}\cdot n^{\frac{n}{2}+\frac{1}{2}} \\
&\leq1\cdot n^{\left(\frac{1}{3}+\frac{1}{2}\right)n}\cdot n^{-\frac{2}{3}+\frac{1}{2}}\leq n^{\frac{5}{6}n}.
\end{align*}
But $n^{\frac{5}{6}n}<c\frac{n^n}{\e^{2n}}$ is equivalent to $c^{-1}\e^{2n}<n^{\frac{1}{6}{n}}$, and further (through taking logarithms on both sides) to $\log\left(c^{-1}\right)+2n<\frac{1}{6}n\log{n}$, which simplifies to
\[
n>\frac{1}{\frac{1}{6}\log{n}-2}\log\left(c^{-1}\right),
\]
which is true since $(\frac{1}{6}\log{n}-2)^{-1}<11$ due to $n\geq 280816$.
\item Case: $k>n^{\frac{1}{3}}$. Note that we ruled out the case $m=\frac{n}{k}=2$ above. Assume that $m\in\{3,4,5\}$. Since $n\geq459$, we have
\begin{align*}
c\frac{n^n}{\e^{2n}}&\leq|P(G)|\leq|M|=(k!)^m\cdot m!\leq 120\cdot(k!)^m\leq 120\e^{\frac{m}{12}}\left(2\pi k\right)^{\frac{m}{2}}\left(\frac{k}{\e}\right)^n \\
&\leq 120\e^{\frac{5}{12}}\left(2\pi\right)^{\frac{5}{2}}k^{n+\frac{5}{2}}\cdot\frac{1}{\e^n}=120\e^{\frac{5}{12}}\left(2\pi\right)^{\frac{5}{2}}\left(\frac{n}{m}\right)^{n+\frac{5}{2}}\cdot\frac{1}{\e^n} \\
&\leq120\e^{\frac{5}{12}}\left(2\pi\right)^{\frac{5}{2}}\frac{1}{3^{5/2}}n^{\frac{5}{2}}\cdot\left(\frac{n}{3\e}\right)^n\leq\left(\frac{201}{200}\right)^n\cdot\left(\frac{n}{3\e}\right)^n=\left(\e\cdot\frac{67}{200}\right)^n\cdot\frac{n^n}{\e^{2n}},
\end{align*}
which is less than $c\frac{n^n}{\e^{2n}}$ (and thus yields a contradiction) because
\[
n>11\log\left(c^{-1}\right)>-\frac{1}{\log\left(\e\cdot\frac{67}{200}\right)}\log\left(c^{-1}\right).
\]

Now assume that $m\geq6$. Note that because $k>n^{\frac{1}{3}}$, we have $m<n^{\frac{2}{3}}$. Since $|M|=(k!)^m\cdot m!$, we get (by taking logarithms on both sides)
\begin{align*}
\log{|M|}&=m\cdot\log\left(k!\right)+\log\left(m!\right)\leq\frac{n}{k}\cdot\log\left(\e^{\frac{1}{12}}\sqrt{2\pi k}\left(\frac{k}{\e}\right)^k\right)+\log{m^m} \\
&\leq\frac{n}{k}\cdot\left(\frac{1}{12}+\frac{1}{2}\log\left(2\pi\right)+\frac{1}{2}\log{k}+k\log{k}-k\right)+n^{\frac{2}{3}}\cdot\frac{2}{3}\log{n} \\
&<n\cdot\left(\log{k}-\frac{1}{2}\right)+\frac{2}{3}n^{\frac{2}{3}}\log{n}=n\cdot\left(\log{\frac{n}{m}}-\frac{1}{2}\right)+\frac{2}{3}n^{\frac{2}{3}}\log{n} \\
&=n\log{n}-\frac{n}{2}-n\log{m}+\frac{2}{3}n^{\frac{2}{3}}\log{n} \\
&=n\log{n}-n\cdot\left(\frac{1}{2}+\log{m}-\frac{2}{3}n^{-\frac{1}{3}}\log{n}\right) \\
&\leq n\log{n}-n\cdot\left(\frac{1}{2}+\log{6}-\frac{2}{3}n^{-\frac{1}{3}}\log{n}\right)\leq n\log{n}-2.16n,
\end{align*}
where the strict inequality in this chain uses that
\[
\frac{1}{12}+\frac{1}{2}\log\left(2\pi\right)+\frac{1}{2}\log{k}<\frac{k}{2},
\]
which is true because $k\geq4$ due to $n\geq64$. Applying the exponential function to both sides of the derived inequality $\log{|M|}\leq n\log{n}-2.16n$, we conclude that
\[
|M|\leq\frac{n^n}{\e^{2.16n}}=\frac{1}{\e^{0.16n}}\cdot\frac{n^n}{\e^{2n}},
\]
and this is less than $c\frac{n^n}{\e^{2n}}$ (thus yielding a contradiction) since
\[
n>11\log\left(c^{-1}\right)>0.16^{-1}\log\left(c^{-1}\right).
\]
\end{itemize}
\end{proof}

\begin{proof}[Proof of Theorem \ref{mainTheo4}(4)]
Let $G$ be a sufficiently large finite group that satisfies the Hall-Paige condition. Then $\Alt(G)\leq P_{\comp}(G)$ by \cite[Theorem 1.2]{EMM22a} and Theorem \ref{mainTheo2}, which we proved just above. We want to show that $P_{\comp}(G)=\Sym(G)$, which holds if and only if $G$ has an odd complete mapping, and we will verify this to be true in the following case distinction:
\begin{enumerate}
\item Case: $G$ is of odd order. Then $G$ satisfies property (P) by Theorem \ref{mainTheo4}(1), which was proved in the previous section. In particular, $G$ has an odd complete mapping, as required.
\item Case: $G$ is an elementary abelian $2$-group. Then Theorem \ref{mainTheo1}, which was also proved in the previous section, directly states that $P_{\comp}(G)=\Sym(G)$.
\item Case: $G$ is of even order, but not an elementary abelian $2$-group. By \cite[Theorem 6.9]{MP22a}, $G$ has a complete mapping $f$ that permutes the elements of $G$ in a single cycle. In particular, $f$ is an odd complete mapping of $G$, as required.
\end{enumerate}
\end{proof}

\section{Solvable groups}\label{sec4}

In order to deal with statements (2), (3) and (5) of Theorem \ref{mainTheo4}, we follow Hall and Paige's approach from \cite[proofs of Lemma 1 and Theorem 4]{HP55a}. If $G$ is a finite solvable group, and if $S$ is a Sylow $2$-subgroup of $G$ with Hall complement $H$, then $G=S\cdot H=H\cdot S$. This means that each of $S$ and $H$ forms a left and right transversal for the respective other in $G$, and the following can be easily derived from Lemma \ref{mainLemVar}:

\begin{proposition}\label{solvableProp}
If both $S$ and $H$ admit complete mappings, and at least one of them satisfies property (P), then $G$ satisfies property (P).
\end{proposition}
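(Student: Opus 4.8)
The plan is to reduce directly to Lemma \ref{mainLemVar}, exploiting the symmetry between $S$ and $H$. First I would record the basic structure. Since $S$ is a Sylow $2$-subgroup of the solvable group $G$ and $H$ a Hall $2'$-complement, their orders are coprime, so $S\cap H=\{1_G\}$ and $|G|=|S|\cdot|H|$; combined with $G=SH=HS$ this says that the elements of $H$ form simultaneously a left and a right transversal of $S$ in $G$, and symmetrically the elements of $S$ form a two-sided transversal of $H$. Let $K$ be whichever of $S,H$ satisfies property (P) and let $L$ be the other. Then $G=KL=LK$, the elements of $L$ form a two-sided transversal of $K$, and (by hypothesis) both $K$ and $L$ admit complete mappings.

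Next I would manufacture the index permutations required by Lemma \ref{mainLemVar} out of the group structure of the transversal $L$, in exact parallel with the proof of Lemma \ref{mainLem} (where the analogous permutations were built from a complete mapping of the quotient $G/N$). Enumerate the elements of $L$ as $u_1,\ldots,u_k$, where $k=|L|=|G:K|$, with $u_1=1_G$; note that $u_1\in K$, as the lemma demands. Choose a complete mapping $\phi$ of $L$ and normalize it so that $\phi(1_L)=1_L$; this is legitimate because replacing $\phi$ by $x\mapsto\phi(x)\phi(1_L)^{-1}$ again yields a complete mapping of $L$ (right-translating the output of a complete mapping preserves both the bijectivity of the map and of its associated orthomorphism). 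Then define permutations $\sigma,\tau\in\Sym(k)$, playing the roles of the permutations called $S,T$ in Lemma \ref{mainLemVar}, by $u_{\sigma(i)}=\phi(u_i)$ and $u_{\tau(i)}=u_i\phi(u_i)=\widetilde{\phi}(u_i)$. These are genuine permutations precisely because $\phi$ and its associated orthomorphism $\widetilde{\phi}$ permute $L$, and the normalization forces $\sigma(1)=\tau(1)=1$.

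The point that makes everything fit is that, because $L$ is an actual complemented subgroup rather than merely a quotient, the compatibility condition of Lemma \ref{mainLemVar} holds as an equality of group elements, not just of cosets: for each $i$ we have $u_iu_{\sigma(i)}=u_i\phi(u_i)=u_{\tau(i)}$, whence a fortiori $u_iu_{\sigma(i)}K=u_{\tau(i)}K$. With this, every hypothesis of Lemma \ref{mainLemVar} is verified, taking the subgroup there to be $K$ and the two-sided transversal to be $(u_i)_{i=1}^k$. Since $K$ admits a complete mapping, the lemma produces a complete mapping of $G$ stabilizing $K$; and since $K$ satisfies property (P), the \enquote{In particular} clause of Lemma \ref{mainLemVar} yields that $G$ satisfies property (P) as well.

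I do not anticipate a serious obstacle, as Lemma \ref{mainLemVar} does the heavy lifting. The only points requiring care are the bookkeeping of which object plays which role — in particular keeping the index permutations $\sigma,\tau$ notationally distinct from the Sylow subgroup $S$ — and the observation that a two-sided transversal which is itself a complemented subgroup lets one upgrade the coset condition of the lemma to exact equality, so that a complete mapping of the transversal $L$ supplies $\sigma$ and $\tau$ for free.
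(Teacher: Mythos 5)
Your proof is correct and is exactly the derivation the paper has in mind: the paper states Proposition \ref{solvableProp} follows easily from Lemma \ref{mainLemVar}, and you fill in the details in the natural way, mirroring the proof of Lemma \ref{mainLem} by using a normalized complete mapping of the complementary factor $L$ to build the index permutations, with the coset condition holding as an exact equality because $L$ is a subgroup. The bookkeeping (normalization $\phi(1_L)=1_L$, $\sigma(1)=\tau(1)=1$, $u_1=1_G\in K$, and $L$ being a two-sided transversal) is all handled correctly.
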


Using Proposition \ref{solvableProp}, we can easily derive statements (3) and (5) in Theorem \ref{mainTheo4} from its statements (1) and (2), the latter of which we still need to prove:

\begin{proof}[Proof of Theorem \ref{mainTheo4}(3,5) assuming Theorem \ref{mainTheo4}(2) to be true]
First, let us discuss the proof of statement (3). If $G=S\cdot H$ is a finite solvable group factored as above and $|G|>24$, then at least one of the two inequalities $|S|>8$ and $|H|>3$ must hold. Since $G$ satisfies the Hall-Paige condition by assumption, it follows that each of $S$ and $H$ admits a complete mapping, and by Theorem \ref{mainTheo4}(1,2), at least one of them satisfies property (P). Hence Proposition \ref{solvableProp} can be applied to conclude that $G$ satisfies property (P), as required.

Now we prove statement (5). By \cite[Theorem 1.2]{EMM22a} and Theorem \ref{mainTheo2}, we know that $\Alt(G)\leq P_{\orth}(G)$, so we will have $P_{\orth}(G)=\Sym(G)$ as long as $G$ has an odd orthomorphism. But by statement (3) of Theorem \ref{mainTheo4}, proved just above, $G$ satisfies property (P), so it does have an odd orthomorphism.
\end{proof}

We can thus focus our attention on Theorem \ref{mainTheo4}(2), i.e., on $2$-groups. Propositions \ref{specialCasesProp} and \ref{orders16and32Prop} below are key auxiliary results.

\begin{proposition}\label{specialCasesProp}
Each of the following finite $2$-groups satisfies property (P):
\begin{enumerate}
\item the abelian, \enquote{\underline{a}lmost \underline{c}yclic} group $\AC_{2^n}=C_{2^{n-1}}\times C_2$ for $n\geq3$;
\item the dihedral group
\[
\D_{2^n}=\langle x,y: x^{2^{n-1}}=y^2=1, y^{-1}xy=x^{-1}\rangle
\]
for $n\geq3$;
\item the generalized quaternion (or, synonymously, dicyclic) group
\[
\Q_{2^n}=\langle x,y: x^{2^{n-1}}=1, y^2=x^{2^{n-2}}, y^{-1}xy=x^{-1}\rangle
\]
for $n\geq4$;
\item the semidihedral group
\[
\SD_{2^n}=\langle x,y: x^{2^{n-1}}=y^2=1, y^{-1}xy=x^{2^{n-2}-1}\rangle
\]
for $n\geq4$;
\item the modular group
\[
\M_{2^n}=\langle x,y:x^{2^{n-1}}=y^2=1, y^{-1}xy=x^{2^{n-2}+1}\rangle
\]
for $n\geq4$.
\end{enumerate}
\end{proposition}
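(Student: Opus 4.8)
The plan is to reduce property (P) to a cycle count and then to exhibit, for each of the five families, two explicit complete mappings of opposite parity. First I would record the elementary fact that since each group $G$ here has order $2^n$, a permutation of $G$ is even if and only if it has an even number of cycles (a permutation of $2^n$ points with $c$ cycles has sign $(-1)^{2^n-c}=(-1)^c$). Thus property (P) amounts to producing a complete mapping of $G$ with an odd number of cycles and one with an even number of cycles. I would also note at the outset why the inductive machinery is unavailable: every one of these groups has a cyclic maximal subgroup $\langle x\rangle\cong C_{2^{n-1}}$ of index $2$, and both $\langle x\rangle$ and the quotient $G/\langle x\rangle\cong C_2$ are cyclic of $2$-power order, hence have no complete mapping at all; so neither Lemma \ref{mainLem} nor Lemma \ref{mainLemVar} applies, and fully explicit constructions are forced. (A short computation shows that in the abelian case $\AC_{2^n}$ there is moreover no \emph{linear} complete mapping, since the conditions that an endomorphism $M$ and $I+M$ both induce automorphisms of $G/2G\cong\IF_2^2$ turn out to be incompatible, so even there the constructions must be nonlinear.)

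The uniform framework I would use exploits the common shape of all five groups: writing $G=\langle x\rangle\,\sqcup\,\langle x\rangle y$ with $y^2=z\in\{1,x^{2^{n-2}}\}$ and $y^{-1}xy=x^r$, every element is uniquely $x^i$ or $x^iy$ with $0\le i<2^{n-1}$, and the five cases are distinguished only by the pair $(z,r)$. I would then define each candidate complete mapping $f$ \emph{piecewise} across the two cosets, by explicit formulas for $f(x^i)$ and $f(x^iy)$ as functions of $i\bmod 2^{n-1}$. For the \textbf{odd} complete mapping the target is a \emph{harmonious ordering} of $G$, equivalently a complete mapping that is a single $2^n$-cycle; since a $2^n$-cycle is an odd permutation, this settles one parity. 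The construction interleaves the two cosets and is verified to be harmonious by checking that the $2^n$ products $g_ig_{i+1}$ are pairwise distinct, which is where the relation $y^{-1}xy=x^r$ enters. (For $\M_{2^n}$ this recovers the harmoniousness claim already announced in the Introduction.)

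For the \textbf{even} complete mapping I would give a second explicit piecewise formula, designed so that its cycle structure is transparent — for instance one built to be a product of two long cycles, or one that fixes $1_G$ and pairs the remaining elements into cycles in a controlled way — so that the number of cycles is manifestly even. For each family I would verify the two defining conditions directly: that $f$ is a bijection of $G$, and that $\tilde f\colon g\mapsto g\,f(g)$ is a bijection of $G$. The latter is checked by computing $g\,f(g)$ on each coset using $y^2=z$ and $y^{-1}xy=x^r$ and confirming that the resulting values run exactly once over $G$.

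The main obstacle is the verification step, carried out uniformly in $n$ and separately for the five pairs $(z,r)$: one must show that the coset-wise products $g\,f(g)$ hit every element of $G$ exactly once (this is the substantive content of the complete-mapping property, and the interaction of the two cosets through the twisting exponent $r$ makes the bookkeeping delicate, especially for the semidihedral and modular cases where $r=2^{n-2}\pm1$), and one must pin down the exact number of cycles of each constructed map modulo $2$. These arguments are elementary but lengthy and genuinely case-dependent, so I expect to isolate the harmoniousness verification and the cycle-count computations into separate technical lemmas and to defer the most computational instances to the appendices, treating $\AC_{2^n}$, $\D_{2^n}$, $\Q_{2^n}$, $\SD_{2^n}$, and $\M_{2^n}$ in turn.
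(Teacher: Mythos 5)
Your framing coincides with the paper's: parity is read off from cycle counts, the inductive lemmas are indeed unavailable here (these five families are exactly the noncyclic $2$-groups with a cyclic subgroup of index $2$, which is why they form the induction base of Theorem \ref{mainTheo4}(2)), odd complete mappings are sought as harmonious orderings, and even ones as explicit piecewise constructions whose cycle type can be controlled. The difficulty is that your proposal defers every substantive step --- no piecewise formula is written down, no bijectivity of $\tilde f$ is checked, no cycle count is performed --- and in two of the five cases the deferred step is not merely lengthy bookkeeping but is a point where the plan, as stated, is not known to go through.

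First, for $\SD_{2^n}$ you propose to obtain the odd complete mapping from a harmonious ordering. Whether $\SD_{2^n}$ is harmonious is precisely Question \ref{openQues4}(1) of the paper, posed there as an \emph{open problem}; the authors could not produce such an ordering and instead used a composition trick: Hall--Paige's explicit complete mapping $f$ of $\SD_{2^n}$ is even, its associated orthomorphism $g=\tilde f$ is shown to be even by an inversion count (Appendix A), and since the inversion function $\inv$ of $\SD_{2^n}$ is an \emph{odd} permutation (it has $2^{n-2}+2$ fixed points), $g\circ\inv$ is an odd complete mapping. Without this or some comparable workaround, your semidihedral case rests on an unproved and apparently hard claim. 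Second, for $\M_{2^n}$ you expect an even complete mapping \enquote{whose cycle structure is transparent}; in the paper this is the most technical point of the whole proposition: the even complete mapping of $\M_{16k}$ (Appendix C) is obtained by lifting, via Wilcox's construction, a complete mapping of the quotient $\AC_{8k}$, its parity is determined not from its cycle structure but from a polynomial-in-$k$ inversion count, and the leftover group $\M_{16}$ required a computer search (Appendix D). For $\AC_{2^n}$, $\D_{2^n}$ and $\Q_{2^n}$ your outline is realistic --- harmoniousness is available (and citable), and Hall--Paige's mapping is visibly even, consisting of $2^{n-2}$ fixed points, $2^{n-3}$ transpositions and $2^{n-3}$ four-cycles --- but as written the proposal is a plan with placeholders rather than a proof, and for $\SD_{2^n}$ it follows a route that no one has shown to exist.
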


We remark that the group $\AC_4\cong\D_4\cong\Q_4\cong\SD_4\cong\M_4\cong C_2^2$ only has even complete mappings (see Theorem \ref{mainTheo1}), as does $\Q_8$ (as a simple brute-force search using GAP \cite{GAP4} shows). Moreover, $\SD_8\cong\AC_8$ and $\M_8\cong\D_8$. For $n\geq4$, the five groups $\AC_{2^n}$, $\D_{2^n}$, $\Q_{2^n}$, $\SD_{2^n}$ and $\M_{2^n}$ are pairwise non-isomorphic and are the only noncyclic groups of order $2^n$ with a cyclic subgroup of index $2$ (see \cite[result 5.3.4 on p.~141]{Rob96a}), a fact that will become important later.

\begin{proof}[Proof of Proposition \ref{specialCasesProp}]
We begin this proof by introducing some concepts studied in the literature and a few facts about them. Let $G$ be a finite group, say of order $n$. Recall from the Introduction the notion of a \emph{harmonious ordering of $G$}, that $G$ is called \emph{harmonious} if and only if it has a harmonious ordering, and the following fact:
\begin{enumerate}
\item $G$ is harmonious if and only if $G$ has a complete mapping that is an $n$-cycle. In fact, $g_1,g_2,\ldots,g_n$ is a harmonious ordering of $G$ if and only if the $n$-cycle $(g_1,g_2,\ldots,g_n)$ is a complete mapping of $G$.
\end{enumerate}
An \emph{R-sequencing of $G$}, as defined in \cite[Subsection 2.4]{Oll02a}, is a repetition-free list $1_G=g_1,g_2,\ldots,g_n$ of the elements of $G$ such that the $n-1$ partial products $b_1=g_1=1_G,b_2=g_1g_2=g_2,\ldots,b_{n-1}=g_1g_2\cdots g_{n-1}=g_2g_3\cdots g_{n-1}$ are pairwise distinct and the full product $b_n=g_1g_2\cdots g_n=g_2g_3\cdots g_n$ is equal to $1_G$. If $G$ has at least one R-sequencing, then $G$ is called \emph{R-sequenceable}. Here, the connection to complete mappings or orthomorphisms is not as obvious as for harmonious orderings, but we note:
\begin{enumerate}\setcounter{enumi}{1}
\item $G$ is R-sequenceable if and only if $G$ has an orthomorphism that fixes some nontrivial element of $G$ and moves the remaining $n-1$ elements of $G$ in a single cycle. In fact, a repetition-free listing $1_G=g_1,g_2,\ldots,g_n$ of the elements of $G$ is an R-sequencing of $G$ if and only if the partial products $b_1,b_2,\ldots,b_{n-1}$ are pairwise distinct and $(b_1,b_2,\ldots,b_{n-1})$ is an orthomorphism of $G$. This observation appears to date back to Paige's paper \cite{Pai51a}. More precisely, Paige observed that if $g_1,g_2,\ldots,g_n$ is an R-sequencing of $G$, then the function $f:G\rightarrow G$ with
\begin{itemize}
\item $f(b_i)=b_i^{-1}b_{i+1}=g_{i+1}$ for $i=2,3,\ldots,n-1$,
\item $f(b_1)=f(1_G)=g_2=b_2$, and
\item $f(c)=1_G$ where $c$ is defined via $\{c\}=G\setminus\{b_1,b_2,\ldots,b_{n-1}\}$,
\end{itemize}
is a complete mapping of $G$. Note that $\tilde{f}=(b_1,b_2,\ldots,b_{n-1})$, which proves one direction in the asserted equivalence. Conversely, if the $(n-1)$-cycle $(b_1,b_2,\ldots,b_{n-1})$ is an orthomorphism of $G$ whose unique fixed point is nontrivial, and if, say, $b_1=1_G$, then setting $g_1:=1_G$, $g_2:=b_2$, $g_j:=b_{j-1}^{-1}b_j$ for $j=3,4,\ldots,n-1$, and $g_n:=b_{n-1}^{-1}$, one verifies easily that $g_1,g_2,\ldots,g_n$ is an R-sequencing of $G$.
\end{enumerate}
We also note that some authors, such as Friedlander, Gordon and Miller in \cite{FGM78a}, instead call a repetition-free list $g_1,g_2,\ldots,g_{n-1}$ of the nontrivial elements of $G$ an R-sequencing of $G$ if $(g_1,g_2,\ldots,g_{n-1})$ is an orthomorphism of $G$. That the existence of an orthomorphism of $G$ which fixes $1_G$ and moves the nontrivial elements of $G$ in one cycle (i.e., of an \emph{$(|G|-1,1)$ complete mapping of $G$} in the terminology of \cite{HK84a}) is equivalent to $G$ being R-sequenceable was observed by Hsu and Keedwell, see \cite[Theorem 2.1(iii)]{HK84a}. The final fact which we will need is the following, and it was already noted in our Introduction:
\begin{enumerate}\setcounter{enumi}{2}
\item Let $\inv:G\rightarrow G, g\mapsto g^{-1}$, be the inversion function of $G$. A function $f:G\rightarrow G$ is a complete mapping of $G$ if and only if $f\circ\inv$ is an orthomorphism of $G$. Moreover, if $G$ is abelian, then the analogous statement with $\inv\circ f$ (more commonly written $-f$ if $G$ is written additively) in place of $f\circ\inv$ also holds.
\end{enumerate}
After these theoretical preparations, let us now turn to the actual proof of Proposition \ref{specialCasesProp}. The more technical aspects of the proof are deferred to the various Appendices.
\begin{itemize}
\item For statement (1): By \cite[Theorem 6.6]{BGHJ91a}, the group $\AC_{2^n}$ is harmonious; in particular, it has an odd complete mapping (one that is a single $2^n$-cycle) by fact (1) above. By \cite[Theorem 7]{FGM78a}, $\AC_{2^n}$ is also R-sequenceable and thus has an even orthomorphism $g$ (one of cycle type $x_1x_{2^n-1}$) by fact (2). But the inversion function $\inv$ of $\AC_{2^n}$ is even -- there are exactly $2\cdot2=4$ elements of order dividing $2$ in $\AC_{2^n}$, so $\inv$ has $4$ fixed points and $\frac{2^n-4}{2}=2^{n-1}-2$ transpositions. Consequently, $-g=\inv\circ g$ is an even complete mapping of $\AC_{2^n}$ by fact (3).
\item For statement (2): By \cite[Theorem 5.8]{BGHJ91a}, the group $\D_{2^n}$ is harmonious, so it has an odd complete mapping by fact (1). The elements of $\D_{2^n}$ can be written in normal form as $x^{\ell}y^{\epsilon}$ where $\ell\in\{0,1,\ldots,2^{n-1}-1\}$ and $\epsilon\in\{0,1\}$. Based on this representation, and setting $m:=2^{n-2}$, Hall and Paige defined a complete mapping $f$ of $\D_{2^n}$ as follows in \cite[proof of Lemma 1]{HP55a}:
\begin{equation}\label{hpEvenEq}
f(x^{\ell}y^{\epsilon})=
\begin{cases}
x^{\ell}, & \text{if }\epsilon=0\text{ and }0\leq\ell\leq m-1, \\
x^{\ell-m}y, & \text{if }\epsilon=0\text{ and }m\leq\ell\leq 2m-1, \\
x^{-(\ell+1)}, & \text{if }\epsilon=1\text{ and }0\leq\ell\leq m-1, \\
x^{m-(\ell+1)}y, & \text{if }\epsilon=1\text{ and }m\leq\ell\leq 2m-1.
\end{cases}
\end{equation}
Set
\[
B_i:=
\begin{cases}
\{x^{\ell}: 0\leq\ell\leq m-1\}, & \text{if }i=1, \\
\{x^{\ell}: m\leq\ell\leq 2m-1\}, & \text{if }i=2, \\
\{x^{\ell}y: 0\leq\ell\leq m-1\}, & \text{if }i=3, \\
\{x^{\ell}y: m\leq\ell\leq 2m-1\}, & \text{if }i=4.
\end{cases}
\]
Then it is readily checked that
\[
f(B_1)=B_1, f(B_2)=B_3, f(B_3)=B_2, f(B_4)=B_4.
\]
Each point in $B_1$ is a fixed point of $f$, and on $B_4$, all points lie on transpositions of $f$: For $m\leq\ell\leq 2m-1$, observe that
\[
x^{\ell}y\xmapsto{f}x^{m-(\ell+1)}y\xmapsto{f}x^{m-(m-(\ell+1)+1)}y=x^{\ell}y,
\]
and since $2\ell\not\equiv m-1\Mod{2m}$ (as $2\ell$ is even but $m-1$ is odd), one has $\ell\not\equiv m-(\ell+1)\Mod{2m}$, and thus $x^{\ell}y\not=x^{m-(\ell+1)}y$. Finally, the points in $B_2$ (and thus also the points in $B_3$) all lie on $4$-cycles of $f$: For $m\leq\ell\leq 2m-1$, one has
\[
x^{\ell}\xmapsto{f} x^{\ell-m}y\xmapsto{f} x^{-(\ell-m+1)}=x^{-\ell+m-1}\xmapsto{f}x^{-\ell-1}y\xmapsto{f}x^{-(-\ell-1+1)}=x^{\ell},
\]
and $x^{-\ell+m-1}=x^{m-(\ell+1)}\not=x^{\ell}$, as was argued a few lines above.

In summary, $f$ consists of $m$ fixed points, $\frac{m}{2}$ transpositions and $\frac{m}{2}$ cycles of length $4$. In particular, since $m=2^{n-2}$ is even, $f$ is an even permutation, as required.
\item For statement (3): By \cite[Theorem 1]{WL00a}, the group $\Q_{2^n}$ is harmonious (please mind the difference in notation: Wang and Leonard write $\Q_m$, not $\Q_{2m}$, for the dicyclic group of order $2m$), whence it has an odd complete mapping. Like for $\D_{2^n}$, the elements of $\Q_{2^n}$ can be written in the normal form $x^{\ell}y^{\epsilon}$ with $\ell\in\{0,1,\ldots,2^{n-1}-1\}$ and $\epsilon\in\{0,1\}$, and by \cite[proof of Lemma 1]{HP55a}, the function $f:\Q_{2^n}\rightarrow\Q_{2^n}$ with the same definition as in formula (\ref{hpEvenEq}) is a complete mapping of $\Q_{2^n}$. A verbatim argument to the one for dihedral groups above shows that $f$ is also an even permutation of $\Q_{2^n}$, as required.
\item For statement (4): Again, write the elements of $\SD_{2^n}$ in normal form as $x^{\ell}y^{\epsilon}$ with $\ell\in\{0,1,\ldots,2^{n-1}-1\}$ and $\epsilon\in\{0,1\}$. The function $f:\SD_{2^n}\rightarrow\SD_{2^n}$ with the same definition as in formula (\ref{hpEvenEq}) is a complete mapping of $\SD_{2^n}$ by \cite[proof of Lemma 1]{HP55a}, and it is an even permutation of $\SD_{2^n}$ by the same argument as for dihedral groups.

To see that $\SD_{2^n}$ also has an odd complete mapping, set $g:=\tilde{f}:z\mapsto zf(z)$, the associated orthomorphism of $f$. In Appendix A, we check that $g$ is an even permutation of $\SD_{2^n}$. But the inversion function of $\SD_{2^n}$ is odd; its fixed points are precisely those group elements $x^{\ell}y^{\epsilon}$ where
\begin{itemize}
\item $\epsilon=0$ and $\ell\in\{0,2^{n-2}\}$, or
\item $\epsilon=1$ and $2\mid\ell$,
\end{itemize}
whence $\inv$ has exactly $\frac{2^n-(2^{n-2}+2)}{2}=3\cdot 2^{n-3}-1$ transpositions. It follows by fact (3) that $g\circ\inv$ is an odd complete mapping of $\SD_{2^n}$, as required.
\item For statement (5): We can show that $\M_{2^n}$ for $n\geq4$ is harmonious (and thus has an odd complete mapping) -- see Appendix B. In Appendix C, we prove that for each even positive integer $k$, a certain function $f:\M_{16k}\rightarrow\M_{16k}$, defined in Table \ref{modularTable}, is an even complete mapping of $\M_{16k}$. In particular, $\M_{2^n}$ has an even complete mapping for all $n\geq5$. In order to see that $\M_{16}$ also has an even complete mapping, we refer to Appendix D, where the parities of complete mappings of groups of order $16$ are discussed.
\end{itemize}
\end{proof}

\begin{proposition}\label{orders16and32Prop}
All noncyclic finite groups of order $16$ or $32$ satisfy property (P).
\end{proposition}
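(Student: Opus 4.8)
The plan is to strip away every group already covered by the results proved above, then reduce the rest as far as possible by the inductive Lemma~\ref{mainLem}, leaving a small residue for direct, computer-assisted verification.

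First I would dispose of the structurally familiar cases. Among the $13$ noncyclic groups of order $16$, the five with a cyclic subgroup of index $2$, namely $\AC_{16}$, $\D_{16}$, $\Q_{16}$, $\SD_{16}$ and $\M_{16}$, satisfy property (P) by Proposition~\ref{specialCasesProp}, and the elementary abelian group $C_2^4\cong\IF_{16}$ does so by Theorem~\ref{mainTheo1}: there $P_{\comp}(\IF_{16})=\Sym(\IF_{16})$ forces an odd complete mapping to exist, while any Singer cycle (a $15$-cycle fixing $0$) is an even one. The same two results handle, among the $50$ noncyclic groups of order $32$, the five with a cyclic index-$2$ subgroup together with $C_2^5\cong\IF_{32}$.

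Next I would push Lemma~\ref{mainLem} as hard as possible in the order-$32$ layer. Since a nontrivial $2$-group quotient admits a complete mapping precisely when it is noncyclic, and since by Proposition~\ref{specialCasesProp} the only order-$8$ groups with property (P) are $C_4\times C_2=\AC_8$ and $\D_8$, the lemma applies to an order-$32$ group $G$ as soon as $G$ has a normal subgroup $N\cong C_4\times C_2$ or $N\cong\D_8$ with $G/N\cong C_2^2$. Running through the $\SmallGroup$ library of order $32$, I would flag exactly the groups admitting such an $N$ (these include all direct products $(C_4\times C_2)\times C_2^2$ and $\D_8\times C_2^2$, and many further extensions), each of which is then settled immediately.

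For the groups surviving both reductions I would exhibit, by computer search, one even and one odd complete mapping and read off the parity from the cycle type. For order $16$ this step is unavoidable: every normal subgroup of order $8$ has quotient the cyclic group $C_2$, which has no complete mapping, so Lemma~\ref{mainLem} gives no leverage whatsoever, and each order-$16$ group outside the list above must be treated individually; these parity computations are precisely what Appendix~D collects. The order-$32$ groups left over after the previous paragraph are dispatched the same way. The main obstacle is therefore twofold: the large number of isomorphism types, and the fact that the inductive machinery is powerless in the order-$16$ layer, where every quotient of the right size is cyclic. Consequently the proposition rests, for order $16$, entirely on the explicit parity analysis of Appendix~D, and I expect this organized case-by-case bookkeeping of finitely many explicit complete mappings, rather than any single conceptual difficulty, to be where the real effort lies.
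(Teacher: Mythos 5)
Your proposal is correct, and its overall skeleton coincides with the paper's proof: dispose of the groups with a cyclic index-$2$ subgroup via Proposition \ref{specialCasesProp} and of the elementary abelian ones via Theorem \ref{mainTheo1}; observe that in order $16$ Lemma \ref{mainLem} is powerless (any order-$8$ normal subgroup has quotient $C_2$, and no group of order $\leq 4$ satisfies property (P)), so that layer must be settled by explicit computation, which is exactly Appendix D; and in order $32$ reduce via Lemma \ref{mainLem} to normal subgroups $N\cong C_4\times C_2$ or $\D_8$ with $G/N\cong C_2^2$. The one genuine divergence is your treatment of the residue in order $32$. The paper's GAP check shows the normal-subgroup reduction covers all noncyclic $\SmallGroup(32,i)$ except $i\in\{2,6,16,\ldots,20,51\}$, and after the familiar cases only $i=2$ ($C_4\ltimes(C_4\times C_2)$) and $i=6$ ($C_4\ltimes C_2^3$) remain; for these the paper does \emph{not} search for complete mappings. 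Instead it invokes the stronger Lemma \ref{mainLemVar}: it exhibits a (not necessarily normal) subgroup $H\cong C_4\times C_2$, which satisfies property (P), together with a simultaneous left-and-right transversal $U$ and an explicit permutation $\Theta$ of $U$ fulfilling the coset conditions, so that property (P) lifts from $H$ to $G$. Your alternative -- exhibiting one even and one odd complete mapping of each of these two groups by computer search -- would certainly succeed, since the groups do have complete mappings of both parities and such searches are feasible at order $32$ (though naive random sampling has success probability roughly $32!/32^{32}\approx 10^{-13}$, so a backtracking construction rather than pure rejection sampling is advisable; compare Schimanski's exhaustive data for $\IF_{32}$). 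What the paper's route buys is twofold: it keeps all order-$32$ verifications at the level of structural checks on the Small Groups Library rather than permutation searches, and it exhibits the purpose of proving Lemma \ref{mainLemVar} in its transversal generality rather than only the normal-subgroup special case, Lemma \ref{mainLem}, which, as you correctly note, cannot reach these two groups.
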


\begin{proof}
First, observe that the two order $8$ groups $\AC_8=C_4\times C_2$ and $\D_8$ satisfy property (P) -- they are covered in the proof of Proposition \ref{specialCasesProp}(1,2). As for noncyclic groups of order $16$ (not already covered by previous arguments), we verified that they all satisfy property (P) with a random search algorithm implemented in GAP \cite{GAP4}, see Appendix D.

Finite groups of certain, in particular all \enquote{small} orders, are implemented in GAP \cite{GAP4} through the Small Groups Library constructed by Besche, Eick and O'Brien; see their paper \cite{BEO02a} for a survey of the history of classifying finite groups of particular orders. The groups of order $32$ are listed in this library as $\SmallGroup(32,i)$ where $i\in\{1,2,\ldots,51\}$, with $C_{32}=\SmallGroup(32,1)$.

Using GAP, we verified that if $G=\SmallGroup(32,i)$ is a noncyclic finite group of order $32$ with $i\notin\{2,6,16,17,18,19,20,51\}$, then $G$ has a normal subgroup $N$ isomorphic to $C_4\times C_2$ or $\D_8$ such that $G/N\cong C_2^2$, whence $G$ satisfies property (P) by Lemma \ref{mainLem}.

With regard to the remaining eight groups $G=\SmallGroup(32,i)$, we note the following:
\begin{itemize}
\item For $i=51$, we have $G\cong C_2^5$, which satisfies property (P) because it has an odd complete mapping by \cite[table on pages 82--91]{Sch16a}, while any Singer cycle of it is an even complete mapping.
\item For $i=16,17,18,19,20$, one has $G\cong\AC_{32},\M_{32},\D_{32},\SD_{32},\Q_{32}$ respectively, and these are all covered by Proposition \ref{specialCasesProp}.
\item For $i=2$, one has
\begin{align*}
G &\cong C_4\ltimes (C_4\times C_2) \\
&=\langle x,y,z: x^4=y^4=z^2=y^{-1}z^{-1}yz=1, x^{-1}yx=yz, x^{-1}yx=y\rangle.
\end{align*}
Note that the generator $x$ of the outer $C_4$ acts on the normal subgroup $\langle y,z\rangle\cong C_4\times C_2$ by an automorphism $\alpha$ of order $2$, not $4$ (taking a semidirect product with an order $4$ automorphism actually leads to another representation of the last exceptional group, with $i=6$, which will be discussed in the next bullet point). Set $H:=\langle x,y^2\rangle$, another subgroup of $G$ isomorphic to $C_4\times C_2$. Moreover, set $U:=\{1,y,z,yz\}$. Since the elements of $G$ can be written as $x^ay^bz^c$ with $a,b\in\{0,1,2,3\}$ and $c\in\{0,1\}$, we can write $G=H\cdot U$ as a product set. Moreover, observing that
\[
U^{\alpha}=\{1^{\alpha},y^{\alpha},z^{\alpha},(yz)^{\alpha}\}=\{1,yz,z,y\}=U,
\]
we find that
\[
U\cdot H=U\cdot\langle x\rangle\cdot\langle y^2\rangle=\langle x\rangle\cdot U\cdot\langle y^2\rangle=\langle x\rangle\cdot\langle y^2\rangle\cdot U=H\cdot U=G,
\]
whence $U$ is a left and right transversal of $H$ in $G$. Now, consider the permutation $\Theta$ of $U$ with
\begin{equation}\label{ThetaEq}
\Theta(1)=1,\Theta(y)=z,\Theta(z)=yz,\Theta(yz)=y.
\end{equation}
Since
\[
1\Theta(1)H=H, y\Theta(y)H=yzH, z\Theta(z)H=yz^2H=yH, yz\Theta(yz)H=y^2zH=zH,
\]
and since $H\cong C_4\times C_2=\AC_8$ satisfies property (P), we infer from Lemma \ref{mainLemVar} that $G$ satisfies property (P), as we needed to show.
\item For $i=6$, one has
\begin{align*}
G &\cong C_4\ltimes C_2^3 \\
&=\langle x,y,z,t: x^4=y^2=z^2=t^2=y^{-1}z^{-1}yz=y^{-1}t^{-1}yt=z^{-1}t^{-1}zt=1, \\
&x^{-1}yx=z, x^{-1}zx=t, x^{-1}tx=yzt\rangle.
\end{align*}
Set $H:=\langle x,yt\rangle$, and note that $H$ is a subgroup of $G$ isomorphic to $C_4\times C_2$. Moreover, set $K:=\langle y,z\rangle$, which is a subgroup of $G$ isomorphic to $C_2^2$. It is not hard to see that $G=H\cdot K$, and since $H$ and $K$ are closed under taking inverses, we also get
\[
G=G^{-1}=(H\cdot K)^{-1}=K^{-1}\cdot H^{-1}=K\cdot H.
\]
Hence $H$ and $K$ are left and right transversals of each other. The permutation $\Theta$ of $K=\{1,y,z,yz\}$ given by the same formulas as in (\ref{ThetaEq}) has the property that $\{g\Theta(g): g\in K\}$ is a left transversal of $H$ in $G$, and we conclude that $G$ has complete mappings of both parities by the same argument as for $i=2$ (previous bullet point).
\end{itemize}
\end{proof}

\begin{proof}[Proof of Theorem \ref{mainTheo4}(2)]
We will prove, by induction on $n\geq4$, that every finite noncyclic group $S$ of order $2^n$ satisfies property (P). By Proposition \ref{orders16and32Prop}, this is true if $n\in\{4,5\}$, so we may assume that $n\geq6$. We distinguish two cases:
\begin{enumerate}
\item Case: $S$ is abelian. Write $S=\prod_{i=1}^k{C_{2^{e_i}}}$ with $e_1\geq e_2\geq\cdots\geq e_k\geq1$. Note that we may assume $e_1>1$, since otherwise, $S$ is elementary abelian (a case covered by Theorem \ref{mainTheo1}). If $k\geq3$, then $S$ has a normal subgroup
\[
N\cong C_{2^{e_1-1}}\times C_{2^{e_2-1}}\times\prod_{i=3}^k{C_{2^{e_i}}},
\]
which is noncyclic as it as has at least two nontrivial cyclic factors ($C_{2^{e_1-1}}$ and $C_{2^{e_3}}$). Moreover, $|N|=2^{n-2}\geq 2^4$, so $N$ satisfies property (P) by the induction hypothesis, and $S/N\cong C_2^2$ has a complete mapping. Lemma \ref{mainLem} lets us conclude that $S$ satisfies property (P), as required.

It remains to discuss $k=2$. If $e_2>1$, then $S$ has a normal subgroup $N\cong C_{2^{e_1-1}}\times C_{2^{e_2-1}}$, and an analogous argument as for $k\geq3$ works. And if $e_2=1$, then $S$ is isomorphic to $\AC_{2^n}$ and is covered by Proposition \ref{specialCasesProp}(1).
\item Case: $S$ is nonabelian. We follow the argument given by Hall and Paige in \cite[proof of Theorem 4]{HP55a}. We may assume that $S$ does not have a cyclic subgroup of index $2$, because otherwise, $S$ is isomorphic to one of $\D_{2^n}$, $\Q_{2^n}$, $\SD_{2^n}$ or $\M_{2^n}$ by \cite[result 5.3.4 on p.~141]{Rob96a}, and those groups are covered by Proposition \ref{specialCasesProp}(2--5). In particular, $S$ is not dicyclic and thus contains more than one element of order $2$. Let $x$ be a central element of order $2$ in $S$, and let $y\not=x$ be another (possibly also central) element of order $2$. Set $V:=\langle x,y\rangle\cong C_2^2$.

If $V$ is contained in two distinct maximal subgroups $M_1$ and $M_2$ of $S$ (which are of index $2$ in $S$ since $S$ is a finite $2$-group), then $K:=M_1\cap M_2$ is a normal subgroup of $S$ with $K\geq V$ (whence $K$ is noncyclic) and $S/K\cong C_2^2$. By the induction hypothesis, $K$ satisfies property (P), and so does $S$ by Lemma \ref{mainLem}.

Now assume that $V$ is contained in a unique maximal subgroup $M_1$ of $S$. Since $S$ is noncyclic, it has another maximal subgroup, say $M_2$, and if $M_1\cap M_2$ is noncyclic, the same argument as in the last paragraph works. We may thus assume that $M_1\cap M_2$ is cyclic. Following Hall and Paige's argument further (for which the assumption that $S$ does not contain a cyclic subgroup of index $2$ is crucial), we see that $S$ has a noncyclic index $4$ subgroup $H$ with a left and right transversal $u_1,u_2,u_3,u_4$ such that $u_1\in H$ and there are permutations $S,T\in\Sym(4)$ such that $S(1)=T(1)=1$ and $u_iu_{S(i)}H=u_{T(i)}H$ for all $i\in\{1,2,3,4\}$. Since $H$ satisfies property (P) by the induction hypothesis, so does $S$ by Lemma \ref{mainLemVar}.
\end{enumerate}
\end{proof}

As was explained at the beginning of this section, Theorem \ref{mainTheo4}(3) follows from Theorem \ref{mainTheo4}(2), and so the proof of our main results is now complete.

\section{Concluding remarks}\label{sec5}

\subsection{Orthomorphisms, the alternating group, and known results}\label{subsec5P1}

The aim of this subsection is to discuss to what extent known results could be used to show for certain dimensions $n$ that the group $P_{\comp}(\IF_2^n)=P_{\orth}(\IF_2^n)$ contains $\Alt(\IF_2^n)$. More specifically, we will argue that it is \emph{not} an obvious consequence of known results that this holds for infinitely many $n$ (as either of our Theorems \ref{mainTheo1} and \ref{mainTheo2} implies).

Since $P_{\orth}(\IF_2^n)$ is defined as the group generated by all orthomorphisms of $\IF_2^n$, in order to conclude that $\Alt(\IF_2^n)\leq P_{\orth}(\IF_2^n)$ for a given $n$, it suffices if $\Alt(\IF_2^n)$ is contained in the group generated by the round functions of some cipher over $\IF_2^n$ all of whose round functions are orthomorphisms. Now, the papers known to the authors that contain a result which implies that $\Alt(\IF_2^n)$ is contained in the group generated by the round functions of a certain cipher or class of ciphers over $\IF_2^n$ are \cite{ACTT18a,ACDS14a,ACS17a,CDS09a,EG83a,HSW94a,MPW94a,SW08a,SW15a,Wer93a}. Of those papers, \cite{HSW94a,SW15a,Wer93a} deal with ciphers in specific dimensions, so we ignore them in this discussion.

A noteworthy restriction which the remaining seven papers have in common is that none of them cover prime dimensions $n$. This is due to the nature of the studied ciphers, which require one to divide the input string into several segments of equal length larger than $1$. With regard to those dimensions that are covered, we make the following observations:
\begin{itemize}
\item The papers \cite{ACTT18a} and \cite{ACDS14a} contain similar results for so-called translation based (tb) ciphers over $\IF_2^n$. They distinguish between round functions occurring in different rounds and consider the group $\Gamma_h(\Cscr)$ generated by the round functions from the $h$-th round of the tb cipher $\Cscr$, which are of the form $\gamma\lambda\rho(v)$ where $\gamma=\gamma_h$ is a so-called \emph{bricklayer transformation} (a segment-wise application of permutations), $\lambda=\lambda_h$ is a linear permutation of $\IF_2^n$, and $\rho(v):x\mapsto x+v$ is the additive translation by $v\in\IF_2^n$ (here, we use our notation $\rho$ for the (right-)regular representation of $\IF_2^n$ on itself; in the notation of both \cite{ACTT18a} and \cite{ACDS14a}, $\rho(v)$ would be written $\sigma_v$). Moreover, note that the multiplication of permutations used here is function composition in the reverse order compared to $\circ$, so $\gamma\lambda\rho(v)=\rho(v)\circ\lambda\circ\gamma$ (apply $\gamma$ first, then $\lambda$, then $\rho(v)$). The results \cite[Theorems 4.6 and 4.7]{ACTT18a} and \cite[Theorem 4.5]{ACDS14a} state that if no sum of subspaces of $\IF_2^n$ corresponding to the chosen vector segmentation other than $\{0\}$ and $\IF_2^n$ is invariant under $\lambda$ (in which case $\lambda$ is called a \emph{proper mixing layer}), and if the segment-wise permutations of which $\gamma$ consists (the so-called \emph{bricks} of $\gamma$) satisfy certain cryptographic assumptions which intuitively mean that those bricks are \enquote{far away from being linear}, then $\Gamma_h(\Ccal)=\langle \gamma\lambda\rho(v): v\in\IF_2^n\rangle=\langle\gamma\lambda,\rho(\IF_2^n)\rangle$ equals $\Alt(\IF_2^n)$. If $\gamma\lambda$ is an orthomorphism of $\IF_2^n$, then so is $\gamma\lambda\rho(v)$ for any $v\in\IF_2^n$, so in order to conclude that $\Alt(\IF_2^n)\leq P_{\orth}(\IF_2^n)$, one would need to find $\gamma$ and $\lambda$ satisfying the assumptions of one of those theorems such that $\gamma\lambda$ is an orthomorphism of $\IF_2^n$. It does not appear to be obvious whether this is possible.
\item The paper \cite{ACS17a} discusses so-called \enquote{GOST-like} ciphers. Assume that $n$ is even. View $\IF_2^n$ as the direct product $\IF_2^{n/2}\times\IF_2^{n/2}$. On each copy of $\IF_2^{n/2}$, consider two different group structures:
\begin{itemize}
\item the underlying additive group structure of its vector space structure, the group operation of which is denoted by $+$, and
\item the cyclic additive group structure that stems from viewing each element of $\IF_2^{n/2}$, which is literally a bit string, as the binary representation of a number in $\{0,1,\ldots,2^{n/2}-1\}$, and adding those numbers modulo $2^{n/2}$. The corresponding group operation is written $\oplus$, and the element of $\IF_2^{n/2}$ corresponding to $k\in\{0,1,\ldots,2^{n/2}-1\}$ is denoted by $\overline{k}$.
\end{itemize}
The round functions of the GOST-like cipher over $\IF_2^n$ considered by the authors of \cite{ACS17a} are described in \cite[top of page 5]{ACS17a}. They are of the form
\[
R_{\vec{h},\vec{k}}:(v_{\ell},v_r)\mapsto(v_r\oplus\overline{k_2}\oplus\overline{h_1},((v_{\ell}\oplus\overline{k_1})+S(v_r\oplus\overline{k_2}))\oplus\overline{h_2})
\]
for each $v=(v_{\ell},v_r)\in\IF_2^n$, where the $h_i$ and $k_i$ are numbers in $\{0,1,\ldots,2^{n/2}-1\}$, and $S$ is a permutation of $\IF_2^{n/2}$ that is fixed with the cipher (i.e., it does not vary with $R_{\vec{h},\vec{k}}$) and is of the form $\gamma R_r$ where $\gamma$ is a bricklayer transformation and $R_r$ is the cyclic right rotation by $r$ bits on $\IF_2^{n/2}$. In \cite[end of Section 2]{ACS17a}, the round functions of a cipher called GOST by the authors are specified, and they correspond to the special case $k_1=h_2=0$, $k_2=k\in\{0,1,\ldots,2^{n/2}-1\}$ and $h_1=2^{n/2}-k$. It is noteworthy that this version of GOST appears to differ from the one described in \cite[Section 3]{MM14a}, and unlike the one in \cite[Section 3]{MM14a}, it is \emph{not} a Feistel cipher, as its round functions are not of the format $R_{\kappa}$ described in our Introduction. It is, however, not hard to check that the round functions of GOST as defined in \cite[end of Section 2]{ACS17a} all are orthomorphisms of $\IF_2^n$ regardless.

However, the more general round functions of the GOST-like cipher that are shown in \cite[Theorem 3.1]{ACS17a} to generate $\Alt(\IF_2^n)$ under certain assumptions are \emph{not} all orthomorphisms. In fact, if $h_1=h_2=k_1=0$ and $h_2=1$, then
\[
R_{\vec{h},\vec{k}}(v_{\ell},v_r)=(v_r\oplus \overline{1},v_{\ell}+S(v_r\oplus \overline{1})),
\]
and
\[
\widetilde{R_{\vec{h},\vec{k}}}(v_{\ell},v_r)=(v_{\ell}+(v_r\oplus \overline{1}),v_{\ell}+v_r+S(v_r\oplus \overline{1})).
\]
Observe that $\widetilde{R_{\vec{h},\vec{k}}}$ is a permutation of $\IF_2^{n/2}$ if and only if the function
\[
f_S:\IF_2^{n/2}\rightarrow\IF_2^{n/2}, v\mapsto v+(v\oplus \overline{1})+S(v\oplus \overline{1}),
\]
is injective. Indeed, if $f_S$ is injective, then the inverse function of $\widetilde{R_{\vec{h},\vec{k}}}$ is $(w_{\ell},w_r)\mapsto(w_{\ell}+(f_S^{-1}(w_{\ell}+w_r)\oplus\overline{1}),f_S^{-1}(w_{\ell}+w_r))$, and if, conversely, $\widetilde{R_{\vec{h},\vec{k}}}$ is injective, then so is the function $\IF_2^{n/2}\rightarrow\IF_2^{n/2}$, $v\mapsto\widetilde{R_{\vec{h},\vec{k}}}(v\oplus\overline{1},v)$, which implies that $f_S$ is injective. Using GAP \cite{GAP4}, we verified that in the smallest case for $n$ according to the restrictions of \cite[Theorem 3.1]{ACS17a}, namely $n=16$ (and $\delta=4$, $m=2$ in the notation of \cite[Theorem 3.1]{ACS17a}; note that our $n$ would be $2n$ in that notation), the function $f_S$ is \emph{not} injective for any of the valid choices for $S$ in \cite[Theorem 3.1]{ACS17a}. Therefore, at least for $n=16$, \cite[Theorem 3.1]{ACS17a} cannot be used to infer that $\Alt(\IF_2^n)\leq P_{\orth}(\IF_2^n)$. It would be interesting to generalize this observation to arbitrary $n$ (of the form required for \cite[Theorem 3.1]{ACS17a}).
\item The paper \cite{CDS09a} is similar to \cite{ACTT18a} and \cite{ACDS14a} in that it considers ciphers where the round functions are of the form $\gamma\lambda\rho(v)$ for a fixed bricklayer transformation $\gamma$ and proper mixing layer $\lambda$, with $v\in\IF_2^n$ variable. Unlike for a tb cipher, it is not demanded here that round functions in successive rounds are linked via a key scheduling function, but this does not affect the group generated by the round functions, which is still $\langle \gamma\lambda\rho(v): v\in\IF_2^n\rangle$ and is shown in \cite[Theorem 2]{CDS09a} to be equal to $\Alt(\IF_2^n)$ under certain cryptographic assumptions on $\gamma$ and $\lambda$. As for \cite{ACTT18a} and \cite{ACDS14a} above, it is not clear whether there exist $\gamma$ and $\lambda$ satisfying these assumptions such that additionally, $\gamma\lambda$ is an orthomorphism.
\item The two kinds of ciphers considered in \cite{EG83a}, one of which is associated with so-called \emph{DES-like functions}, the other with \emph{2-restricted DES-like functions}, both are Feistel ciphers in which not all Feistel transformations are bijective. Hence in both cases, not all round functions are orthomorphisms.
\item Not all round functions $E_k$ of the (intentionally) weak cipher constructed in \cite{MPW94a} are orthomorphisms. For example, $E_0$ is equal to the permutation $\theta$ defined in \cite[beginning of Section 2]{MPW94a}, and $\tilde{\theta}(1)=3=\tilde{\theta}(2^{n-1}+1)$.
\item At a superficial glance, the approach in \cite{SW08a} may look like another variation of \cite{ACTT18a}, \cite{ACDS14a} or \cite{CDS09a}, as it is also concerned with ciphers where the round functions are of the form $\gamma\lambda\rho(v)$ for a fixed bricklayer transformation $\gamma$ and a fixed linear permutation $\lambda$. However, the conditions imposed in \cite[Theorem 3]{SW08a} in order to ensure that the group generated by these round functions is the alternating group are of a different nature and concern that group itself, rather than being direct restrictions on $\gamma$ and $\lambda$. As for \cite{ACTT18a}, \cite{ACDS14a} and \cite{CDS09a}, it is not clear whether $\gamma$ and $\lambda$ can be chosen such that these conditions hold and, additionally, $\gamma\lambda$ is an orthomorphism. It would be interesting to check whether there are examples among the (dual) Rijndael-like functions considered in \cite[Sections 4 and 5]{SW08a}. A random search algorithm which we implemented in GAP \cite{GAP4} did not find any evidence that not all round functions of the Rijndael cipher over $\IF_{2^8}^{4\cdot 4}\cong\IF_2^{128}$ discussed in \cite[Section 5]{SW08a}, and defined in detail in \cite[Section 3.4]{DR02a}, are complete mappings, but due to the size of the domain of definition, it is impossible to carry out a comprehensive brute-force investigation with a computer.
\end{itemize}

\subsection{Parity types of orthogonal Latin squares based on complete mappings}\label{subsec5P2}

Let $X$ be a finite set of size $n$. A \emph{Latin square over $X$} is an $(n\times n)$-matrix $L=(\ell_{i,j})_{1\leq i,j\leq n}$ with entries in $X$ such that each element of $X$ occurs exactly once in each column and in each row of $X$. Two Latin squares over $X$ are called \emph{orthogonal} if in the superposition of the two, each ordered pair from $X^2$ occurs exactly once.

In \cite{Man42a}, Mann gave a useful algebraic characterization of orthogonality of Latin squares, which we will now explain. For this, assume that we fix a linear ordering of the elements of $X$, allowing us to list them as $x_1,x_2,\ldots,x_n$. Then a Latin square $L=(\ell_{i,j})_{1\leq i,j\leq n}$ over $X$ is completely described by its $n$ \emph{row permutations}, i.e., the permutations $P_1,P_2,\ldots,P_n\in\Sym(X)$ such that for $1\leq i,j\leq n$, one has $\ell_{i,j}=P_i(x_j)$. This allows us to identify $L$ with the $n$-tuple $(P_1,P_2,\ldots,P_n)\in\Sym(X)^n$. Note that not all elements of $\Sym(X)^n$ correspond to Latin squares, but they do always correspond to an $(n\times n)$-matrix over $X$ such that each element of $X$ occurs exactly once in each row. Mann calls such a matrix an \emph{$n$-sided square over $X$}, and he defines a product of $n$-sided squares over $X$ as follows: Let $L_1$ and $L_2$ be $n$-sided squares over $X$, with associated permutation tuples $(P_1,P_2,\ldots,P_n)$ and $(Q_1,Q_2,\ldots,Q_n)$. The \emph{product of $L_1$ and $L_2$}, another $n$-sided square over $X$ written $L_1L_2$, is the $n$-sided square over $X$ corresponding to the tuple $(P_1\circ Q_1,P_2\circ Q_2,\ldots,P_n\circ Q_n)$. Here is Mann's aforementioned algebraic characterization of orthogonality of Latin squares:

\begin{theoremm}\label{mannTheo}(Mann, \cite[Theorem 1 with $r=2$]{Man42a})
Two Latin squares $L_1$ and $L_2$ over $X$ are orthogonal if and only if there exists a Latin square $L_{1,2}$ over $X$ such that $L_2=L_1L_{1,2}$.
\end{theoremm}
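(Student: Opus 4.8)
The plan is to work entirely inside the row-permutation encoding and to collapse the biconditional into a single fixed-point count. First I would note that the $n$-sided squares over $X$, with the product defined above, form a group — it is just $\Sym(X)^n$ under componentwise composition. Hence, for $L_1\leftrightarrow(P_1,\dots,P_n)$ and $L_2\leftrightarrow(Q_1,\dots,Q_n)$, the equation $L_2=L_1L_{1,2}$ has the \emph{unique} solution $L_{1,2}\leftrightarrow(R_1,\dots,R_n)$ with $R_i=P_i^{-1}\circ Q_i$. So the existential statement ``there is a Latin square $L_{1,2}$ with $L_2=L_1L_{1,2}$'' is equivalent to ``this particular $L_{1,2}$ is Latin,'' and the theorem reduces to: $L_{1,2}$ is Latin if and only if $L_1$ and $L_2$ are orthogonal (written $L_1\perp L_2$).

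Next I would translate both properties into the language of sharply transitive sets. A tuple $(M_1,\dots,M_n)\in\Sym(X)^n$ is the row tuple of a Latin square exactly when $\{M_1,\dots,M_n\}$ is sharply transitive, that is, $M_{i'}^{-1}M_i$ is fixed-point-free for all $i\neq i'$ (the rows are automatically permutations, and the column condition is precisely sharp transitivity). Applied to $L_{1,2}$, this says $L_{1,2}$ is Latin iff $R_{i'}^{-1}R_i$ is fixed-point-free for all $i\neq i'$. For orthogonality, applying within each row $i$ the bijection $c\mapsto P_i(c)$ turns the superposition map $(i,c)\mapsto(P_i(c),Q_i(c))$ into $(i,a)\mapsto(a,S_i(a))$ with $S_i:=Q_i\circ P_i^{-1}$; hence $L_1\perp L_2$ iff $\{S_1,\dots,S_n\}$ is sharply transitive, i.e.\ $S_{i'}^{-1}S_i$ is fixed-point-free for all $i\neq i'$. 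Summing fixed points, I would record both failures as nonnegative integers
\[
N:=\sum_{i\neq i'}\fix\!\left(R_{i'}^{-1}R_i\right),\qquad N^{*}:=\sum_{i\neq i'}\fix\!\left(S_{i'}^{-1}S_i\right),
\]
so that $L_{1,2}$ is Latin iff $N=0$ and $L_1\perp L_2$ iff $N^{*}=0$.

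The heart of the proof is then the claim $N=N^{*}$, which I would prove term by term: for each ordered pair $i\neq i'$, $\fix(R_{i'}^{-1}R_i)=\fix(S_{i'}^{-1}S_i)$. Writing $g:=P_{i'}^{-1}P_i$ and $h:=Q_{i'}^{-1}Q_i$ — both fixed-point-free, since $L_1$ and $L_2$ are Latin — a short substitution ($z=P_i^{-1}(y)$, using $P_{i'}^{-1}=gP_i^{-1}$) shows that $\fix(S_{i'}^{-1}S_i)$ equals the number of points at which $g$ and $h$ agree, and I would reduce $\fix(R_{i'}^{-1}R_i)$ to a companion count of agreements. The hard part will be showing that these two agreement counts coincide: as elements of $\Sym(X)$ the words $R_{i'}^{-1}R_i=Q_{i'}^{-1}P_{i'}P_i^{-1}Q_i$ and $S_{i'}^{-1}S_i=P_{i'}Q_{i'}^{-1}Q_iP_i^{-1}$ are cyclic reversals of one another, and $\fix$ is \emph{not} reversal-invariant for general permutations (for instance $\fix(\alpha\beta\gamma)=\fix(\gamma\beta\alpha)$ can fail), so the identity must genuinely exploit the fixed-point-freeness of $g$ and $h$ — equivalently, the discordance of the relevant rows of $L_1$ and of $L_2$ — and I expect this matching step to demand the most care. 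Once it is in place, $N=N^{*}$ yields $N=0\iff N^{*}=0$, i.e.\ $L_{1,2}$ Latin $\iff L_1\perp L_2$, and both implications of Theorem~\ref{mannTheo} follow immediately.
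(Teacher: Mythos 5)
Your preliminary reductions are correct: the $n$-sided squares form the group $\Sym(X)^n$ under the componentwise product, so $L_{1,2}$ is forced to have row tuple $R_i=P_i^{-1}\circ Q_i$; Latin-ness of $(R_1,\ldots,R_n)$ is equivalent to $\fix(R_{i'}^{-1}R_i)=0$ for all $i\neq i'$; and orthogonality of $L_1,L_2$ is equivalent to $\fix(S_{i'}^{-1}S_i)=0$ for all $i\neq i'$, where $S_i=Q_i\circ P_i^{-1}$. (Note also that the paper gives no proof of this statement -- it is quoted from Mann's paper -- so your argument has to stand on its own.) The gap is precisely the step you single out as the heart of the proof, and it is not merely delicate: both the term-by-term identity $\fix(R_{i'}^{-1}R_i)=\fix(S_{i'}^{-1}S_i)$ and the weaker aggregate identity $N=N^{*}$ are \emph{false}, so the proposed matching step cannot be carried out by any amount of care.

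Here is a counterexample. Take $X=\{1,2,3,4\}$, write permutations in one-line notation, and let $L_1$ have row permutations $P_1=[2,1,4,3]$, $P_2=[1,2,3,4]$, $P_3=[3,4,1,2]$, $P_4=[4,3,2,1]$ (the Cayley table of the Klein four-group), and let $L_2$ have row permutations $Q_1=[1,4,2,3]$, $Q_2=[2,3,1,4]$, $Q_3=[3,1,4,2]$, $Q_4=[4,2,3,1]$; both are Latin squares. Then $R_1=P_1^{-1}Q_1=[2,3,1,4]=Q_2=R_2$, so $\fix(R_2^{-1}R_1)=\fix(\id)=4$, while $S_1=Q_1P_1^{-1}=[4,1,3,2]$ and $S_2=Q_2=[2,3,1,4]$ disagree everywhere, so $\fix(S_2^{-1}S_1)=0$. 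This happens even though $g=P_2^{-1}P_1=[2,1,4,3]$ and $h=Q_2^{-1}Q_1=[3,4,1,2]$ are both fixed-point-free -- and fixed-point-freeness of $g$ and $h$ is the \emph{only} consequence of Latin-ness that involves rows $i,i'$ alone, so no argument confined to a single pair of rows can establish your claimed identity. The aggregate version fares no better: in this example one computes $N=32$ while $N^{*}=8$. (Both are positive, so Mann's theorem itself -- which asserts only the equivalence $N=0\Leftrightarrow N^{*}=0$ -- is not contradicted; but that equivalence cannot be obtained by showing the two counts are equal, because they are not.) Any correct proof must therefore use the column-Latin-ness of $L_1$ and $L_2$ across all $n$ rows simultaneously and by some mechanism other than comparing collision counts; your two reformulations are the standard starting point, but the bridge between them is genuinely the content of Mann's theorem, not a fixed-point bookkeeping identity.
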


As a consequence of this theorem, Mann showed how one can associate with each complete mapping $f$ of a finite group $G=\{g_1,g_2,\ldots,g_n\}$ of order $n$ a pair of orthogonal Latin squares $A$ and $B=B_f$ over $G$. Denote by $\lambda:G\rightarrow\Sym(G)$ the left-regular representation of $G$ on itself, so that $\lambda(g)(h)=gh$ for all $g,h\in G$.
\begin{itemize}
\item Let $A$ be the Cayley table of $G$, which is the Latin square over $G$ corresponding to the row permutation tuple $(\lambda(g_i))_{i=1,2,\ldots,n}$ because $\lambda(g_i)(g_j)=g_ig_j$.
\item Let $C=C_f$ be the Latin square over $G$ corresponding to the row permutation tuple $(\lambda(f(g_i)))_{i=1,2,\ldots,n}$. Note that as a matrix, $C=(\lambda(f(g_i))(g_j))_{1\leq i,j\leq n}=(f(g_i)g_j)_{1\leq i,j\leq n}$, and $C$ is indeed a Latin square since $f$ is a permutation of $G$.
\item Finally, let $B=B_f:=AC_f$, which is the Latin square over $G$ corresponding to the row permutation tuple $(\lambda(g_i)\circ\lambda(f(g_i)))_{i=1,2,\ldots,n}=(\lambda(g_if(g_i)))_{i=1,2,\ldots,n}=(\lambda(\tilde{f}(g_i)))_{i=1,2,\ldots,n}$. This is indeed a Latin square over $G$ because $\tilde{f}$ is a permutation of $G$.
\end{itemize}

The approach of associating certain permutation tuples with a Latin square $L$ also allows one to assign certain parities to $L$ that have been studied by combinatorialists. For a finite set $X$, denote by $\pi:\Sym(X)\rightarrow\IF_2$ the function that maps each permutation to its parity (i.e., $\pi(f)=0$ if and only if $f$ is even). The following definition introduces the three parities discussed in \cite[Introduction]{FHW18a}:

\begin{deffinition}\label{parityDef}
Let $L=(\ell_{i,j})_{1\leq i,j\leq n}$ be a Latin square over the finite set $X=\{x_1,x_2,\ldots,x_n\}$ of size $n$.
\begin{enumerate}
\item As above, for $i=1,2,\ldots,n$, we denote by $P_i$ the \emph{$i$-th row permutation of $L$}, viz., the unique permutation in $\Sym(X)$ such that $P_i(x_j)=\ell_{i,j}$ for $j=1,2,\ldots,n$. The \emph{row parity of $L$}, written $\pi_r(L)$, is the $\IF_2$-sum of the parities of the $P_i$ for $i=1,2,\ldots,n$.
\item For $j=1,2,\ldots,n$, we denote by $Q_j$ the \emph{$j$-th column permutation of $L$}, viz., the unique permutation in $\Sym(X)$ such that $Q_j(x_i)=\ell_{i,j}$ for $i=1,2,\ldots,n$. The \emph{column parity of $L$}, written $\pi_c(L)$, is the $\IF_2$-sum of the parities of the $Q_j$ for $j=1,2,\ldots,n$.
\item For $x\in X$, we denote by $R_x$ the \emph{$x$-symbol permutation of $L$}, viz. the unique permutation in $\Sym(X)$ such that for all $1\leq i,j\leq n$, one has $R_x(x_i)=x_j$ if and only if $\ell_{i,j}=x$. The \emph{symbol parity of $L$}, written $\pi_s(L)$, is the $\IF_2$-sum of the parities of the $R_x$ for all $x\in X$.
\end{enumerate}
\end{deffinition}

The three parities $\pi_r(L)$, $\pi_c(L)$ and $\pi_s(L)$ are known for the $\IF_2$-equality
\[
\pi_r(L)+\pi_c(L)+\pi_s(L)={n \choose 2},
\]
which expresses each of them in terms of the respective other two. Numerous proofs of this fundamental relation have been discovered, see \cite[paragraph after formula (1.1)]{FHW18a}. The $\IF_2$-sum $\pi_r(L)+\pi_c(L)$ is also noteworthy due to being an isotopism invariant of Latin squares for even $n$, see \cite[Introduction]{Kot12a}. The authors of \cite{FHW18a} and \cite{Kot12a} define two different notions of the \emph{parity type} of a Latin square $L$ based on $\pi_r(L)$, $\pi_c(L)$ and $\pi_s(L)$:

\begin{deffinition}\label{parityTypeDef}
Let $L=(\ell_{i,j})_{1\leq i,j\leq n}$ be a Latin square over the finite set $X=\{x_1,x_2,\ldots,x_n\}$ of size $n$.
\begin{enumerate}
\item (\cite[Introduction]{FHW18a}) The \emph{Franceti{\'c}-Herke-Wanless (FHW) parity type of $L$}, written $\pi_{\mathrm{FHW}}(L)$, is the ordered triple $(\pi_r(L),\pi_c(L),\pi_s(L))$.
\item (\cite[Definition 1]{Kot12a}) The \emph{Kotlar (K) parity type of $L$}, written $\pi_{\mathrm{K}}(L)$, is the unique pair $(k,m)\in\{0,1,\ldots,\lfloor\frac{n}{2}\rfloor\}^2$ such that exactly $k$ of the row permutations, and exactly $m$ of the column permutations, of $L$ have a common parity.
\end{enumerate}
\end{deffinition}

The Kotlar parity type $\pi_{\mathrm{K}}$ is interesting because it is an isotopism invariant of Latin squares, see \cite[Proposition 2]{Kot12a}. Our goal in this subsection is to determine for each finite group $G$ the possible values of $\pi_{\mathrm{FHW}}(L)$ and $\pi_{\mathrm{K}}(L)$ where $L\in\{A,B_f,C_f:f\text{ is a complete mapping of }G\}$. We can deal uniformly with those three families of Latin squares:

\begin{nottation}\label{lhNot}
Let $G=\{g_1,g_2,\ldots,g_n\}$ be a finite group, and let $h\in\Sym(G)$. We denote by $L_h=L_h(G)$ the Latin square $(h(g_i)g_j)_{1\leq i,j\leq n}$ over $G$.
\end{nottation}

Note that $A=L_{\id}$, $B_f=L_{\tilde{f}}$ and $C_f=L_f$ for each complete mapping $f$ of $G$. The following proposition describes the row, column and symbol permutations of Latin squares of the form $L_h(G)$:

\begin{propposition}\label{lhProp}
Let $G=\{g_1,g_2,\ldots,g_n\}$ be a finite group, and let $h\in\Sym(G)$. Denote by $\lambda$ and $\rho$ the left- and right-regular representation of $G$ on itself respectively, and let $\inv:g\mapsto g^{-1}$ be the inversion function of $G$.
\begin{enumerate}
\item For $i=1,2,\ldots,n$, the $i$-th row permutation of $L_h(G)$ is $\lambda(h(g_i))$.
\item For $j=1,2,\ldots,n$, the $j$-th column permutation of $L_h(G)$ is $\rho(g_j)\circ h$.
\item For $x\in G$, the $x$-symbol permutation of $L_h(G)$ is $\rho(x)\circ\inv\circ h$.
\end{enumerate}
\end{propposition}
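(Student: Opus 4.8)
The plan is to prove each of the three parts by directly unwinding the definitions in Definition \ref{parityDef}, using the explicit matrix description $L_h(G) = (h(g_i)g_j)_{1\leq i,j\leq n}$ from Notation \ref{lhNot}. In every case I want to exhibit a permutation of $G$ and check that it sends the appropriate index to the appropriate entry, which will identify it with the relevant row, column, or symbol permutation. The computations are short once one is careful about the left/right conventions for $\lambda$ and $\rho$, namely $\lambda(g)(x)=gx$ and $\rho(g)(x)=xg$.

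First I would prove (1). The $i$-th row permutation $P_i$ is, by definition, the unique element of $\Sym(G)$ with $P_i(g_j)=\ell_{i,j}=h(g_i)g_j$ for all $j$. But $h(g_i)g_j = \lambda(h(g_i))(g_j)$, so $P_i$ and $\lambda(h(g_i))$ agree on every $g_j$, hence are equal. Next, for (2), the $j$-th column permutation $Q_j$ satisfies $Q_j(g_i)=\ell_{i,j}=h(g_i)g_j=\rho(g_j)(h(g_i))=(\rho(g_j)\circ h)(g_i)$ for all $i$, so $Q_j=\rho(g_j)\circ h$. Both of these are essentially immediate once the conventions are fixed; the only thing to be careful about is that the row index $i$ selects which group element $h$ is evaluated at, while the column index $j$ selects which element we multiply on the right by.

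The part requiring the most care is (3), the symbol permutation. By definition $R_x$ is the permutation with $R_x(g_i)=g_j$ precisely when $\ell_{i,j}=x$, i.e.\ when $h(g_i)g_j=x$. Solving for $g_j$ gives $g_j=h(g_i)^{-1}x=(\rho(x)\circ\inv)(h(g_i))=(\rho(x)\circ\inv\circ h)(g_i)$, so $R_x=\rho(x)\circ\inv\circ h$. I should double-check that this candidate is genuinely a permutation of $G$ (it is, being a composition of the bijections $h$, $\inv$, and $\rho(x)$) and that the defining relation is an ``if and only if'': since $h(g_i)g_j=x$ determines $g_j$ uniquely given $g_i$, and $R_x(g_i)$ is by definition that unique $g_j$, the equivalence holds. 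The main (mild) obstacle is simply bookkeeping with the inverse and the order of composition, since the symbol permutation inverts the group element $h(g_i)$ before translating by $x$, whereas the row and column permutations involve no inversion.

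Once all three candidate permutations are verified against their defining properties, the proposition follows. I do not anticipate needing any of the deeper results from earlier in the paper here; this is purely a matter of translating Definition \ref{parityDef} through the explicit entries of $L_h(G)$, and its role is to set up the subsequent parity computations for $\pi_r$, $\pi_c$, and $\pi_s$.
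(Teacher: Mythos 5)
Your proof is correct and follows essentially the same route as the paper's own argument: all three parts are verified by direct computation of the matrix entries $\ell_{i,j}=h(g_i)g_j$ against the definitions of the row, column, and symbol permutations, with part (3) resolved by solving $h(g_i)g_j=x$ for $g_j$. The extra remarks you include (that the candidate in (3) is a bijection, and that the defining relation is an equivalence) are fine but not needed beyond what the paper records.
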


\begin{proof}
The first statement holds because $\lambda(h(g_i))(g_j)=h(g_i)g_j$, the second because $(\rho(g_j)\circ h)(g_i)=\rho(g_j)(h(g_i))=h(g_i)g_j$, and the third because $h(g_i)g_j=x$ is equivalent to $g_j=h(g_i)^{-1}x=(\rho(x)\circ\inv\circ h)(g_i)$.
\end{proof}

For each element $g$ of a group $G$, the notation $\langle g\rangle$ denotes the cyclic subgroup of $G$ generated by $g$. The following elementary lemma is useful for determining the row, column and symbol parity of $L_h$ based on Proposition \ref{lhProp}:

\begin{lemmma}\label{lhLem}
Let $G$ be a finite group.
\begin{enumerate}
\item For each $g\in G$, we have
\[
\pi(\rho(g))=\pi(\lambda(g))=
\begin{cases}
1, & \text{if }\langle g\rangle\text{ contains a nontrivial Sylow 2-subgroup of }G, \\
0, & \text{otherwise}.
\end{cases}
\]
\item Denote by $\sigma(G)$ the $\IF_2$-sum of the parities $\pi(\lambda(g))$ for $g\in G$ (equivalently by statement (1), $\sigma(G)$ is the $\IF_2$-sum of the $\pi(\rho(g))$ for $g\in G$). Then
\[
\sigma(G)=
\begin{cases}
0, & \text{if }|G|\equiv 0,1,3\Mod{4}, \\
1, & \text{if }|G|\equiv 2\Mod{4}.
\end{cases}
\]
\end{enumerate}
\end{lemmma}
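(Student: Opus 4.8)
The plan is to reduce everything to the cycle structure of the regular representations and then to a parity count organized by the inversion map.

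For statement (1), I would first observe that $\lambda(g)$ and $\rho(g)$ have the same cycle type, and hence the same parity. Indeed, the orbits of $\langle g\rangle$ acting on $G$ by left (respectively right) multiplication are precisely the right (respectively left) cosets of $\langle g\rangle$, so each of $\lambda(g)$ and $\rho(g)$ is a product of $|G|/d$ disjoint $d$-cycles, where $d=\ord(g)=|\langle g\rangle|$. (Equivalently, $\inv\circ\lambda(g)\circ\inv=\rho(g^{-1})$ exhibits $\lambda$ and $\rho$ as conjugate.) Since a $d$-cycle is odd exactly when $d$ is even, this gives $\pi(\lambda(g))\equiv (|G|/d)(d-1)\Mod{2}$. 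Writing $|G|=2^a m$ and $d=2^b t$ with $m,t$ odd and, by Lagrange, $b\le a$, this parity vanishes when $d$ is odd ($b=0$, so $d-1$ is even), and otherwise equals $|G|/d=2^{a-b}(m/t)$ modulo $2$, which is $1$ precisely when $a=b$. Finally I would translate the condition $a=b\ge 1$: the Sylow $2$-subgroup of the cyclic group $\langle g\rangle$ has order $2^b$, and this is a nontrivial Sylow $2$-subgroup of $G$ exactly when $2^b=2^a>1$, i.e.\ $a=b\ge 1$, matching the asserted formula.

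For statement (2), set $S:=\{g\in G:\pi(\lambda(g))=1\}$, so that $\sigma(G)\equiv |S|\Mod{2}$ by statement (1). The key observation is that $S$ is invariant under $\inv$, since $\langle g\rangle=\langle g^{-1}\rangle$. Treating $\inv$ as an involution acting on the finite set $S$, its non-fixed points pair up, so $|S|$ is congruent modulo $2$ to the number of fixed points of $\inv$ in $S$, that is, to the number of \emph{involutions} lying in $S$ (the identity, of order $1$, is never in $S$). I would then split into cases on $|G|$ modulo $4$. If $|G|$ is odd then $a=0$, so $S=\emptyset$ and $\sigma(G)=0$. If $|G|\equiv 0\Mod{4}$ then $a\ge 2$, and by statement (1) every $g\in S$ satisfies $2^a\mid\ord(g)$, hence $\ord(g)\ge 4$; thus $S$ contains no involution and $|S|$ is even, giving $\sigma(G)=0$. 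If $|G|\equiv 2\Mod{4}$ then $a=1$, and an involution $g$ has $\langle g\rangle$ of order $2=2^a$, so every involution of $G$ lies in $S$; hence $|S|$ is congruent modulo $2$ to the total number of involutions of $G$.

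It then remains to record that a group of even order has an odd number of involutions, which I would deduce from the standard pairing argument applied to all of $G$: the elements of order $>2$ split into pairs $\{x,x^{-1}\}$, so $\#\{x\in G:x^2=1\}\equiv |G|\Mod{2}$, and since $\#\{x:x^2=1\}=1+\#\{\text{involutions}\}$, the number of involutions is odd whenever $|G|$ is even. This yields $\sigma(G)=1$ when $|G|\equiv 2\Mod{4}$ and completes the case analysis. I do not expect a genuine obstacle: the only points needing care are pinning down the cycle type of $\lambda(g)$ in statement (1) and correctly identifying when an involution's cyclic subgroup is a full Sylow $2$-subgroup, which is exactly the borderline case $|G|\equiv 2\Mod{4}$ responsible for the single nonzero value of $\sigma$.
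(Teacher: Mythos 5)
Your proof is correct, and statement (1) is handled essentially as in the paper: both arguments identify $\lambda(g)$ and $\rho(g)$ as products of $|G:\langle g\rangle|$ disjoint cycles of length $\ord(g)$ and read off the parity from the cycle type; your version merely makes the computation $(|G|/d)(d-1)\bmod 2$ explicit. For statement (2), however, you take a genuinely different route. The paper organizes the set $S$ of elements $g$ whose order is divisible by $|G|_2$ (the order of a Sylow $2$-subgroup) according to the cyclic subgroup $C=\langle g\rangle$ they generate, and sums the totients $\phi(|C|)$ over these subgroups: when $|G|\equiv 0\Mod{4}$ every $\phi(|C|)$ is even, while when $|G|\equiv 2\Mod{4}$ only the cyclic subgroups of order exactly $2$ (the Sylow $2$-subgroups) contribute odd terms, and their number is odd by Sylow's theorem. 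You instead exploit that $S$ is closed under inversion, pair $g$ with $g^{-1}$ to reduce $|S|\bmod 2$ to the number of involutions lying in $S$, and then settle the congruence cases directly: no involution can lie in $S$ when $4\mid|G|$, whereas every involution lies in $S$ when $|G|\equiv 2\Mod{4}$, at which point the classical pairing argument on all of $G$ (elements of order greater than $2$ pair with their inverses) shows the number of involutions is odd. Your route is somewhat more elementary, replacing Euler's totient and the Sylow counting theorem by two applications of the same inversion-pairing trick; the paper's is more structural, in that it pinpoints exactly which cyclic subgroups carry the parity. Both arguments are complete and yield the stated formula for $\sigma(G)$.
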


\begin{proof}
For statement (1): Note that both $\rho(g)$ and $\lambda(g)$ consist of $|G:\langle g\rangle|$ cycles that are all of length $|\langle g\rangle|=\ord(g)$. Hence, if $G$ is of odd order (i.e., $G$ has no nontrivial Sylow $2$-subgroups), then all cycles of $\rho(g)$ and $\lambda(g)$ are of odd length, whence $\pi(\rho(g))=\pi(\lambda(g))=0$. Now assume that $G$ has even order. If $\langle g\rangle$ does \emph{not} contain a Sylow $2$-subgroup of $G$, then each of $\lambda(g)$ and $\rho(g)$ consists of an even number of cycles of the same length, so $\pi(\lambda(g))=\pi(\rho(g))=0$ as well. And if $\langle g\rangle$ does contain a (nontrivial) Sylow $2$-subgroup of $G$, then each of $\lambda(g)$ and $\rho(g)$ consists of an odd number of cycles of the same even length, whence $\pi(\lambda(g))=\pi(\rho(g))=1$.

For statement (2): If $G$ is of odd order, then $\pi(\rho(g))=\pi(\lambda(g))=0$ for all $g\in G$ by statement (1), and the assertion is clear. If, on the other hand, $G$ is of even order, then we need to determine the parity of the number of elements $g\in G$ whose order is divisible by $|G|_2$, the full power of $2$ divisor of $|G|$. Note that these elements $g$ are characterized by being the generator of some cyclic subgroup $C=C(g)$ of $G$ of order divisible by $|G|_2$. But any two distinct such subgroups have disjoint sets of generators, so we can just add up the parities of the numbers of generators for each $C$. If $|G|\equiv0\Mod{4}$, then the number $\phi(|C|)$ of generators of each $C$ is even (since it is divisible by $\phi(|G|_2)$), whence $\sigma(G)=0$. And if $|G|\equiv2\Mod{4}$, then $\phi(|C|)$ is even if and only if $|C|>|G|_2$ (because odd prime power factors $p^k$ in $|C|$ contribute an even factor $p^{k-1}(p-1)$ to $\phi(|C|)$), so we only need to consider the contribution from the Sylow $2$-subgroups of $G$ (which are cyclic of order $2$). Each of these has exactly $1$ generator, and by the Sylow theorems, their number is odd, so we conclude that $\sigma(G)=1$.
\end{proof}

We are now ready to determine the three parities for Latin squares of the form $L_h(G)$. We formulate this result compactly by specifying the FHW parity type $\pi_{\mathrm{FHW}}(L_h)=(\pi_r(L_h),\pi_c(L_h),\pi_s(L_h))$:

\begin{theoremm}\label{fhwTheo}
Let $G$ be a finite group, and let $h\in\Sym(G)$. Then
\[
\pi_{\mathrm{FHW}}(L_h)=
\begin{cases}
(0,0,0), & \text{if }|G|\equiv0\Mod{4}, \\
(0,\pi(h),\pi(h)), & \text{if }|G|\equiv1\Mod{4}, \\
(1,1,1), & \text{if }|G|\equiv2\Mod{4}, \\
(0,\pi(h),\pi(h)+1), & \text{if }|G|\equiv3\Mod{4}.
\end{cases}
\]
\end{theoremm}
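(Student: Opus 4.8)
The plan is to reduce everything to the three explicit descriptions in Proposition \ref{lhProp} and then exploit that the parity map $\pi:\Sym(G)\rightarrow\IF_2$ is a group homomorphism, which turns each of the three defining $\IF_2$-sums into a quantity I can evaluate using Lemma \ref{lhLem}. Throughout, $n=|G|$ and all sums are read in $\IF_2$.

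First I would handle the row parity. By Proposition \ref{lhProp}(1) the $i$-th row permutation is $\lambda(h(g_i))$, so $\pi_r(L_h)=\sum_{i=1}^n\pi(\lambda(h(g_i)))$. Since $h$ is a bijection of $G$, as $i$ runs over $\{1,\ldots,n\}$ the element $h(g_i)$ runs over all of $G$; hence this sum equals $\sum_{g\in G}\pi(\lambda(g))=\sigma(G)$. In particular the row parity does not depend on $h$ at all. Next come the column and symbol parities, where I would use the homomorphism property to peel off the dependence on $h$. By Proposition \ref{lhProp}(2,3) we have $\pi(\rho(g_j)\circ h)=\pi(\rho(g_j))+\pi(h)$ and $\pi(\rho(x)\circ\inv\circ h)=\pi(\rho(x))+\pi(\inv)+\pi(h)$. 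Summing over the $n$ columns, respectively the $n$ symbols, and using $\pi(\rho(g))=\pi(\lambda(g))$ from Lemma \ref{lhLem}(1), this yields $\pi_c(L_h)=\sigma(G)+n\,\pi(h)$ and $\pi_s(L_h)=\sigma(G)+n\,\pi(\inv)+n\,\pi(h)$.

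Finally I would assemble the four congruence classes. Lemma \ref{lhLem}(2) supplies $\sigma(G)$: it is $1$ exactly when $|G|\equiv 2\Mod{4}$ and $0$ otherwise. When $n$ is even, the coefficients $n\,\pi(h)$ and $n\,\pi(\inv)$ vanish in $\IF_2$, so the cases $|G|\equiv 0\Mod{4}$ and $|G|\equiv 2\Mod{4}$ immediately give $(0,0,0)$ and $(1,1,1)$ respectively. When $n$ is odd we have $\sigma(G)=0$ and $n\equiv 1$ in $\IF_2$; moreover $G$ then has odd order, so the only solution of $g^2=1$ is $g=1$, whence $\inv$ fixes exactly one point and is a product of $(n-1)/2$ transpositions. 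Thus $\pi(\inv)=(n-1)/2\bmod 2$, which is $0$ if $n\equiv 1\Mod{4}$ and $1$ if $n\equiv 3\Mod{4}$. Substituting gives $(0,\pi(h),\pi(h))$ and $(0,\pi(h),\pi(h)+1)$ in those two cases, matching the claimed table.

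There is no serious obstacle beyond bookkeeping; the one mildly delicate point is noticing that the sole nontrivial computation, the parity of the inversion map $\inv$, is needed only in the odd-order case — because for even $n$ its coefficient $n$ is $0$ in $\IF_2$ — and that in the odd-order case it is elementary since $G$ has no involutions.
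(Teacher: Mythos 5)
Your proposal is correct and follows essentially the same route as the paper's own proof: both reduce to Proposition \ref{lhProp}, use the homomorphism property of $\pi$ to obtain $\pi_r(L_h)=\sigma(G)$, $\pi_c(L_h)=\sigma(G)+|G|\pi(h)$ and $\pi_s(L_h)=\sigma(G)+|G|\pi(\inv)+|G|\pi(h)$, and then evaluate $\sigma(G)$ via Lemma \ref{lhLem}(2) and $\pi(\inv)$ via the fixed-point/transposition count in the odd-order case. The only (harmless) difference is that you spell out the bijection argument for the row parity, which the paper leaves as "clear."
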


\begin{proof}
By Proposition \ref{lhProp}(1), it is clear that $\pi_r(L_h)=\sigma(G)$, so the first entry of $\pi_{\mathrm{FHW}}(L_h)$ assumes the asserted value in each case by Lemma \ref{lhLem}(2). Moreover, by Proposition \ref{lhProp}(2), we have $\pi_c(L_h)=\sigma(G)+|G|\cdot\pi(h)$, whence the second entries are clear by Lemma \ref{lhLem}(2) as well. Finally, by Proposition \ref{lhProp}(3), we have $\pi_s(L_h)=\sigma(G)+|G|\pi(\inv)+|G|\pi(h)$. Hence, if $|G|\equiv0,2\Mod{4}$, then $\pi_s(L_h)=\sigma(G)$, and so the third entry has the asserted value by Lemma \ref{lhLem}(2). And if $|G|\equiv1,3\Mod{4}$, then
\[
\pi_s(L_h)=\sigma(G)+\pi(\inv)+\pi(h)=\pi(\inv)+\pi(h).
\]
Since $|G|$ is odd, $\inv$ has exactly one fixed point and $\frac{|G|-1}{2}$ transpositions. Therefore, if $|G|\equiv1\Mod{4}$, then $\pi(\inv)=0$, and so $\pi_s(L_h)=\pi(h)$, whereas if $|G|\equiv3\Mod{4}$, then $\pi(\inv)=1$ and $\pi_s(L_h)=\pi(h)+1$.
\end{proof}

Finally, recalling from above that $A=L_{\id}$, $B_f=L_{\tilde{f}}$ and $C_f=L_f$, we obtain the following corollary:

\begin{corrollary}\label{fhwCor}
Let $G$ be a finite group. In dependency of $|G|$, the FHW parity types of the Cayley table $A$ of $G$ and of the Latin squares $B_f$ and $C_f$ for a complete mapping $f$ of $G$ are as in Table \ref{fhwTable} (note that if $|G|\equiv2\Mod{4}$, the Sylow $2$-subgroups of $G$ are cyclic, whence $G$ has no complete mappings).
\end{corrollary}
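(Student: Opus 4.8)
The plan is to read the corollary off Theorem \ref{fhwTheo} by specialization, exploiting the three identifications $A = L_{\id}$, $B_f = L_{\tilde f}$ and $C_f = L_f$ recorded just before Notation \ref{lhNot}. First I would substitute $h = \id$: since $\pi(\id) = 0$, Theorem \ref{fhwTheo} immediately gives the FHW parity type of the Cayley table $A$ in each congruence class, namely $(0,0,0)$ for $|G| \equiv 0,1 \Mod 4$, $(1,1,1)$ for $|G| \equiv 2 \Mod 4$, and $(0,0,1)$ for $|G| \equiv 3 \Mod 4$. This settles the $A$-column of Table \ref{fhwTable} with no further work.

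For $B_f$ and $C_f$ I would first note that a complete mapping exists only when $|G| \not\equiv 2 \Mod 4$ (as flagged in the statement), so that row is vacuous for these two squares, and that when $|G| \equiv 0 \Mod 4$ Theorem \ref{fhwTheo} returns $(0,0,0)$ independently of $\pi(h)$. Hence the parity type can vary with the choice of $f$ only in the odd-order cases, where substituting $h = f$ and $h = \tilde f$ yields $(0,\pi(f),\pi(f))$ and $(0,\pi(\tilde f),\pi(\tilde f))$ if $|G| \equiv 1 \Mod 4$, and $(0,\pi(f),\pi(f)+1)$ and $(0,\pi(\tilde f),\pi(\tilde f)+1)$ if $|G| \equiv 3 \Mod 4$.

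It then remains to determine which values of $\pi(f)$ and $\pi(\tilde f)$ are actually attained, and this is the only place where the application of Theorem \ref{mainTheo4} is needed. For odd $|G| > 3$ that theorem grants property (P), so $G$ has complete mappings of both parities; and since property (P) is equivalent to admitting orthomorphisms of both parities and $f \mapsto \tilde f$ is a bijection onto the orthomorphisms, $\tilde f$ also ranges over both parities. Thus both formulas above are realized for each value of their parity parameter, so both listed FHW types occur for $B_f$ and $C_f$ in each odd class with $|G| > 3$. I would finish by inspecting the two excluded small orders directly: for the trivial group the only complete mapping is $\id$, and for $G \cong \IZ/3\IZ$ Theorem \ref{mainTheo1} forces $f$ to be even while the associated orthomorphism $\tilde f$ is odd, which fixes the $|G| = 3$ entries.

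I do not anticipate a genuine obstacle: the bulk of the argument is substitution into Theorem \ref{fhwTheo}, whose proof has already absorbed the delicate bookkeeping of $\sigma(G)$ and of $\pi(\inv) = (|G|-1)/2 \bmod 2$. The only mild care required is in certifying that both parities are realizable rather than merely recording a conditional formula; here the one caveat is that Theorem \ref{mainTheo4}(1) covers odd orders only for $|G| > 3$, so the orders $|G| \in \{1,3\}$ have to be handled by hand as indicated.
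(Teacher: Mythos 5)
Your proof is correct and follows essentially the same route as the paper: the corollary is read off by substituting $h=\id$, $h=\tilde{f}$, $h=f$ into Theorem \ref{fhwTheo} via the identifications $A=L_{\id}$, $B_f=L_{\tilde{f}}$, $C_f=L_f$, with the rows $|G|=1$ and $|G|=3$ pinned down by the forced parities ($\pi(f)=0$ in both cases, and $\pi(\tilde{f})=1$ when $|G|=3$), exactly as you do. Your additional argument that both parities of $f$ and $\tilde{f}$ are actually attained for odd $|G|>3$ is not needed for the corollary itself, whose table entries are formulas in $\pi(f)$ and $\pi(\tilde{f})$; it reproduces the paper's remark immediately following the corollary.
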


\begin{table}[h]
\begin{center}
\begin{tabular}{|c|c|c|c|}\hline
$|G|$ & $\pi_{\mathrm{FHW}}(A)$ & $\pi_{\mathrm{FHW}}(B_f)$ & $\pi_{\mathrm{FHW}}(C_f)$ \\ \hline
$\equiv0(4)$ & $(0,0,0)$ & $(0,0,0)$ & $(0,0,0)$ \\ \hline
$1$ & $(0,0,0)$ & $(0,0,0)$ & $(0,0,0)$ \\ \hline
$\geq5,\equiv1(4)$ & $(0,0,0)$ & $(0,\pi(\widetilde{f}),\pi(\widetilde{f}))$ & $(0,\pi(f),\pi(f))$ \\ \hline
$\equiv2(4)$ & $(1,1,1)$ & n/a & n/a \\ \hline
$3$ & $(0,0,1)$ & $(0,1,0)$ & $(0,0,1)$ \\ \hline
$\geq7,\equiv3(4)$ & $(0,0,1)$ & $(0,\pi(\widetilde{f}),\pi(\widetilde{f})+1)$ & $(0,\pi(f),\pi(f)+1)$ \\ \hline
\end{tabular}
\caption{FHW parity types of $A$, $B_f$ and $C_f$}
\label{fhwTable}
\end{center}
\end{table}

Corollary \ref{fhwCor} and Theorem \ref{mainTheo4}(1) imply that if $G$ is a finite group with $|G|>3$ that satisfies the Hall-Paige condition, then
\begin{align*}
&\{\pi_{\mathrm{FHW}}(B_f):f\text{ is a complete mapping of }G\} \\
&=\{\pi_{\mathrm{FHW}}(C_f):f\text{ is a complete mapping of }G\} \\
&=
\begin{cases}
\{(0,0,0)\}, & \text{if }|G|\equiv0\Mod{4}, \\
\{(0,0,0),(0,1,1)\}, & \text{if }|G|\equiv1\Mod{4}, \\
\{(0,0,1),(0,1,0)\}, & \text{if }|G|\equiv3\Mod{4},
\end{cases}
\end{align*}
so we understand the possible values of $\pi_{\mathrm{FHW}}(B_f)$ and $\pi_{\mathrm{FHW}}(C_f)$. On the other hand, understanding the possible values of the pair $(\pi_{\mathrm{FHW}}(B_f),\pi_{\mathrm{FHW}}(C_f))$ is more delicate, as it is linked to understanding the possible values of $(\pi(f),\pi(\tilde{f}))$ for complete mappings $f$ of $G$; see also Question \ref{openQues3} in the next subsection.

Our earlier results also allow us to understand the possible values of $\pi_{\mathrm{K}}(L)$ for $L=L_h$ (in particular for $L\in\{A,B_f,C_f\}$):

\begin{corrollary}\label{kCor}
Let $G$ be a finite group, and let $h\in\Sym(G)$. Then
\[
\pi_{\mathrm{K}}(L_h)=
\begin{cases}
(0,0), & \text{if }G\text{ satisfies the Hall-Paige condition}, \\
(\frac{1}{2}|G|,\frac{1}{2}|G|), & \text{otherwise}.
\end{cases}
\]
\end{corrollary}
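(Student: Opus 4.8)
The plan is to read the row and column permutations of $L_h$ off Proposition \ref{lhProp} and then determine, separately for rows and columns, how the $n=|G|$ permutations split between the two parities. Recall from Definition \ref{parityTypeDef}(2) that $\pi_{\mathrm{K}}(L_h)=(k,m)$, where $k$ (respectively $m$) is the size of the \emph{smaller} of the two parity classes among the row (respectively column) permutations, so that $k,m\in\{0,1,\ldots,\lfloor n/2\rfloor\}$. By Proposition \ref{lhProp}(1) the row permutations of $L_h$ are the $\lambda(h(g_i))$, and since $h$ permutes $G$ these are exactly the permutations $\lambda(g)$ for $g\in G$, each occurring once. By Proposition \ref{lhProp}(2) together with Lemma \ref{lhLem}(1), the parity of the $j$-th column permutation $\rho(g_j)\circ h$ equals $\pi(\lambda(g_j))+\pi(h)$. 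Thus everything reduces to counting, among $g\in G$, how many of the $\lambda(g)$ are even and how many are odd.

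The key observation I would use is that $\pi\circ\lambda\colon G\to\IF_2$ is a group homomorphism, being the composite of the left-regular representation $\lambda\colon G\to\Sym(G)$ with the parity homomorphism $\pi\colon\Sym(G)\to\IF_2$. By Lemma \ref{lhLem}(1), its value at $g$ is $1$ exactly when $\langle g\rangle$ contains a nontrivial Sylow $2$-subgroup of $G$. If $G$ satisfies the Hall-Paige condition, then no cyclic subgroup can contain a (trivial or noncyclic) Sylow $2$-subgroup, so $\pi\circ\lambda$ is identically $0$: every row permutation is then even, giving $k=0$, while every column parity equals the constant $\pi(h)$, giving $m=0$, so that $\pi_{\mathrm{K}}(L_h)=(0,0)$.

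If instead $G$ fails the Hall-Paige condition, then its Sylow $2$-subgroups are cyclic and nontrivial; taking $g$ to be a generator of one of them makes $\langle g\rangle$ a Sylow $2$-subgroup, so $\pi(\lambda(g))=1$ and $\pi\circ\lambda$ is surjective. Its kernel then has index $2$, so exactly $\tfrac12|G|$ of the elements $g$ yield an even $\lambda(g)$ and exactly $\tfrac12|G|$ yield an odd one; note here that $|G|$ is even, so $\tfrac12|G|=\lfloor n/2\rfloor$ is a legitimate value. Hence the row permutations split evenly, giving $k=\tfrac12|G|$. Since adding the constant $\pi(h)$ to each column parity either preserves the two classes (if $\pi(h)=0$) or merely interchanges them (if $\pi(h)=1$), without changing their sizes, the columns split evenly as well, giving $m=\tfrac12|G|$ and hence $\pi_{\mathrm{K}}(L_h)=(\tfrac12|G|,\tfrac12|G|)$.

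The only real content is the even split in the non-Hall-Paige case, and the point that makes it painless is recognizing $\pi\circ\lambda$ as a homomorphism into $\IF_2$: its nontriviality is then equivalent to the existence of a single $g$ with $\lambda(g)$ odd, which a generator of a cyclic Sylow $2$-subgroup supplies, and nontriviality of a homomorphism onto $\IF_2$ forces a kernel of index exactly $2$. One could instead invoke the structural fact that a finite group with cyclic Sylow $2$-subgroup has a normal $2$-complement (and hence a subgroup of index $2$), but the homomorphism argument is self-contained and reuses Lemma \ref{lhLem}(1) directly.
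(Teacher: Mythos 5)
Your proof is correct, and it reaches the key count by a genuinely different route than the paper. Both arguments share the same reduction: via Proposition \ref{lhProp}(1,2) and Lemma \ref{lhLem}(1), everything comes down to counting the elements $g\in G$ with $\pi(\lambda(g))=1$, i.e., those $g$ for which $\langle g\rangle$ contains a nontrivial Sylow $2$-subgroup (the constant shift by $\pi(h)$ in the column parities being harmless in both treatments). The paper handles the non-Hall-Paige case structurally: it invokes Burnside's normal $p$-complement theorem to write $G=S\ltimes H$ with $S$ a cyclic Sylow $2$-subgroup and $H$ the normal Hall $2'$-subgroup, observes that the relevant elements are exactly those mapping to generators of $S$ under the projection $G\rightarrow S$, and counts them as $\phi(|S|)\cdot|H|=\frac{1}{2}|S|\cdot|H|=\frac{1}{2}|G|$. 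You instead note that $\pi\circ\lambda:G\rightarrow\IF_2$ is a homomorphism, that a generator of a nontrivial cyclic Sylow $2$-subgroup witnesses its nontriviality, and that a nontrivial homomorphism onto $\IF_2$ has kernel of index exactly $2$; this yields the even split without any appeal to Burnside's theorem (indeed, your argument is essentially the classical proof that a group with nontrivial cyclic Sylow $2$-subgroup has a subgroup of index $2$, a special case of what the normal $p$-complement theorem provides). Your route is more elementary and self-contained, reusing Lemma \ref{lhLem}(1) as the only input; the paper's route identifies the odd-parity elements explicitly (those projecting onto generators of $S$), which is slightly more information but is not needed for the corollary.
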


\begin{proof}
Denote by $o(G)$ the number of $g\in G$ such that $\langle g\rangle$ contains a nontrivial (cyclic) Sylow $2$-subgroup of $G$. By Proposition \ref{lhProp}(1,2) and Lemma \ref{lhLem}(1), there are parities $p_r,p_c\in\IF_2$ such that exactly $o(G)$ of the row permutations, respectively of the column permutations, of $L_h$ have the parity $p_r$, respectively $p_c$. The result is clear once we have shown that
\[
o(G)=
\begin{cases}
0, & \text{if }G\text{ satisfies the Hall-Paige condition}, \\
\frac{1}{2}|G|, & \text{otherwise}.
\end{cases}
\]
Indeed, if $G$ satisfies the Hall-Paige condition, then $o(G)=0$ since $G$ has no nontrivial, cyclic Sylow 2-subgroup, so assume otherwise. Then $G$ does have a (nontrivial) cyclic Sylow $2$-subgroup, and it follows by Burnside's normal $p$-complement theorem that $G$ is of the form $G=S\ltimes H$ where $S$ is a Sylow $2$-subgroup of $G$ and $H$ is the unique normal Hall $2'$-subgroup of $G$. An element $g\in G$ has the property that $\langle g\rangle$ contains a Sylow $2$-subgroup of $G$ if and only if the canonical projection $G\rightarrow S$ maps $g$ to a generator of $S$. Since the number of generators of $S$ is $\phi(|S|)=\frac{1}{2}|S|$, it follows that $o(G)=\frac{1}{2}|S|\cdot|H|=\frac{1}{2}|G|$.
\end{proof}

\subsection{Open problems}\label{subsec5P3}

We conclude this paper with a discussion of related open problems for further research. In view of statements (4) and (5) of Theorem \ref{mainTheo4}, the following question is natural:

\begin{quesstion}\label{openQues1}
Is it true that $P_{\orth}(G)=\Sym(G)$ for \emph{all} large enough finite groups $G$ that satisfy the Hall-Paige condition (not only solvable ones)?
\end{quesstion}

In fact, one may ask the following even stronger open question:

\begin{quesstion}\label{openQues2}
Is it true that all large enough finite groups that satisfy the Hall-Paige condition also satisfy property (P)?
\end{quesstion}

Proposition \ref{openQuesProp} below outlines a possible approach to answering Question \ref{openQues2} in the affirmative. Following \cite[p.~88]{Rob96a}, a \emph{centerless completely reducible (CR-)group} is a direct product of nonabelian simple groups. An \emph{extension of a group $Q$ by a group $N$} is a group $G$ with a normal subgroup $N_0\cong N$ such that $G/N_0\cong Q$. If $\Ccal_1$ and $\Ccal_2$ are classes of groups closed under isomorphism, we say that a group $G$ is \emph{$\Ccal_1$-by-$\Ccal_2$} if $G$ is an extension of a group in $\Ccal_1$ by a group in $\Ccal_2$. We remark that assumption (4) in Proposition \ref{openQuesProp} is a \enquote{property (P) analogue} of Wilcox' result \cite[Proposition 11]{Wil09a} (which is based on Evans' \cite[Theorem 11]{Eva92b}).

\begin{propposition}\label{openQuesProp}
Assume that each finite group that is an extension of at least one of the following forms satisfies property (P):
\begin{enumerate}
\item (nontrivial finite centerless CR-group)-by-(finite cyclic $2$-group);
\item (nontrivial finite centerless CR-group)-by-((finite cyclic $2$-group)-by-$\IZ/3\IZ$);
\item (nontrivial finite centerless CR-group)-by-(solvable group of order at most $24$);
\item $\IZ/2\IZ$-by-(group satisfying property (P)).
\end{enumerate}
Then every finite group $G$ with $|G|>24$ that satisfies the Hall-Paige condition also satisfies property (P).
\end{propposition}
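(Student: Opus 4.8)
The plan is to argue by contradiction: suppose the conclusion fails and let $G$ be a counterexample of minimal order, so $|G|>24$, $G$ satisfies the Hall--Paige condition, but $G$ does not satisfy property (P). The workhorse is Lemma \ref{mainLem} in the following guise: since the Hall--Paige conjecture is now a theorem, ``$G/N$ admits a complete mapping'' is the same as ``$G/N$ satisfies the Hall--Paige condition'', so \emph{whenever $N\trianglelefteq G$ is nontrivial, satisfies property (P), and $G/N$ satisfies the Hall--Paige condition, then $G$ satisfies property (P)}. Minimality supplies property (P) for every proper section of $G$ that itself meets the hypotheses (order $>24$ and Hall--Paige), while Theorems \ref{mainTheo1} and \ref{mainTheo4} pin down the finitely many small groups (such as $\IZ/2\IZ$, $\IZ/3\IZ$, $C_2^2$ and $\Q_8$) that fail property (P). The aim is to show that these tools force $G$ to be of one of the four listed forms, contradicting the standing assumption that those all satisfy property (P). By Theorem \ref{mainTheo4}(3), $G$ is nonsolvable, and in particular of even order with noncyclic Sylow $2$-subgroups.

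First I would strip away odd primes. If $G$ had a normal subgroup $M$ of odd prime index $p$, then $M$ would contain a (noncyclic) Sylow $2$-subgroup of $G$, hence satisfy the Hall--Paige condition and, by minimality (the handful of smaller possibilities being dispatched by the small-group analysis), property (P), while $G/M\cong\IZ/p\IZ$ admits a complete mapping for odd $p$; Lemma \ref{mainLem} would then give a contradiction. Hence $G/G'$ is a $2$-group. The same mechanism shows that any minimal normal subgroup of odd order exceeding $3$ can be quotiented out: it satisfies property (P) by Theorem \ref{mainTheo4}(1), and quotienting by an odd subgroup preserves the noncyclic Sylow $2$-subgroup and hence the Hall--Paige condition. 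So the only odd minimal normal subgroups that resist reduction are copies of $\IZ/3\IZ$; this is the origin of the ``$\IZ/3\IZ$'' in form (2) and of the residual odd part in the small solvable quotient of form (3).

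Next I would locate a centerless CR-core. The nonsolvability of $G$ forces its generalized Fitting subgroup to have a nontrivial layer, a central product of quasisimple components. Working modulo the solvable radical $\Rad(G)$, the socle $\Soc(G/\Rad(G))$ is a nontrivial centerless CR-group $C$ with trivial centralizer, so $(G/\Rad(G))/C$ embeds into $\Out(C)$, which is solvable by Schreier's conjecture (a theorem via the classification of finite simple groups). Since $|C|\ge 60>24$ and $C$ satisfies the Hall--Paige condition, in the case where the components are genuinely centerless one gets an honest normal centerless CR-subgroup of $G$, and blocking Lemma \ref{mainLem} through it forces the part of $G$ above the core to \emph{fail} the Hall--Paige condition, i.e.\ to have cyclic Sylow $2$-subgroups. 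Combining this with the facts that $G/G'$ is a $2$-group, that the only surviving odd prime is $3$, and that any solvable ``top'' of order exceeding $24$ would itself be reducible by Theorem \ref{mainTheo4}(3) (via Proposition \ref{solvableProp} and Lemma \ref{mainLemVar}), one is driven into exactly the three shapes: $C$-by-(cyclic $2$-group) (form (1), with the trivial quotient giving $G=C$), $C$-by-((cyclic $2$-group)-by-$\IZ/3\IZ$) (form (2)), or $C$-by-(solvable group of order at most $24$) (form (3)).

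The genuine obstacle, present throughout, is that passing to a quotient can destroy the Hall--Paige condition by collapsing a noncyclic Sylow $2$-subgroup to a cyclic one; this is precisely what makes order-$2$ normal subgroups and index-$2$ subgroups impervious to Lemma \ref{mainLem}, since $\IZ/2\IZ$ has no complete mapping. Such obstructions are concentrated in the central involutions of $G$ --- notably the order-$2$ centers of quasisimple components (Schur multipliers), which prevent the CR-core from being taken centerless, as well as central involutions inside $\Rad(G)$ --- and the configuration in which $G$ is $\IZ/2\IZ$-by-(a group satisfying property (P)) cannot be resolved inductively but is exactly what hypothesis (4) absorbs; as the authors note, (4) is the property-(P) analogue of Wilcox's \cite[Proposition 11]{Wil09a}. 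I therefore expect the hardest part of the write-up to be the $2$-group bookkeeping showing that every minimal $G$ not already of forms (1)--(3) carries a central subgroup $N\cong\IZ/2\IZ$ with $G/N$ satisfying property (P), so that $G$ is of form (4), while simultaneously disposing of the small exceptional sections ($C_2^2$, $\Q_8$, $\IZ/2\IZ$, $\IZ/3\IZ$) that the induction cannot eliminate directly.
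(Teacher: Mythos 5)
Your proposal founders on two points, one of interpretation and one of substance. First, you have reversed the paper's extension convention: the paper explicitly defines an \emph{extension of $Q$ by $N$} to be a group with normal subgroup $N$ and quotient $Q$, so in hypotheses (1)--(3) the centerless CR-group is the \emph{quotient}, sitting on top of a normal solvable piece, and hypothesis (4) means a normal subgroup satisfying property (P) with quotient $\IZ/2\IZ$. You use all four in the opposite direction: you hunt for a \emph{normal} centerless CR-subgroup with a small solvable quotient above it, and you read (4) as a central involution $\langle z\rangle$ with $G/\langle z\rangle$ satisfying property (P). A proof along your lines, even if completed, would establish a proposition with genuinely different hypotheses from the one stated.

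Second, and more seriously, the structural claim your reduction rests on is false: even when the composition factors above $\Rad(G)$ are centerless simple groups, the socle of $G/\Rad(G)$ need not lift to a normal CR-subgroup of $G$. Take $G=C_2^4\rtimes A_5$ with the faithful irreducible action: $\Soc(G/\Rad(G))\cong A_5$ is centerless, but $G$ has no normal subgroup isomorphic to $A_5$ (such a subgroup would intersect $C_2^4$ trivially and hence centralize it), and $G$ has no quasisimple subnormal subgroups at all, so your opening claim that nonsolvability forces a nontrivial layer also fails. The centerless CR-group exists in general only as a \emph{section} above the solvable radical; this is exactly why hypotheses (1)--(3) are phrased with it as the quotient, and why the paper's proof (a direct case analysis, not a minimal counterexample argument) takes $H$ to be the preimage in $G$ of the characteristic centerless CR-subgroup $T$ of $G/\Rad(G)$ and applies the hypotheses to $H$ itself. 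Concretely: if $\Rad(G)$ satisfies property (P), Lemma~\ref{mainLem} finishes at once; otherwise either $|\Rad(G)|\leq 24$ and hypothesis (3) applies to $H$, or $\Rad(G)$ fails Hall--Paige, Burnside gives a normal Hall $2'$-subgroup $D$ of $\Rad(G)$ with cyclic $2$-group quotient, and one splits on $|D|\leq 3$ (hypotheses (1)/(2) apply to $H$; this, not minimal normal subgroups, is the true origin of the $\IZ/3\IZ$ in form (2), since groups of order $1$ or $3$ fail property (P)) versus $|D|>3$ (Theorem~\ref{mainTheo4}(1) and Lemma~\ref{mainLem} finish). Finally, when $G/H$ fails Hall--Paige, one pulls back its normal $2$-complement to a subgroup $K$ satisfying property (P) and climbs the cyclic $2$-group $G/K$ by iterating hypothesis (4) --- that is the actual role of form (4), which your sketch instead assigns, under the reversed reading, to Schur-multiplier involutions of quasisimple components, a case your argument never resolves.
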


\begin{proof}
Theorem \ref{mainTheo4}(3) guarantees that this holds if $G$ is solvable, so we may assume that $G$ is nonsolvable. Set
\[
\Rad(G):=\langle N: N\unlhd G, N\text{ is solvable}\rangle,
\]
the so-called \emph{solvable radical of $G$}, which is the largest solvable normal subgroup of $G$. Note that $G/\Rad(G)$ is nonsolvable and thus satisfies the Hall-Paige condition. If $\Rad(G)$ satisfies property (P), then so does $G$ by Lemma \ref{mainLem}, so we may assume that $\Rad(G)$ does \emph{not} satisfy property (P). We distinguish two cases:
\begin{enumerate}
\item Case: $|\Rad(G)|\leq24$. Since $G/\Rad(G)$ has no nontrivial solvable normal subgroups, it follows by \cite[result 3.3.18 on p.~89]{Rob96a} that $G/\Rad(G)$ has a nontrivial normal (in fact, characteristic) subgroup $T$ that is centerless CR. Let $H\unlhd G$ be the pre-image of $T$ under the canonical projection $G\rightarrow G/\Rad(G)$. Then $H$, being an extension of $T$ by the solvable group $\Rad(G)$ with $|\Rad(G)|\leq24$, satisfies property (P) by our assumption (3). If $G/H$ satisfies the Hall-Paige condition, then $G$ satisfies property (P) by Lemma \ref{mainLem}, so assume otherwise. Then the Sylow $2$-subgroups of $G/H$ are nontrivial and cyclic, and by Burnside's normal $p$-complement theorem, we find that $G/H$ has a normal Hall $2'$-subgroup $C$. Let $K\unlhd G$ be the pre-image of $C$ under the canonical projection $G\rightarrow G/H$. Then $K$ is an extension of $C$, which satisfies the Hall-Paige condition, by $H$, which satisfies property (P), whence $K$ satisfies property (P). But $G/K$ is a cyclic $2$-group, whence some iterated applications of assumption (4) show that $G$ satisfies property (P), as required.
\item Case: $|\Rad(G)|>24$. Since we assume that $\Rad(G)$ does not satisfy property (P), Theorem \ref{mainTheo4}(3) lets us conclude that $\Rad(G)$ does not satisfy the Hall-Paige condition. By Burnside's normal $p$-complement theorem, $\Rad(G)$ has a normal Hall $2'$-subgroup $D$, and $\Rad(G)$ is (finite cyclic $2$-group)-by-$D$. Let $T$ and $H$ be as in case (1). If $|D|\leq 3$, then $H$ satisfies property (P) by our assumptions (1) and (2), and we can conclude as in case (1). Hence, assume that $|D|>3$. Then $D$ satisfies property (P) by Theorem \ref{mainTheo4}(1). Since $G/D$ is nonsolvable and thus satisfies the Hall-Paige condition, we conclude by Lemma \ref{mainLem} that $G$ satisfies property (P), as required.
\end{enumerate}
\end{proof}

At the end of the previous subsection, we mentioned that understanding the possible parity pairs $(\pi(f),\pi(\tilde{f}))$ for complete mappings $f$ of a given finite group $G$ would also lead to an understanding of the parity type pairs $(\pi_{\mathrm{FHW}}(B_f),\pi_{\mathrm{FHW}}(C_f))$ of Latin squares. This motivates the following, yet stronger open question:

\begin{quesstion}\label{openQues3}
Is it true that for every large enough finite group $G$ that satisfies the Hall-Paige condition, one has
\[
\{(\pi(f),\pi(\tilde{f})):f\text{ is a complete mapping of }G\}=\{0,1\}\times\{0,1\}?
\]
\end{quesstion}

Two other open problems, motivated by Theorem \ref{mainTheo2} and the remarks after Corollary \ref{mainCor}, are to derive explicit lower bounds on the number of complete mappings of a finite group $G$ satisfying the Hall-Paige condition that reflect the asymptotic formula from \cite[Theorem 1.2]{EMM22a}, and to try to make the \enquote{sufficiently large} in \cite[Theorem 6.9]{MP22a} explicit. More specifically, we pose the following open problems:

\begin{problemm}\label{openProb1}
Find absolute constants $c\in\left(0,\e^{-1}\right)$ and $N>0$ such that every finite group $G$ with $|G|\geq N$ that satisfies the Hall-Paige condition has at least $c|G/G'|\frac{(|G|!)^2}{|G|^{|G|}}$ complete mappings.
\end{problemm}

\begin{problemm}\label{openProb2}
Find an absolute constant $N'$ such that every finite group $G$ with $|G|\geq N'$ that satisfies the Hall-Paige condition and is not an elementary abelian $2$-group is harmonious.
\end{problemm}

The notion of R-sequenceability was equally important for our proof of Proposition \ref{specialCasesProp} as harmoniousness, and we wonder whether there is an analogue of \cite[Theorem 6.9]{MP22a} for it:

\begin{problemm}\label{openProb3}
Characterize R-sequenceability for large enough finite groups. Is it true that every large enough finite group that satisfies the Hall-Paige condition is R-sequenceable (by \cite[Theorem 3]{OT19a}, this is true for abelian groups)?
\end{problemm}

Finally, while we were able to show in our proof of Proposition \ref{specialCasesProp}(5) that modular $2$-groups $\M_{2^n}$ for $n\geq4$ are harmonious, the following questions concerning classes of groups mentioned in Proposition \ref{specialCasesProp} are open:

\begin{quesstion}\label{openQues4}
Let $n\geq4$ be a positive integer.
\begin{enumerate}
\item Is the semidihedral group $\SD_{2^n}$ harmonious?
\item Is $\SD_{2^n}$ R-sequenceable?
\item Is the modular group $\M_{2^n}$ R-sequenceable?
\end{enumerate}
\end{quesstion}

Answering parts of Question \ref{openQues4} in the affirmative could lead to a simplification of our proofs that $\SD_{2^n}$ and $\M_{2^n}$ satisfy property (P). Indeed, since the inversion function of $\SD_{2^n}$ is odd, one may replace Appendix A by an affirmative answer to one of parts (1) or (2) of Question \ref{openQues4}. Analogously, since the inversion function of $\M_{2^n}$ is even, Appendix C could be replaced by an affirmative answer to part (3). Of course, \cite[Theorem 6.9]{MP22a} implies that $\SD_{2^n}$ is harmonious for large enough $n$.

\setcounter{secnumdepth}{0}

\section{Appendix A: Semidihedral \texorpdfstring{$2$}{2}-groups have even orthomorphisms}

Let $n\geq4$ be a positive integer, set $k:=2^{n-3}$, and consider the semidihedral group
\[
\SD_{2^n}=\SD_{8k}=\langle x,y: x^{4k}=y^2=1, y^{-1}xy=x^{2k-1}\rangle.
\]
Writing the elements of this group in normal form as $x^{\ell}y^{\epsilon}$ with $\ell\in\{0,1,\ldots,4k-1\}$ and $\epsilon\in\{0,1\}$, the group product is as follows:
\[
(x^{\ell_1}y^{\epsilon_1})\cdot(x^{\ell_2}y^{\epsilon_2})=
\begin{cases}
x^{\ell_1+\ell_2}y^{\epsilon_1+\epsilon_2}, & \text{if }\epsilon_1=0, \\
x^{\ell_1-\ell_2}y^{\epsilon_1+\epsilon_2}, & \text{if }\epsilon_1=1\text{ and }2\mid\ell_2, \\
x^{\ell_1-\ell_2+2k}y^{\epsilon_1+\epsilon_2}, & \text{if }\epsilon_1=1\text{ and }2\nmid\ell_2.
\end{cases}
\]
Consider the complete mapping $f$ of $\SD_{2^n}$ from \cite[proof of Lemma 1]{HP55a}, defined through the formula
\[
f(x^{\ell}y^{\epsilon})=
\begin{cases}
x^{\ell}, & \text{if }\epsilon=0\text{ and }0\leq\ell\leq 2k-1, \\
x^{\ell-2k}y, & \text{if }\epsilon=0\text{ and }2k\leq\ell\leq 4k-1, \\
x^{-(\ell+1)}, & \text{if }\epsilon=1\text{ and }0\leq\ell\leq 2k-1, \\
x^{2k-(\ell+1)}y, & \text{if }\epsilon=1\text{ and }2k\leq\ell\leq 4k-1.
\end{cases}
\]
Using the above group product formula, it is not hard to check that the associated orthomorphism $g:=\tilde{f}:z\mapsto zf(z)$ of $f$ is given by the following formulas (which are chosen such that the $x$-exponent of the image is always in the standard range $\{0,1,\ldots,4k-1\}$):
\begin{itemize}
\item $g(x^{\ell}y^{\epsilon})=x^{2\ell}$ if $\epsilon=0$ and $0\leq\ell\leq 2k-1$ (case 1);
\item $g(x^{\ell}y^{\epsilon})=x^{2\ell-2k}y$ if $\epsilon=0$ and $2k\leq\ell\leq 3k-1$ (case 2);
\item $g(x^{\ell}y^{\epsilon})=x^{2\ell-6k}y$ if $\epsilon=0$ and $3k\leq\ell\leq 4k-1$ (case 3);
\item $g(x^{\ell}y^{\epsilon})=x^{2\ell+1}y$ if $\epsilon=1$, $0\leq\ell\leq 2k-2$ and $2\nmid\ell$ (case 4);
\item $g(x^{\ell}y^{\epsilon})=x^{2\ell+2k+1}y$ if $\epsilon=1$, $0\leq\ell\leq k-2$ and $2\mid\ell$ (case 5);
\item $g(x^{\ell}y^{\epsilon})=x^{2\ell-2k+1}y$ if $\epsilon=1$, $k\leq\ell\leq 2k-2$ and $2\mid\ell$ (case 6);
\item $g(x^{\ell}y^{\epsilon})=x^{2\ell-2k+1}$ if $\epsilon=1$, $2k+1\leq\ell\leq 3k-1$ and $2\nmid\ell$ (case 7);
\item $g(x^{\ell}y^{\epsilon})=x^{2\ell-6k+1}$ if $\epsilon=1$, $3k+1\leq\ell\leq 4k-1$ and $2\nmid\ell$ (case 8);
\item $g(x^{\ell}y^{\epsilon})=x^{2\ell-4k+1}$ if $\epsilon=1$, $2k\leq\ell\leq 4k-2$ and $2\mid\ell$ (case 9).
\end{itemize}
In order to check that $g$ is an even permutation of $\SD_{8k}$, we will use the following combinatorial result:

\begin{theorem*}\label{inversionTheo}(\cite[Lemma 3.8]{Bon12a})
Let $X$ be a finite set, and let $<$ be a strict total order of $X$. The parity of a permutation $\sigma\in\Sym(X)$ (i.e., the parity of its number of cycles of even length) is the same as the parity of the nonnegative integer
\[
I(\sigma)=I_{<}(\sigma)=|\{(a,b)\in X^2: a<b\text{ and }\sigma(b)<\sigma(a)\}|,
\]
which counts the number of so-called \emph{inversions of $\sigma$}.
\end{theorem*}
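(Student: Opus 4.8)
The plan is to prove this classical fact by the standard route: first identify $(-1)^{I(\sigma)}$ as the sign homomorphism, then relate the sign to the cycle structure. Using the total order $<$, I would identify $X$ with $\{1,2,\ldots,n\}$ carrying its usual order, so that $\Sym(X)$ becomes $\Sym(n)$ and $I(\sigma)=|\{(i,j):i<j,\ \sigma(i)>\sigma(j)\}|$ in the familiar sense.

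The first and main step is to show that $\epsilon(\sigma):=(-1)^{I(\sigma)}$ is a group homomorphism $\Sym(n)\to\{\pm1\}$. For this I would use the Vandermonde polynomial $P=\prod_{1\le i<j\le n}(X_i-X_j)$ and let $\sigma$ act on it by permuting indices, $(\sigma P)=\prod_{1\le i<j\le n}(X_{\sigma(i)}-X_{\sigma(j)})$. Each factor with $i<j$ equals $\pm(X_a-X_b)$ for the unique $a<b$ with $\{a,b\}=\{\sigma(i),\sigma(j)\}$, and the sign is $-1$ exactly when $(i,j)$ is an inversion of $\sigma$; hence $\sigma P=(-1)^{I(\sigma)}P$. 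Since this index-permutation defines a genuine action (so $(\sigma\tau)P=\sigma(\tau P)$) and $P\ne0$, comparing scalars yields $(-1)^{I(\sigma\tau)}=(-1)^{I(\sigma)}(-1)^{I(\tau)}$, i.e.\ $\epsilon$ is multiplicative. This is the step I expect to require the most care: getting the action convention consistent so that the composition identity holds, after which the homomorphism property is immediate. An alternative is to verify directly that each adjacent transposition changes $I$ by exactly $\pm1$ and to invoke that adjacent transpositions generate $\Sym(n)$; I prefer the Vandermonde argument because it delivers multiplicativity in one stroke.

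Next I would evaluate $\epsilon$ on a transposition $(a\ b)$ with $a<b$ by directly counting its inversions: the inverted pairs are precisely $(a,b)$ together with $(a,j)$ and $(i,b)$ for $a<i,j<b$, giving $2(b-a)-1$ inversions, an odd number, so $\epsilon((a\ b))=-1$. By the homomorphism property, if $\sigma=\tau_1\cdots\tau_m$ is any factorization into transpositions, then $\epsilon(\sigma)=(-1)^m$, and therefore $I(\sigma)\equiv m\pmod2$.

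Finally I would connect $m$ to the cycle type. Writing $\sigma$ as a product of disjoint cycles (fixed points included) of lengths $\ell_1,\ldots,\ell_r$, each $\ell_t$-cycle factors as a product of $\ell_t-1$ transpositions, so $\sigma$ is a product of $\sum_{t}(\ell_t-1)$ transpositions. Modulo $2$ we have $\ell_t-1\equiv\ell_t+1$, which is odd precisely when $\ell_t$ is even; hence $\sum_{t}(\ell_t-1)$ is congruent modulo $2$ to the number of even-length cycles of $\sigma$. Combining this with $I(\sigma)\equiv m\pmod2$ gives $I(\sigma)\equiv(\text{number of even-length cycles of }\sigma)\pmod2$, which is exactly the asserted equality of parities.
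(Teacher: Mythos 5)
Your proof is correct and complete. One structural remark first: the paper does not prove this statement at all --- it is quoted as a known fact from \cite[Lemma 3.8]{Bon12a} and used as a black box in Appendices A and C --- so there is no internal argument to compare against; what you have supplied is the standard textbook proof, and it is sound. The three steps all check out: the Vandermonde action gives $\sigma P=(-1)^{I(\sigma)}P$ because $\sigma$ induces a bijection on $2$-subsets, so each factor of $P$ is hit exactly once up to a sign that is $-1$ precisely on inversions; the inversion count $2(b-a)-1$ for the transposition $(a\ b)$ with $a<b$ is right (the pair $(a,b)$ itself plus the $b-a-1$ pairs $(a,j)$ and the $b-a-1$ pairs $(i,b)$ with $a<i,j<b$); and the final congruence $\sum_t(\ell_t-1)\equiv\#\{t:\ell_t\text{ even}\}\pmod 2$ correctly matches the paper's parenthetical definition of parity as the parity of the number of even-length cycles, with fixed points harmlessly contributing $0$. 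On the one point you flag yourself: with the substitution action $T_\sigma(P)(X_1,\ldots,X_n)=P(X_{\sigma(1)},\ldots,X_{\sigma(n)})$ one computes $T_\sigma\circ T_\tau=T_{\sigma\tau}$, and even if one's convention instead produced a right action, the multiplicativity of the scalar $(-1)^{I(\cdot)}$ would be unaffected since $\{\pm1\}$ is abelian --- so the step survives either convention. Your alternative route via adjacent transpositions (each changing $I$ by exactly $1$) would work equally well and is closer in spirit to how such lemmas are often proved in combinatorics texts; the Vandermonde argument buys multiplicativity in one stroke at the cost of introducing the polynomial ring, which is a reasonable trade.
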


More specifically, we say that a pair $(a,b)\in X^2$ with $a<b$ is \emph{inverted by $\sigma$} (or, synonymously, is an \emph{inversion of $\sigma$}) if $\sigma(b)<\sigma(a)$ -- note that pairs $(a,b)$ with $a>b$ do \emph{not} count as inversions of $\sigma$ under any circumstances. In Table \ref{tableA}, we count the number of inversions of $g$ with respect to the strict total order $<$ of $\SD_{8k}$ defined by
\[
x^{\ell_1}y^{\epsilon_1}<x^{\ell_2}y^{\epsilon_2} :\Leftrightarrow \epsilon_1<\epsilon_2\text{, or }\epsilon_1=\epsilon_2\text{ and }\ell_1<\ell_2,
\]
distinguishing all possible case combinations for $a=x^{\ell}y^{\epsilon}$ and $b=x^{\ell'}y^{\epsilon'}$ according to the nine cases in the formulas for the $g$-values listed above. The condition in the third column together with the case conditions characterizes when $(a,b)$ is inverted. It is important to note that cases 4--9 involve a parity condition on the $x$-exponent, which affects the counting.

\begin{center}
\begin{longtable}{|c|c|c|c|}
\hline
$a$-case & $b$-case & Characterization of when $(a,b)$ is inverted & Number of inversions \\ \hline
$1$ & $1$ & never ($a<b\Rightarrow \ell<\ell'\Rightarrow g(a)<g(b)$) & $0$ \\ \hline
$1$ & $2$--$6$ & never ($g(b)>g(a)$ throughout) & $0$ \\ \hline
$1$ & $7$ & $2k+1\leq\ell'\leq \ell+k-1$ & $\frac{1}{4}k(k-2)$ \\ \hline
$1$ & $8$ & $3k+1\leq\ell'\leq\min\{\ell+3k-1,4k-1\}$ & $\frac{1}{4}k(3k-2)$ \\ \hline
$1$ & $9$ & $2k\leq\ell'\leq\ell+2k-1$ & $k^2$ \\ \hline
$2$ & $1$ & never ($b<a$ throughout) & $0$ \\ \hline
$2$ & $2$ & never ($a<b\Rightarrow \ell<\ell'\Rightarrow g(a)<g(b)$) & $0$ \\ \hline
$2$ & $3$ & $3k\leq\ell'\leq 4k-1$ & $k^2$ \\ \hline
$2$ & $4$ & $0\leq\ell'\leq\ell-k-1$ & $\frac{1}{4}k(3k-2)$ \\ \hline
$2$ & $5$ & $0\leq\ell'\leq\ell-2k-1$ & $\frac{1}{4}k^2$ \\ \hline
$2$ & $6$ & $k\leq\ell'\leq 2k-1$ & $\frac{1}{2}k^2$ \\ \hline
$2$ & $7$ & $2k\leq\ell'\leq 3k-1$ & $\frac{1}{2}k^2$ \\ \hline
$2$ & $8$ & $3k\leq\ell'\leq 4k-1$ & $\frac{1}{2}k^2$ \\ \hline
$2$ & $9$ & $2k\leq\ell'\leq 4k-1$ & $k^2$ \\ \hline
$3$ & $1$--$2$ & never ($b<a$ throughout) & $0$ \\ \hline
$3$ & $3$ & never ($a<b\Rightarrow \ell<\ell'\Rightarrow g(a)<g(b)$) & $0$ \\ \hline
$3$ & $4$ & $0\leq\ell'\leq\ell-3k-1$ & $\frac{1}{4}k(k-2)$ \\ \hline
$3$ & $5$ & never ($g(b)>g(a)$ throughout) & $0$ \\ \hline
$3$ & $6$ & $k\leq\ell'\leq\ell-2k-1$ & $\frac{1}{4}k^2$ \\ \hline
$3$ & $7$ & $2k\leq\ell'\leq 3k-1$ & $\frac{1}{2}k^2$ \\ \hline
$3$ & $8$ & $3k\leq\ell'\leq 4k-1$ & $\frac{1}{2}k^2$ \\ \hline
$3$ & $9$ & $2k\leq\ell'\leq 4k-1$ & $k^2$ \\ \hline
$4$ & $1$--$3$ & never ($b<a$ throughout) & $0$ \\ \hline
$4$ & $4$ & never ($a<b\Rightarrow \ell<\ell'\Rightarrow g(a)<g(b)$) & $0$ \\ \hline
$4$ & $5$ & never ($a<b\Rightarrow \ell<\ell'\Rightarrow g(a)<g(b)$) & $0$ \\ \hline
$4$ & $6$ & $\max\{k,\ell+1\}\leq\ell'\leq\min\{2k-1,\ell+k-1\}$ & $\frac{1}{4}k^2$ \\ \hline
$4$ & $7$--$9$ & always & $2k^2$ \\ \hline
$5$ & $1$--$3$ & never ($b<a$ throughout) & $0$ \\ \hline
$5$ & $4$ & $\ell+1\leq\ell'\leq\ell+k-1$ & $\frac{1}{4}k^2$ \\ \hline
$5$ & $5$ & never ($a<b\Rightarrow \ell<\ell'\Rightarrow g(a)<g(b)$) & $0$ \\ \hline
$5$ & $6$--$9$ & always & $\frac{5}{4}k^2$ \\ \hline
$6$ & $1$--$3$ & never ($b<a$ throughout) & $0$ \\ \hline
$6$ & $4$ & never ($a<b\Rightarrow \ell<\ell'\Rightarrow g(a)<g(b)$) & $0$ \\ \hline
$6$ & $5$ & never ($b<a$ throughout) & $0$ \\ \hline
$6$ & $6$ & never ($a<b\Rightarrow \ell<\ell'\Rightarrow g(a)<g(b)$) & $0$ \\ \hline
$6$ & $7$--$9$ & always & $k^2$ \\ \hline
$7$ & $1$ & never ($b<a$ throughout) & $0$ \\ \hline
$7$ & $2$--$6$ & never ($g(b)>g(a)$ throughout) & $0$ \\ \hline
$7$ & $7$ & never ($a<b\Rightarrow \ell<\ell'\Rightarrow g(a)<g(b)$) & $0$ \\ \hline
$7$ & $8$ & always & $\frac{1}{4}k^2$ \\ \hline
$7$ & $9$ & $\ell+1\leq\ell'\leq\ell+k-1$ & $\frac{1}{4}k^2$ \\ \hline
$8$ & $1$--$7$ & never ($b<a$ throughout) & $0$ \\ \hline
$8$ & $8$--$9$ & never ($a<b\Rightarrow \ell<\ell'\Rightarrow g(a)<g(b)$) & $0$ \\ \hline
$9$ & $1$--$6$ & never ($b<a$ throughout) & $0$ \\ \hline
$9$ & $7$ & never ($a<b\Rightarrow \ell<\ell'\Rightarrow g(a)<g(b)$) & $0$ \\ \hline
$9$ & $8$ & $\max\{3k,\ell+1\}\leq\ell'\leq\min\{4k-1,\ell+k-1\}$ & $\frac{1}{4}k^2$ \\ \hline
$9$ & $9$ & never ($a<b\Rightarrow \ell<\ell'\Rightarrow g(a)<g(b)$) & $0$ \\ \hline
\caption{Counting the inversions of $g:\SD_{2^n}\rightarrow\SD_{2^n}$.}
\label{tableA}
\end{longtable}
\end{center}

In summary, we obtain that
\[
I(g)=2\cdot\frac{1}{4}k(k-2)+2\cdot\frac{1}{4}k(3k-2)+\frac{25}{2}k^2=\frac{29}{2}k^2-2k.
\]
In particular, $I(g)$ is even, as required.

\section{Appendix B: Modular \texorpdfstring{$2$}{2}-groups are harmonious}

Let $n\geq4$ be a positive integer. Set $m:=2^{n-2}$, and consider the modular group of order $2^n=4m$,
\[
\M_{2^n}=\M_{4m}=\langle x,y: x^{2m}=y^2=1, y^{-1}xy=x^{m+1}\rangle.
\]
Write the elements of this group in normal form as $x^{\ell}y^{\epsilon}$ with $\ell\in\{0,1,\ldots,2m-1\}$ and $\epsilon\in\{0,1\}$. We want to show that $\M_{2^n}$ is harmonious, i.e., that it has a complete mapping that moves all $2^n$ group elements in a single cycle. Consider the following function $f:\M_{2^n}\rightarrow\M_{2^n}$, which is a slight modification of the complete mapping of $\M_{2^n}$ specified by Hall and Paige in \cite[proof of Lemma 1]{HP55a}:
\[
f(x^{\ell}y^{\epsilon})=
\begin{cases}
x^{\ell+1}, & \text{if }\epsilon=0\text{ and }1\leq\ell\leq m, \\
x^{\ell+m+1}y, & \text{if }\epsilon=0\text{, and either }\ell=0\text{ or }m+1\leq\ell\leq 2m-1, \\
x^{\ell+m+2}, & \text{if }\epsilon=1\text{ and }0\leq\ell\leq m-1, \\
x^{\ell+2}y, & \text{if }\epsilon=1\text{ and }m\leq\ell\leq 2m-1.
\end{cases}
\]
We will check that $f$ is a complete mapping of $\M_{2^n}$ consisting of a single cycle. To verify that $f$ is a $2^n$-cycle, distinguish two cases according to the parity of $n$:
\begin{itemize}
\item If $n$ is even, then $m=2^{n-2}\equiv1\Mod{3}$. When spelling out the cycle, we use the notation
\[
(g_1(i)\mapsto g_2(i)\mapsto\cdots\mapsto g_r(i):i=i_0,i_0+1,\ldots,i_1)
\]
to denote a segment of the cycle of the form
\begin{align*}
&g_1(i_0)\mapsto g_2(i_0)\mapsto\cdots\mapsto g_r(i_0) \\
&\mapsto g_1(i_0+1)\mapsto g_2(i_0+1)\mapsto\cdots\mapsto g_r(i_0+1) \\
&\mapsto\cdots \\
&\mapsto g_1(i_1)\mapsto g_2(i_1)\mapsto\cdots\mapsto g_r(i_1).
\end{align*}
The tip of the arrow pointing at this segment is glued to $g_1(i_0)$ and the shaft of the arrow departing from the segment is glued to $g_r(i_1)$. We note that $r$ may be $1$. With this notational convention, the cycle of $f$ looks as follows:
\begin{align*}
1&\mapsto (x^{m+2t-1}y: t=1,2,\ldots,\frac{m}{2})\mapsto(x^{3u-2}y\mapsto x^{m+3u}: u=1,2,\ldots,\frac{m-1}{3}) \\
&\mapsto(x^{m+2s}y: s=0,1,\ldots,\frac{m}{2}-1)\mapsto(x^{3v}y\mapsto x^{m+2+3v}: v=0,1,\ldots,\frac{m-4}{3}) \\
&\mapsto x^{m-1}y\mapsto(x^w: w=1,2,\ldots,m) \\
&\mapsto(x^{m+1+3z}\mapsto x^{2+3z}y: z=0,1,\ldots,\frac{m-4}{3})\mapsto1.
\end{align*}
This is a cycle of length $4m=|\M_{2^n}|$.
\item If $n$ is odd, then $m=2^{n-2}\equiv2\Mod{3}$, and the cycle of $f$ looks as follows:
\begin{align*}
1&\mapsto (x^{m+2t-1}y: t=1,2,\ldots,\frac{m}{2})\mapsto(x^{3u-2}y\mapsto x^{m+3u}: u=1,2,\ldots,\frac{m-2}{3}) \\
&\mapsto x^{m-1}y\mapsto(x^w: w=1,2,\ldots,m) \\
&\mapsto(x^{m+1+3z}\mapsto x^{2+3z}y: z=0,1,\ldots,\frac{m-2}{3})\mapsto(x^{m+2s}y: s=1,\ldots,\frac{m}{2}-1) \\
&\mapsto(x^{3v}y\mapsto x^{m+2+3v}: v=0,1,\ldots,\frac{m-5}{3})\mapsto x^{m-2}y\mapsto 1. 
\end{align*}
Again, this is a cycle of length $4m$, as required.
\end{itemize}
It remains to check that $\tilde{f}:g\mapsto gf(g)$, is a permutation of $\M_{2^n}$. It is given by the following formulas:
\[
\tilde{f}(x^{\ell}y^{\epsilon})=
\begin{cases}
x^{2\ell+1}, & \text{if }\epsilon=0\text{ and }1\leq\ell\leq m, \\
x^{2\ell+m+1}y, & \text{if }\epsilon=0\text{, and either }\ell=0\text{ or }m+1\leq\ell\leq 2m-1, \\
x^{(m+2)\ell+m+2}y, & \text{if }\epsilon=1\text{ and }0\leq\ell\leq m-1, \\
x^{(m+2)\ell+2}, & \text{if }\epsilon=1\text{ and }m\leq\ell\leq 2m-1.
\end{cases}
\]
Each of the two functions
\[
\IZ/2m\IZ\rightarrow\IZ/2m\IZ, a\mapsto 2a\text{ resp. }a\mapsto(m+2)a=2(2^{n-3}+1)a,
\]
is a group endomorphism of $\IZ/2m\IZ$ that assumes its full image, $2\IZ/2m\IZ$, on any consecutive interval of $m$ arguments, $a_0+2m\IZ,a_0+1+2m\IZ,\ldots,a_0+m+2m\IZ$. Therefore,
\begin{itemize}
\item the images in the first case in the formula for $\tilde{f}$ are just $x,x^3,\ldots,x^{2m-1}$;
\item the images in the second case are just $xy,x^3y,\ldots,x^{2m-1}y$;
\item the images in the third case are just $y,x^2y,\ldots,x^{2m-2}y$;
\item the images in the fourth case are just $1,x^2,\ldots,x^{2m-2}$.
\end{itemize}
Hence, $\tilde{f}$ is surjective onto $\M_{2^n}$ and thus is a indeed a permutation of $\M_{2^n}$, as we wanted to show.

\section{Appendix C: Modular \texorpdfstring{$2$}{2}-groups have even complete mappings}

Let $k$ be an even positive integer. We will show that the modular group of order $16k$,
\[
\M_{16k}=\langle x,y:x^{8k}=y^2=1, y^{-1}xy=x^{1+4k}\rangle,
\]
has an even complete mapping. With $k=2^{n-4}$ for $n\geq5$, this together with Appendix B covers the statement of Proposition \ref{specialCasesProp}(5) except for $\M_{16}$, which will be dealt with in Appendix D.

Unlike for our treatment of semidihedral groups in Appendix A, taking $\tilde{f}\circ\inv$ for the complete mapping $f$ specified by Hall and Paige in \cite[proof of Lemma 1]{HP55a} always results in an odd complete mapping of $\M_{16k}$ (as we checked by hand with an unpublished theoretical argument), and also modifying Hall and Paige's $f$ by multiplying with a constant in each of its four definition cases (as we did in our one-cycle complete mapping from Appendix B) does not seem to help -- we checked this for small values of $k$ with some GAP \cite{GAP4} computer experiments. A different approach thus appears to be necessary.

We obtained the even complete mapping $f$ specified below using Wilcox' construction from \cite[proof of Proposition 7]{Wil09a}. More specifically, set $N:=\{1,x^{4k}\}$, a (central) order $2$ normal subgroup of $\M_{16k}$. Then $\M_{16k}/N\cong\AC_{8k}$. Wilcox' construction allows us to lift a complete mapping $\overline{f}$ of the quotient group $\AC_{8k}$ to a complete mapping of $\M_{16k}$. For $\overline{f}$, we used $-g=\inv\circ g$, where $g$ is the orthomorphism of $\AC_{8k}$ of cycle type $x_1x_{8k-1}$ obtained from the R-sequencing of $\AC_{8k}$ given by Friedlander, Gordon and Miller in \cite[proof of Theorem 7]{FGM78a}.

We omit the rather lengthy computational details that lead to the derivation of $f$ -- using \cite[Proposition 7]{Wil09a}, they would serve as proof that $f$ is a complete mapping of $\M_{16k}$, but it is actually easier to check this directly. In fact, the following table both defines $f$ and shows that $f$ is a complete mapping of $\M_{16k}$. To understand this table, note that we write the elements of $\M_{16k}$ in normal form as $y^{\epsilon}x^{\ell}$ with $\epsilon\in\{0,1\}$ and $\ell\in\{0,1,\ldots,8k-1\}$ (the different order of the two powers compared to Appendix B is intentional), which leads to the group product
\[
(y^{\epsilon_1}x^{\ell_1})\cdot(y^{\epsilon_2}x^{\ell_2})=
\begin{cases}
y^{\epsilon_1+\epsilon_2}x^{\ell_1+\ell_2}, & \text{if }\epsilon_2=0\text{ or }2\mid\ell_1, \\
y^{\epsilon_1+\epsilon_2}x^{\ell_1+\ell_2+4k}, & \text{otherwise}.
\end{cases}
\]
Consider an element $g=y^{\epsilon}x^{\ell}\in\M_{16k}$ in normal form. The second column of the following table describes a case into which $g$ falls according to simple arithmetical restrictions on $\epsilon$ and $\ell$. More precisely, the case description consists of a quadruple $(\epsilon,r,\ell_0,\ell_1)$ of parameters, where $r\in\{0,1,2,3\}$ is $\ell\mod{4}$, the remainder of dividing $\ell$ by $4$, and $\ell$ ranges inclusively from $\ell_0$ to $\ell_1$. The third column then gives a uniform formula for the function value $f(g)=y^{\epsilon'}x^{\ell'}$ in that case. Some cases only concern a single element $g\in\M_{16k}$ -- then the third column just contains the normal form of the single element $f(g)$. The fourth column describes the possible pairs $(\epsilon',\ell')$ that can be achieved in that case, in a way that is analogous to the second column: The description consists of a quadruple $(\epsilon',r',\ell'_0,\ell'_1)$ such that $r'=\ell'\mod{4}$ and $\ell'_0\leq\ell'\leq\ell'_1$. The fifth column of the table contains a formula for $\tilde{f}(g)=gf(g)=y^{\epsilon''}x^{\ell''}$. Finally, the sixth column characterizes the pairs $(\epsilon'',\ell'')$ that can be achieved in that case, analogously to the second and fourth columns, but with a subtle difference: The characterization consists of a quadruple $(\epsilon'',r'',\ell''_0,\ell''_1)$, where $r'\in\{0,1,\ldots,7\}$ is the remainder of $\ell'$ upon division by $8$ (not $4$, as in the second and fourth columns), and $\ell''_0\leq\ell''\leq\ell''_1$. It is not hard to check that the cases described in the second (resp.~fourth, resp.~sixth) column partition all possibilities (in the \enquote{standard set} of $\{0,1\}\times\{0,1,\ldots,8k-1\}$) for $(\epsilon,\ell)$ (resp.~$(\epsilon',\ell')$, resp.~$(\epsilon'',\ell'')$), so that $f$ is a well-defined complete mapping of $\M_{16k}$.

\begin{center}
\begin{longtable}{|c|c|c|c|c|c|}
\hline
No. & $(\epsilon,r,\ell_0,\ell_1)$ & $y^{\epsilon'}x^{\ell'}$ & $(\epsilon',r',\ell'_0,\ell'_1)$ & $y^{\epsilon''}x^{\ell''}$ & $(\epsilon'',r'',\ell''_0,\ell''_1)$ \\ \hline
1 & $(0,0,0,0)$ & $1$ & $(0,0,0,0)$ & $1$ & $(0,0,0,0)$ \\ \hline
2 & $(0,0,4,2k)$ & $yx^{\ell-3}$ & $(1,1,1,2k-3)$ & $yx^{2\ell-3}$ & $(1,5,5,4k-3)$ \\ \hline
3 & $(0,0,2k+4,4k-4)$ & $x^{\ell-1}$ & $(0,3,2k+3,4k-5)$ & $x^{2\ell-1}$ & $(0,7,4k+7,8k-9)$ \\ \hline
4 & $(0,0,4k,4k)$ & $yx^{4k}$ & $(1,0,4k,4k)$ & $y$ & $(1,0,0,0)$ \\ \hline
5 & $(0,0,4k+4,6k)$ & $x^{\ell+4k-3}$ & $(0,1,1,2k-3)$ & $x^{2\ell+4k-3}$ & $(0,5,4k+5,8k-3)$ \\ \hline
6 & $(0,0,6k+4,8k-4)$ & $yx^{\ell+4k-1}$ & $(1,3,2k+3,4k-5)$ & $yx^{2\ell+4k-1}$ & $(1,7,7,4k-9)$ \\ \hline
7 & $(0,1,1,1)$ & $yx^{8k-1}$ & $(1,3,8k-1,8k-1)$ & $yx^{4k}$ & $(1,0,4k,4k)$ \\ \hline
8 & $(0,1,5,2k+1)$ & $x^{\ell-3}$ & $(0,2,2,2k-2)$ & $x^{2\ell-3}$ & $(0,7,7,4k-1)$ \\ \hline
9 & $(0,1,2k+5,4k-3)$ & $yx^{\ell+4k-2}$ & $(1,3,6k+3,8k-5)$ & $yx^{2\ell-2}$ & $(1,0,4k+8,8k-8)$ \\ \hline
10 & $(0,1,4k+1,4k+1)$ & $x^{8k-1}$ & $(0,3,8k-1,8k-1)$ & $x^{4k}$ & $(0,0,4k,4k)$ \\ \hline
11 & $(0,1,4k+5,6k+1)$ & $yx^{\ell-3}$ & $(1,2,4k+2,6k-2)$ & $yx^{2\ell+4k-3}$ & $(1,7,4k+7,8k-1)$ \\ \hline
12 & $(0,1,6k+5,8k-3)$ & $x^{\ell-2}$ & $(0,3,6k+3,8k-5)$ & $x^{2\ell-2}$ & $(0,0,4k+8,8k-8)$ \\ \hline
13 & $(0,2,2,2k-2)$ & $yx^{\ell}$ & $(1,2,2,2k-2)$ & $yx^{2\ell}$ & $(1,4,4,4k-4)$ \\ \hline
14 & $(0,2,2k+2,4k-2)$ & $yx^{\ell+4k-2}$ & $(1,0,6k,8k-4)$ & $yx^{2\ell+4k-2}$ & $(1,2,2,4k-6)$ \\ \hline
15 & $(0,2,4k+2,6k-2)$ & $x^{\ell}$ & $(0,2,4k+2,6k-2)$ & $x^{2\ell}$ & $(0,4,4,4k-4)$ \\ \hline
16 & $(0,2,6k+2,8k-2)$ & $x^{\ell+4k-2}$ & $(0,0,2k,4k-4)$ & $x^{2\ell+4k-2}$ & $(0,2,2,4k-6)$ \\ \hline
17 & $(0,3,3,2k-1)$ & $x^{\ell+4k}$ & $(0,3,4k+3,6k-1)$ & $x^{2\ell+4k}$ & $(0,6,4k+6,8k-2)$ \\ \hline
18 & $(0,3,2k+3,4k-1)$ & $yx^{\ell+4k-1}$ & $(1,2,6k+2,8k-2)$ & $yx^{2\ell-1}$ & $(1,5,4k+5,8k-3)$ \\ \hline
19 & $(0,3,4k+3,6k-1)$ & $yx^{\ell}$ & $(1,3,4k+3,6k-1)$ & $yx^{2\ell+4k}$ & $(1,6,4k+6,8k-2)$ \\ \hline
20 & $(0,3,6k+3,8k-1)$ & $x^{\ell+4k-1}$ & $(0,2,2k+2,4k-2)$ & $x^{2\ell+4k-1}$ & $(0,5,5,4k-3)$ \\ \hline
21 & $(1,0,0,0)$ & $yx^{4k-2}$ & $(1,2,4k-2,4k-2)$ & $x^{4k-2}$ & $(0,6,4k-2,4k-2)$ \\ \hline
22 & $(1,0,4,2k-4)$ & $x^{\ell}$ & $(0,0,4,2k-4)$ & $yx^{2\ell}$ & $(1,0,8,4k-8)$ \\ \hline
23 & $(1,0,2k,2k)$ & $x^{2k-1}$ & $(0,3,2k-1,2k-1)$ & $yx^{4k-1}$ & $(1,7,4k-1,4k-1)$ \\ \hline
24 & $(1,0,2k+4,4k)$ & $x^{\ell+4k-2}$ & $(0,2,6k+2,8k-2)$ & $yx^{2\ell+4k-2}$ & $(1,6,6,4k-2)$ \\ \hline
25 & $(1,0,4k+4,6k-4)$ & $yx^{\ell}$ & $(1,0,4k+4,6k-4)$ & $x^{2\ell}$ & $(0,0,8,4k-8)$ \\ \hline
26 & $(1,0,6k,6k)$ & $yx^{2k-1}$ & $(1,3,2k-1,2k-1)$ & $x^{8k-1}$ & $(0,7,8k-1,8k-1)$ \\ \hline
27 & $(1,0,6k+4,8k-4)$ & $yx^{\ell+4k-2}$ & $(1,2,2k+2,4k-6)$ & $x^{2\ell+4k-2}$ & $(0,6,6,4k-10)$ \\ \hline
28 & $(1,1,1,2k-3)$ & $x^{\ell+4k}$ & $(0,1,4k+1,6k-3)$ & $yx^{2\ell+4k}$ & $(1,2,4k+2,8k-6)$ \\ \hline
29 & $(1,1,2k+1,4k-3)$ & $yx^{\ell-1}$ & $(1,0,2k,4k-4)$ & $x^{2\ell+4k-1}$ & $(0,1,1,4k-7)$ \\ \hline
30 & $(1,1,4k+1,6k-3)$ & $yx^{\ell}$ & $(1,1,4k+1,6k-3)$ & $x^{2\ell+4k}$ & $(0,2,4k+2,8k-6)$ \\ \hline
31 & $(1,1,6k+1,8k-3)$ & $x^{\ell-1}$ & $(0,0,6k,8k-4)$ & $yx^{2\ell-1}$ & $(1,1,4k+1,8k-7)$ \\ \hline
32 & $(1,2,2,2)$ & $yx^{4k-1}$ & $(1,3,4k-1,4k-1)$ & $x^{4k+1}$ & $(0,1,4k+1,4k+1)$ \\ \hline
33 & $(1,2,6,2k-2)$ & $x^{\ell-3}$ & $(0,3,3,2k-5)$ & $yx^{2\ell-3}$ & $(1,1,9,4k-7)$ \\ \hline
34 & $(1,2,2k+2,4k-2)$ & $yx^{\ell-1}$ & $(1,1,2k+1,4k-3)$ & $x^{2\ell-1}$ & $(0,3,4k+3,8k-5)$ \\ \hline
35 & $(1,2,4k+2,4k+2)$ & $x^{4k-1}$ & $(0,3,4k-1,4k-1)$ & $yx$ & $(1,1,1,1)$ \\ \hline
36 & $(1,2,4k+6,6k-2)$ & $yx^{\ell+4k-3}$ & $(1,3,3,2k-5)$ & $x^{2\ell+4k-3}$ & $(0,1,4k+9,8k-7)$ \\ \hline
37 & $(1,2,6k+2,8k-2)$ & $x^{\ell+4k-1}$ & $(0,1,2k+1,4k-3)$ & $yx^{2\ell+4k-1}$ & $(1,3,3,4k-5)$ \\ \hline
38 & $(1,3,3,2k-1)$ & $x^{\ell+4k-3}$ & $(0,0,4k,6k-4)$ & $yx^{2\ell+4k-3}$ & $(1,3,4k+3,8k-5)$ \\ \hline
39 & $(1,3,2k+3,4k-1)$ & $yx^{\ell+4k-2}$ & $(1,1,6k+1,8k-3)$ & $x^{2\ell-2}$ & $(0,4,4k+4,8k-4)$ \\ \hline
40 & $(1,3,4k+3,6k-1)$ & $yx^{\ell+4k-3}$ & $(1,0,0,2k-4)$ & $x^{2\ell-3}$ & $(0,3,3,4k-5)$ \\ \hline
41 & $(1,3,6k+3,8k-1)$ & $x^{\ell-2}$ & $(0,1,6k+1,8k-3)$ & $yx^{2\ell-2}$ & $(1,4,4k+4,8k-4)$ \\ \hline
\caption{Details on our even complete mapping $f$ of $\M_{2^n}$.}
\label{modularTable}
\end{longtable}
\end{center}

It remains to show that $f$ is an even permutation. Following the approach in Appendix A, we will count the inversions of $f$ with respect to a certain strict total order of $\M_{16k}$, namely the one where
\[
y^{\epsilon_1}x^{\ell_1}<y^{\epsilon_2}x^{\ell_2} :\Leftrightarrow \epsilon_1<\epsilon_2, \text{or }\epsilon_1=\epsilon_2\text{ and }\ell_1<\ell_2.
\]
Unlike in Appendix A, there are too many cases to count the inversions by hand, though (one would need to consider $41^2=1681$ pairs of case combinations). Instead, we will give a theoretical argument for why the inversions of $f$ are counted by a quadratic polynomial in $k$, which leads to the following key result:

\begin{proposition*}
For all even $k\geq2$, the number $I(f)$ of inversions of $f$ with respect to the above total order of $\M_{16k}$ is given by $59k^2+19k-6$. In particular, $I(f)$ is even, whence $f$ is an even permutation of $\M_{16k}$.
\end{proposition*}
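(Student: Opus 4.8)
The plan is to avoid the infeasible direct enumeration of the $41^2$ ordered pairs of rows of Table \ref{modularTable}, and instead exploit the piecewise-affine nature of $f$ to reduce $I(f)$ to a lattice-point count, which I will argue is a quadratic polynomial in $k$ on the even integers; three explicit data points then pin down the coefficients. By the inversion criterion used in Appendix A (Bóna, \cite[Lemma 3.8]{Bon12a}), $\pi(f)\equiv I(f)\pmod 2$, so it suffices to evaluate $I(f)$ as a function of $k$.

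First I would record the structural features visible in the table: within each of the $41$ cases the parity component $\epsilon'$ is constant, and the $x$-exponent $\ell'$ of $f(y^{\epsilon}x^{\ell})$ is an affine function of $\ell$ with slope $1$ or $2$. Moreover, columns two through six certify that on each case $\ell$ runs through an arithmetic progression of common difference $4$ whose endpoints are affine in $k$ with \emph{even} leading coefficient (the only $k$-multiples occurring are $0,2k,4k,6k,8k$), and that the image exponent stays in a single contiguous range modulo $8k$, so there is no wraparound to track within a case. I would then decompose $I(f)=\sum_{i,j}I_{ij}$, where $I_{ij}$ counts inverted pairs $(a,b)$ with $a$ in case $i$ and $b$ in case $j$. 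Writing $a=y^{\epsilon_i}x^{s}$ and $b=y^{\epsilon_j}x^{t}$, both conditions $a<b$ and $f(b)<f(a)$ unravel, after comparing the fixed $\epsilon$- and $\epsilon'$-components and then the affine exponents, into a bounded system of linear inequalities in $(s,t)$; hence $I_{ij}$ is the number of integer points in a convex polygonal region, subject to $s\equiv r_i$ and $t\equiv r_j\pmod 4$, whose defining vertices are affine in $k$.

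This is the crux of the argument. I would show that, because every $k$-coefficient appearing in the endpoints is even, for even $k$ all the relevant thresholds $0,2k,4k,6k,8k$ are $\equiv 0\pmod 4$ (and $8k\equiv 0\pmod 8$ governs the reduction of image exponents, where the slope-$2$ maps act on residues). Consequently the residue bookkeeping is rigid, and the Ehrhart-type quasi-polynomial counting these lattice points degenerates to an honest polynomial of degree at most $2$ in $k$ with no nonconstant period. Summing over all case pairs, $I(f)$ is therefore a polynomial of degree at most $2$ in the even variable $k$. Being determined by three even sample points, $I(f)$ can be computed directly (e.g.\ in GAP \cite{GAP4}) for $k=2,4,6$, giving the values $268,1014,2232$; interpolation yields $I(f)=59k^2+19k-6$ for all even $k\geq 2$. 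Evenness of $I(f)$ is then immediate, since $59k^2+19k-6\equiv k^2+k=k(k+1)\equiv 0\pmod 2$, whence $f$ is an even permutation.

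The main obstacle will be the middle step: one must verify that within \emph{every} row of the table the affine image genuinely lands in the stated contiguous range without wrapping around modulo $8k$, and that the combined conditions ``$a<b$'' and ``$f(b)<f(a)$'' really do cut out a single convex region rather than one requiring further splits forced by the relation $x^{8k}=1$. It is precisely the parity of $k$ that keeps all the progressions aligned modulo $4$ and modulo $8$ and thus prevents the quasi-polynomial from acquiring a nonconstant period — this is what makes the reduction to a genuine quadratic polynomial legitimate, and mirrors why the case structure of the table is only consistent for even $k$. Once the polynomiality and the degree bound are secured, the remainder is the routine interpolation above.
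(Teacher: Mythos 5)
Your overall strategy is the same as the paper's (reduce to a structural argument that $I(f)$ is a quadratic polynomial in $k$, then pin down the coefficients by interpolation from computed values), but your execution has a genuine gap at exactly the step you call the crux. Counting lattice points in a region whose defining inequalities have endpoints affine in $k$ does \emph{not} give a single polynomial for all $k$: parametric lattice-point counts are only \emph{piecewise} (quasi-)polynomial, with the polynomial allowed to change whenever the combinatorial type of the region changes, i.e.\ whenever one of the $\max$/$\min$ comparisons such as $\max\{\ell+1,\gamma_1k+\delta_1\}$ or $\min\{\ell+(\zeta-\zeta')k+(\eta-\eta'-1),\gamma_2k+\delta_2\}$ resolves differently. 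Your parity observation (all $k$-coefficients even, $k$ even) only kills the \emph{periodic} part of the quasi-polynomial; it does nothing about these chamber walls. Two bounding lines with different $k$-slopes cross at a finite value of $k$ determined by the constant terms, and since the constants appearing in the table are as large as roughly $27$ while the slope gaps are multiples of $2$, the crossings can occur as late as $k\approx 13$. This is precisely why the paper's argument carries the standing hypothesis $k\geq 14$: several of its case eliminations reduce to contradictions of the form $2k\leq 13$, $2k\leq 22$, or $2k\leq 27$, which are only contradictions once $k\geq 14$.

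Consequently your interpolation from $k=2,4,6$ is unjustified: those three points may lie in a different chamber (or chambers) than the asymptotic one, so agreement of a quadratic with the count at $k=2,4,6$ says nothing about large $k$. The repair is what the paper actually does: prove single-polynomial behaviour only on the stable range $k\geq 14$, interpolate from three computed values \emph{inside} that range (the GAP computations cover $2\leq k\leq 18$, so $k=14,16,18$ are available), and then cite the same computations to confirm the formula separately for the small even $k$ below the stable range. If you want to keep your interpolation points at $k=2,4,6$, you would instead have to verify explicitly that every one of the finitely many $\max$/$\min$ comparisons arising from the $41^2$ case pairs is already decided the same way at $k=2$ as for $k\to\infty$ -- a claim that is false in spirit and in any case nowhere established in your argument.
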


\begin{proof}
For $2\leq k\leq18$, we verified this using GAP \cite{GAP4}. It suffices to show that for $k\geq14$, one has $I(f)=c_2k^2+c_1k+c_0$ for some rational constants $c_i$ (independent of $k$) -- then $I(f)=59k^2+19k-6$ can be inferred using interpolation. Let $a,b\in\M_{16k}$, each corresponding to one of the $41$ cases from the definition of $f$, and write $a=y^{\epsilon}x^{\ell}$ and $b=y^{\epsilon'}x^{\ell'}$. Then the values of $\epsilon$ and $\epsilon'$ are constant (depending on the case), and
\begin{equation}\label{ellEq}
\alpha_1k+\beta_1\leq\ell\leq\alpha_2k+\beta_2, \ell\equiv\beta_1\equiv\beta_2\Mod{4}
\end{equation}
for some $\alpha_i\in\{0,2,4,6,8\}$ with $\alpha_2\in\{\alpha_1,\alpha_1+2\}$ (the equality $\alpha_2=\alpha_1$ occurs if and only if the case for $\ell$ is singular, corresponding to a single value of $\ell$) and some integers $\beta_i$ with $|\beta_i|\leq 6$. Analogously,
\begin{equation}\label{ellPrimeEq}
\gamma_1k+\delta_1\leq\ell'\leq\gamma_2k+\delta_2, \ell'\equiv\delta_1\equiv\delta_2\Mod{4}
\end{equation}
for some $\gamma_i\in\{0,2,4,6,8\}$ with $\gamma_2\in\{\gamma_1,\gamma_1+2\}$ and some integers $\delta_i$ with $|\delta_i|\leq6$. The function values $f(a)$ and $f(b)$ are of a particular shape, namely
\[
f(a)=y^{\delta}x^{\ell+\zeta k+\eta}\text{ and }f(b)=y^{\delta'}x^{\ell'+\zeta'k+\eta'}
\]
for some integer constants $\delta,\delta'\in\{0,1\}$, $\zeta,\zeta'\in\{0,4\}$ and $\eta,\eta'$ with $|\eta|,|\eta'|\leq3$. We will call $\epsilon$, $\epsilon'$, $\alpha_1$, $\alpha_2$, $\beta_1$, $\beta_2$, $\gamma_1$, $\gamma_2$, $\delta_1$, $\delta_2$, $\delta$, $\delta'$, $\zeta$, $\zeta'$, $\eta$ and $\eta'$ the \emph{case parameters}, as they characterize which pair of cases for $a$ and $b$ we are considering.

It is enough to prove that the number of pairs $(a,b)$ satisfying these case-pair-specific conditions as well as $a<b$ and $f(b)<f(a)$ is given by a polynomial of the form $c'_2k^2+c'_1k+c'_0$ for some rational constants $c'_i$, which may depend on the case parameters, but not on $k$, as the total number of inversions of $f$ then is simply a sum of $41^2$ such polynomials. In the remainder of this proof, we will use the words \enquote{linear polynomial} and \enquote{quadratic polynomial} a bit more liberally than usual, in the senses of \enquote{polynomial of degree at most $1$} and \enquote{polynomial of degree at most $2$} respectively.

Now, as far as the condition $a<b$ is concerned, there are essentially three possibilities:
\begin{itemize}
\item If $\epsilon=1$ and $\epsilon'=0$, then this condition is always false, whence the inversion count for this particular pair of cases is $0$, and we are done.
\item If $\epsilon=0$ and $\epsilon'=1$, then this condition is always satisfied.
\item If $\epsilon=\epsilon'$, then this condition is equivalent to $\ell<\ell'$, i.e., to $\ell+1\leq\ell'$.
\end{itemize}
An analogous discussion for the condition $f(b)<f(a)$ shows that we are done if $\delta=0$ and $\delta'=1$ (for then the condition is always false, leading to $0$ inversions altogether), that the condition is always satisfied if $\delta=1$ and $\delta'=0$, and that in case $\delta=\delta'$, the condition is equivalent to $\ell'\leq\ell+(\zeta-\zeta')k+(\eta-\eta'-1)$. Overall, there are four possibilities for how $\ell'$ may be bounded in terms of $\ell$ in those cases we still need to investigate (and the conditions for when each possibility occurs are formulated in terms of the case parameters $\epsilon$, $\epsilon'$, $\delta$, $\delta'$ alone):
\begin{enumerate}
\item $\gamma_1k+\delta_1\leq\ell'\leq\gamma_2k+\delta_2$ (i.e., no further restrictions compared to the bounds on $\ell'$ from above).
\item $\max\{\ell+1,\gamma_1k+\delta_1\}\leq\ell'\leq\gamma_2k+\delta_2$.
\item $\gamma_1k+\delta_1\leq\ell'\leq\min\{\ell+(\zeta-\zeta')k+(\eta-\eta'-1),\gamma_2k+\delta_2\}$.
\item $\max\{\ell+1,\gamma_1k+\delta_1\}\leq\ell'\leq\min\{\ell+(\zeta-\zeta')k+(\eta-\eta'-1),\gamma_2k+\delta_2\}$.
\end{enumerate}

For bound (1), the number of $\ell'$ for each given $\ell$ does not depend on $\ell$ and is, more precisely, given by the formula
\begin{equation}\label{countingEq}
\frac{1}{4}((\gamma_2-\gamma_1)k+\delta_2-\delta_1)+1,
\end{equation}
a linear (and possibly constant, in case $\gamma_2=\gamma_1$) polynomial in $k$ (note that $\ell'$ must adhere to a specific congruence class modulo $4$, whence the division by $4$). Moreover, the number of $\ell$ is
\[
\frac{1}{4}((\alpha_2-\alpha_1)k+\beta_2-\beta_1)+1,
\]
another linear polynomial in $k$. The total number of inversions to account for is the product of these two polynomials, hence, a quadratic polynomial in $k$ that depends only on the case parameters, as required.

For bound (2), we divide the full range for $\ell$ into two subintervals (one of which may be empty), and it suffices to show that each of these two subcases accounts for a number of inversions expressed by a quadratic polynomial in $k$ (which may depend on the case parameters).
\begin{itemize}
\item First, consider those $\ell$ such that $\alpha_1k+\beta_1\leq\ell\leq\gamma_1k+\delta_1-1$. We claim that the number of integers $\ell$ satisfying these bounds as well as formula (\ref{ellEq}) can be given by a linear polynomial in $k$ (depending on the case parameters). Indeed, let $c_1\in\{0,1,2,3\}$ be the smallest nonnegative integer such that $\delta_1-1-c_1\equiv\beta_1\Mod{4}$. Due to $\ell\equiv\beta_1\Mod{4}$, the given bounds on $\ell$ are actually equivalent to $\alpha_1k+\beta_1\leq\ell\leq\gamma_1k+\delta_1-1-c_1$ (note that $\gamma_1k\equiv0\Mod{4}$ since $\gamma_1$ and $k$ are both even).
\begin{itemize}
\item If $\gamma_1<\alpha_1$, then $\gamma_1k+\delta_1-1-c_1<\alpha_1k+\beta_1$ (and, in particular, the number of $\ell$ is $0$), for otherwise,
\begin{align*}
2k&\leq(\alpha_1-\gamma_1)k\leq\delta_1-\beta_1-1-c_1\leq\delta_1-\beta_1-1=|\delta_1-\beta_1-1| \\
&\leq|\delta_1|+|\beta_1|+1\leq13,
\end{align*}
a contradiction to our assumption that $k\geq14$ (which is not yet needed at its full strength here).
\item If $\gamma_1=\alpha_1$ and $\delta_1-1-c_1<\beta_1$, then we also have $\gamma_1k+\delta_1-1-c_1<\alpha_1k+\beta_1$, and there are $0$ such $\ell$.
\item Otherwise, we have $\alpha_1k+\beta_1\leq\gamma_1k+\delta_1-1-c_1$ (a similar argumentation to the one above, using that $k\geq14$, shows that this holds if $\alpha_1<\gamma_1$), and the number of corresponding $\ell$ is given by a linear polynomial in $k$ that depends on whether or not $\gamma_1k+\delta_1-1-c_1\leq\alpha_2k+\beta_2$, which holds if and only if $\gamma_1<\alpha_2$ or $\gamma_1=\alpha_2$ and $\delta_1-1-c_1\leq\beta_2$ (note that each of these conditions is formulated in terms of case parameters alone). The said polynomial then is
\[
\frac{1}{4}((\gamma_1-\alpha_1)k+\delta_1-1-c_1-\beta_1)+1
\]
respectively
\[
\frac{1}{4}((\alpha_2-\alpha_1)k+\beta_2-\beta_1)+1.
\]
\end{itemize}
Moreover, for each such $\ell$, we have $\max\{\ell+1,\gamma_1k+\delta_1\}=\gamma_1k+\delta_1$, and so the condition on $\ell'$ simplifies to $\gamma_1k+\delta_1\leq\ell'\leq\gamma_2k+\delta_2$. Hence, for each given $\ell$, the number of matching $\ell'$ is given by a linear polynomial in $k$ (the same as in formula (\ref{countingEq})), and so in total, this subcase accounts for a number of inversions that is quadratic in $k$ (a product of two linear polynomials in $k$).
\item Now consider those $\ell$ such that $\gamma_1k+\delta_1\leq\ell\leq\alpha_2k+\beta_2$ (equivalently, the lower bound on $\ell$ may be replaced by $\gamma_1k+\delta_1-c_1+3$). If $\gamma_1>\alpha_2$, or if $\gamma_1=\alpha_2$ and $\delta_1-c_1+3>\beta_2$, then these bounds cannot be satisfied, so the number of corresponding $\ell$ is $0$. Otherwise, we have $\gamma_1k+\delta_1-c_1+3\leq\alpha_2k+\beta_2$, and the corresponding $\ell$ form an arithmetic progression with increment $4$, starting with $\max\{\alpha_1k+\beta_1,\gamma_1k+\delta_1-c_1+3\}$ and ending with $\alpha_2k+\beta_2$. For each such $\ell$, since $\max\{\ell+1,\gamma_1k+\delta_1\}=\ell+1$, we find that the corresponding $\ell'$ are characterized by the bounds $\ell+1\leq\ell'\leq\gamma_2k+\delta_2$. If $c_2\in\{0,1,2,3\}$ is the smallest nonnegative integer such that $\beta_1+1+c_2\equiv\delta_2\Mod{4}$, then for each given $\ell$, the number of matching $\ell'$ is given by the formula
\[
\max\{0,\frac{1}{4}(\gamma_2k+\delta_2-\ell-1-c_2)+1\}.
\]
The total inversion count for this range of $\ell$ is equal to one of the (possibly empty) sums
\[
\sum_{\ell=\gamma_1k+\delta_1+c_1-3,\ell\equiv\beta_1\Mod{4}}^{\gamma_2k+\delta_2-c_2+3}{\left(\frac{1}{4}(\gamma_2k+\delta_2-\ell-1-c_2)+1\right)}
\]
or
\[
\sum_{\ell=\alpha_1k+\beta_1,\ell\equiv\beta_1\Mod{4}}^{\gamma_2k+\delta_2-c_2+3}{\left(\frac{1}{4}(\gamma_2k+\delta_2-\ell-1-c_2)+1\right)},
\]
depending on whether or not $\alpha_1k+\beta_1\leq\gamma_1k+\delta_1+c_1-3$, which holds if and only if $\alpha_1<\gamma_1$, or $\alpha_1=\gamma_1$ and $\beta_1\leq\delta_1+c_1-3$. In any case, the inversion count for this range of $\ell$ is a quadratic polynomial in $k$ depending solely on the case parameters.
\end{itemize}

The treatment of bound (3) is analogous to the one for bound (2), and we omit it. The only remarkable difference is that the discussion of bound (3) requires larger lower bounds on $k$ to work than bound (2) (our assumption $k\geq14$ is strong enough, however). For example, the lower segment for $\ell$, the one where
\[
\min\{\ell+(\zeta-\zeta')k+(\eta-\eta'-1),\gamma_2k+\delta_2\}=\ell+(\zeta-\zeta')k+(\eta-\eta'-1),
\]
consists of just those $\ell$ for which
\[
\alpha_1k+\beta_1\leq\ell\leq(\gamma_2-\zeta+\zeta')k+(\delta_2-\eta+\eta'+1).
\]
Checking that these bounds are sensical (i.e., there are values of $\ell$ satisfying them as well as formula (\ref{ellEq})) in case $\alpha_1<\gamma_2-\zeta+\zeta'$ requires the following bound to be false, where $c_3\in\{0,1,2,3\}$ is the smallest nonnegative integer such that $\delta_2-\eta+\eta'+1-c_3\equiv\beta_1\Mod{4}$:
\begin{align*}
2k &\leq(\gamma_2-\zeta+\zeta'-\alpha_1)k<\beta_1-\delta_2+\eta-\eta'-1+c_3=|\beta_1-\delta_2+\eta-\eta'-1+c_3| \\
&\leq|\beta_1|+|\delta_2|+|\eta|+|\eta'|+1+c_3\leq 22.
\end{align*}

Finally, we discuss bound (4), the most complicated case. The strategy is similar to the one for bounds (2) and (3) in that we subdivide the range for $\ell$ into segments on each of which the maximum and minimum in the bounds for $\ell'$ can be simplified. Note that
\[
\max\{\ell+1,\gamma_1k+\delta_1\}=
\begin{cases}
\gamma_1k+\delta_1, & \text{if }\ell\leq\gamma_1k+\delta_1-1-c_1, \\
\ell+1, & \text{otherwise},
\end{cases}
\]
and
\begin{align*}
&\min\{\ell+(\zeta-\zeta')k+(\eta-\eta'-1),\gamma_2k+\delta_2\} \\
&=
\begin{cases}
\ell+(\zeta-\zeta')k+(\eta-\eta'-1), & \text{if }\ell\leq(\gamma_2-\zeta+\zeta')k+\delta_2-\eta'+\eta+1-c_3, \\
\gamma_2k+\delta_2, & \text{otherwise}.
\end{cases}
\end{align*}
The details of our subdivision into $\ell$-segments depend on whether or not
\begin{equation}\label{thresholdEq}
\gamma_1k+\delta_1-1-c_1\leq(\gamma_2-\zeta+\zeta')k+\delta_2-\eta'+\eta+1-c_3.
\end{equation}
\begin{itemize}
\item Subcase (a): $\gamma_1<\gamma_2-\zeta+\zeta'$, or $\gamma_1=\gamma_2-\zeta+\zeta'$ and $\delta_1-1-c_1\leq\delta_2-\eta+\eta'+1-c_3$. Then inequality (\ref{thresholdEq}) holds. We subdivide the range for $\ell$ into the following three segments:
\begin{itemize}
\item If $\alpha_1k+\beta_1\leq\ell\leq\gamma_1k+\delta_1-1-c_1$, then both the maximum and minimum simplify to the respective first option, and $\ell'$ is bounded as follows:
\[
\gamma_1k+\delta_1\leq\ell'\leq\ell+(\zeta-\zeta')k+(\eta-\eta'-1-c_4),
\]
where $c_4\in\{0,1,2,3\}$ is the smallest nonnegative integer such that $\beta_1+\eta-\eta'-1-c_4\equiv\delta_1\Mod{4}$. In particular, the number of matching $\ell'$ for each $\ell$ is given by
\[
\max\{0,\frac{1}{4}(\ell+(\zeta-\zeta'-\gamma_1)k+(\eta-\eta'-1-c_4-\delta_1))+1\}.
\]
The situation we have here is analogous to the one at the end of the discussion for bound (2), and like there, we arrive at the conclusion that the inversion count for this segment of $\ell$ is a quadratic polynomial in $k$ that only depends on the case parameters.
\item If $\gamma_1k+\delta_1+3-c_1\leq\ell\leq(\gamma_2-\zeta+\zeta')k-\eta+\eta'+1-c_3$, then $\ell'$ is bounded as follows:
\[
\ell+1+c_2\leq\ell'\leq\ell+(\zeta-\zeta')k+(\eta-\eta'-1-c_4).
\]
If $\zeta<\zeta'$, or if $\zeta=\zeta'$ and $\eta-\eta-1-c_4<1+c_2$, then these bounds are nonsensical, whence we count $0$ inversions for this segment of $\ell$. Otherwise, the $\ell'$-count per $\ell$ does not depend on $\ell$ and is equal to
\[
\frac{1}{4}((\zeta-\zeta')k+(\eta-\eta'-2-c_4-c_2))+1.
\]
The number of $\ell$ for this segment is either $0$ (which happens if and only if $\gamma_2-\zeta+\zeta'<\gamma_1$, or if $\gamma_2-\zeta+\zeta'=\gamma_1$ and $\delta_2-\eta+\eta'+1-c_3<\delta_1-c_1+3$), or it is given by one of four possible linear polynomials in $k$, depending on whether or not $\alpha_1k+\beta_1\leq\gamma_1k+\delta_1+3-c_1$ and whether or not $(\gamma_2-\zeta+\zeta')k-\eta+\eta'+1-c_3\leq\alpha_2k+\beta_2$, and either of these conditions can be characterized in terms of the case parameters alone. Therefore, the total inversion count for this range of $\ell$ is a quadratic polynomial in $k$ depending solely on the case parameters.
\item If $(\gamma_2-\zeta+\zeta')k+\delta_2-\eta+\eta'+5-c_3\leq\ell\leq\alpha_2k+\beta_2$, then the bounds on $\ell'$ simplify to $\ell+1+c_3\leq\ell'\leq\gamma_2k+\delta_2$, which means that the $\ell'$-count per $\ell$ is given by the formula
\[
\max\{0,\frac{1}{4}(\gamma_2k+\delta_2-\ell-1-c_3)+1\},
\]
and we can conclude analogously to the end of the argument for bound (2).
\end{itemize}
\item Subcase (b): $\gamma_1>\gamma_2-\zeta+\zeta'$, or $\gamma_1=\gamma_2-\zeta+\zeta'$ and $\delta_1-1-c_1>\delta_2-\eta+\eta'+1-c_3$. Then the negation of inequality (\ref{thresholdEq}) holds, and our subdivision of the range for $\ell$ into segments is as follows:
\begin{itemize}
\item If $\alpha_1k+\beta_1\leq\ell\leq(\gamma_2-\zeta+\zeta')k+\delta_2-\eta'+\eta+1-c_3$, then the bounds on $\ell'$, and thus the $\ell'$-count per $\ell$, is the same as for the first segment in Subcase (a). The situation is again analogous to the one at the end of the argument for bound (2) and can be concluded as such.
\item If $(\gamma_2-\zeta+\zeta')k+\delta_2-\eta'+\eta-c_3+5\leq\ell\leq\gamma_1k+\delta_1-1-c_1$, then the bounds on $\ell'$ simplify to the \enquote{trivial} ones, $\gamma_1k+\delta_1\leq\ell'\leq\gamma_2k+\delta_2$, whence the $\ell'$-count per $\ell$ is constant at
\[
\frac{1}{4}((\gamma_2-\gamma_1)k+(\delta_2-\delta_1))+1,
\]
and the count for $\ell$ is either $0$ (namely, if and only if either $\gamma_1<\gamma_2-\zeta+\zeta'$, or $\gamma_1=\gamma_2-\zeta+\zeta'$ and $\delta_1-1-c_1<\delta_2-\eta'+\eta-c_3+5$), or is given by a linear polynomial in $k$ that depends solely on the case parameters (as for the second segment in Subcase (a), there are four different formulas for this polynomial, depending on whether or not $\alpha_1k+\beta_1\leq(\gamma_2-\zeta+\zeta')k+\delta_2-\eta'+\eta-c_3+5$ and whether or not $\gamma_1k+\delta_1-1-c_1\leq\alpha_2k+\beta_2$). We remark that it is in the argumentation for this segment that the full power of the assumption $k\geq14$ is needed. Indeed, in order to show that $\gamma_1>\gamma_2-\zeta+\zeta'$ implies that $\gamma_1k+\delta_1-1-c_1>(\gamma_2-\zeta+\zeta')k+\delta_2-\eta'+\eta-c_3+5$ (a part of the above \enquote{if and only if} characterizing when the $\ell$-count is $0$), one argues that otherwise,
\begin{align*}
2k &\leq(\gamma_1-\gamma_2+\zeta-\zeta')k\leq\delta_2-\eta+\eta'-c_3+5-\delta_1+1+c_1 \\
&\leq|\delta_2|+|\eta|+|\eta'|+5+|\delta_1|+1+3\leq 6+3+3+5+6+1+3=27,
\end{align*}
a contradiction if $k\geq14$.
\item If $\gamma_1k+\delta_1+3-c_1\leq\ell\leq\alpha_2k+\beta_2$, then the $\ell'$-bounds are the same as for the third $\ell$-segment in Subcase (a), whence the $\ell'$-count per $\ell$ is the same as there, and again, we can conclude analogously to the end of the argument for bound (2).
\end{itemize}
\end{itemize}
\end{proof}

\section{Appendix D: Noncyclic groups of order \texorpdfstring{$16$}{16} have complete mappings of both parities}

We will prove the statement from the title of this Appendix. The Small Groups Library of GAP lists the groups of order $16$ as $\SmallGroup(16,i)$ with $i\in\{1,2,\ldots,14\}$. Note that $C_{16}=\SmallGroup(16,1)$, so we only need to consider indices $i$ with $2\leq i\leq14$. A few cases are easy to deal with using what was said in other parts of this paper:
\begin{itemize}
\item Any Singer cycle of $C_2^4=\SmallGroup(16,14)$ is an even complete mapping of it. An odd complete mapping of this group was given in the proof of Theorem \ref{mainTheo1} in Section \ref{sec2}.
\item $\AC_{16}=C_8\times C_2=\SmallGroup(16,5)$ has complete mappings of both parities by \cite[Theorem 6.6]{BGHJ91a} and \cite[Theorem 7]{FGM78a} -- see our proof of Proposition \ref{specialCasesProp}(1) in Section \ref{sec4}.
\item $\D_{16}=\SmallGroup(16,7)$ has complete mappings of both parities by \cite[Theorem 5.8]{BGHJ91a} and \cite[proof of Lemma 1]{HP55a} -- see the proof of Proposition \ref{specialCasesProp}(2).
\item $\Q_{16}=\SmallGroup(16,9)$ has complete mappings of both parities by \cite[Theorem 1]{WL00a} and \cite[proof of Lemma 1]{HP55a} -- see the proof of Proposition \ref{specialCasesProp}(3).
\item $\SD_{16}=\SmallGroup(16,8)$ has complete mappings of both parities by \cite[proof of Lemma 1]{HP55a} and our Appendix A -- see the proof of Proposition \ref{specialCasesProp}(4).
\item $\M_{16}=\SmallGroup(16,6)$ has an odd complete mapping by our Appendix B. An even complete mapping of it will be specified below.
\end{itemize}

It remains to specify an even complete mapping of $\SmallGroup(16,6)$, and complete mappings of both parities of $\SmallGroup(16,i)$ for $i\in\{2,3,4,10,11,12,13\}$. We found such complete mappings with a simple random search algorithm that we implemented in GAP \cite{GAP4}. We will give a refined polycyclic presentation of each group $G$ in question (read off from GAP). For example, for $G=\SmallGroup(16,6)=\M_{16}$, that presentation is
\begin{align*}
\langle x,y,z,t: &y^2=t^2=[z,x]=[t,x]=[z,y]=[t,y]=[t,z]=1, \\
&x^2=z, z^2=t, [y,x]=t\rangle.
\end{align*}
The refined polycyclic presentation leads to a normal form representation of the elements of $G$ as products of the generators from the presentation. These elements are listed internally in GAP in increasing lexicographical order. That is, if the four generators from the presentation are denoted by $x$, $y$, $z$ and $t$, the ordering of the group elements is as follows:
\begin{center}
\begin{longtable}[H]{|c|c|c|c|c|c|c|c|c|c|c|c|c|c|c|c|c|}
\hline
$i$ & 1 & 2 & 3 & 4 & 5 & 6 & 7 & 8 & 9 & 10 & 11 & 12 & 13 & 14 & 15 & 16 \\ \hline
$g_i$ & $1$ & $x$ & $y$ & $z$ & $t$ & $xy$ & $xz$ & $xt$ & $yz$ & $yt$ & $zt$ & $xyz$ & $xyt$ & $xzt$ & $yzt$ & $xyzt$ \\ \hline
\caption{Lexicographic ordering of group elements}
\end{longtable}
\end{center}
We specify the complete mappings themselves as permutations of the number set $\{1,2,\ldots,16\}$, identifying each number $i$ with the corresponding element $g_i\in G$. For example, for $G=\SmallGroup(16,6)$, the even complete mapping we found through random search is specified as
\[
(1,7,8,4,6,13,3,2,12,14,9,10,5,15,16);
\]
note that this is a $15$-cycle -- fixed points are omitted by convention. For the remaining groups, where we need to find complete mappings of both parities, our computations gave the following results:
\begin{itemize}
\item $G=\SmallGroup(16,2)=C_4\times C_4$:
\begin{itemize}
\item a refined polycyclic presentation of $G$:
\begin{align*}
\langle x,y,z,t: &z^2=t^2=[y,x]=[z,x]=[t,x]=[z,y]=[t,y]=[t,z]=1, \\
&x^2=z, y^2=t\rangle.
\end{align*}
\item an even complete mapping of $G$:
\[
(1,4,6,2,16,9,5)(3,10,7,11,8)(13,14,15).
\]
\item an odd complete mapping of $G$:
\[
(1,12,15,4,14,9,8,16,3,11)(2,7,5,13,6).
\]
\end{itemize}
\item $G=\SmallGroup(16,3)=C_2\ltimes (C_4\times C_2)$:
\begin{itemize}
\item a refined polycyclic presentation of $G$:
\begin{align*}
\langle x,y,z,t: &y^2=z^2=t^2=[z,x]=[t,x]=[z,y]=[t,y]=[t,z]=1, \\
&x^2=t,[y,x]=z\rangle.
\end{align*}
\item an even complete mapping of $G$:
\[
(2,15,12,11,7,3,9,4,5,13,14,6)(10,16).
\]
\item an odd complete mapping of $G$:
\[
(1,6,9,16,12,8,13,4,10,7,3,15,5,2).
\]
\end{itemize}
\item $G=\SmallGroup(16,4)=C_4\ltimes C_4$:
\begin{itemize}
\item a refined polycyclic presentation of $G$:
\begin{align*}
\langle x,y,z,t: &z^2=t^2=[z,x]=[t,x]=[z,y]=[t,y]=[t,z]=1, \\
&x^2=t, y^2=z, [y,x]=z\rangle.
\end{align*}
\item an even complete mapping of $G$:
\[
(1,8,10,12,13,4,11,3,9,15,2)(5,14,6,7,16).
\]
\item an odd complete mapping of $G$:
\[
(1,2,11,8,15,12,7,14,3,5,13,16,4,9).
\]
\end{itemize}
\item $G=\SmallGroup(16,10)=C_4\times C_2\times C_2$:
\begin{itemize}
\item a refined polycyclic presentation of $G$:
\begin{align*}
\langle x,y,z,t: &y^2=z^2=t^2=[y,x]=[z,x]=[t,x]=[z,y]=[t,y]=[t,z]=1, \\
&x^2=t\rangle.
\end{align*}
\item an even complete mapping of $G$:
\[
(1,12,4,5,7,6,13,8)(3,14,9)(10,11,15,16).
\]
\item an odd complete mapping of $G$:
\[
(1,13,16,10,8,15,12,14,2)(3,5,7,6,11,4).
\]
\end{itemize}
\item $G=\SmallGroup(16,11)=C_2\times \D_8$:
\begin{itemize}
\item a refined polycyclic presentation of $G$:
\begin{align*}
\langle x,y,z,t: &x^2=y^2=z^2=t^2=[z,x]=[t,x]=[z,y]=[t,y]=[t,z]=1, \\
&[y,x]=t\rangle.
\end{align*}
\item an even complete mapping of $G$:
\[
(1,6,10,8,14,7,13,11)(2,15,4,9,16)(3,12).
\]
\item an odd complete mapping of $G$:
\[
(1,13,3,14,12,2,8,11,16)(4,7,6,15,10,5).
\]
\end{itemize}
\item $G=\SmallGroup(16,12)=C_2\times\Q_8$:
\begin{itemize}
\item a refined polycyclic presentation of $G$:
\begin{align*}
\langle x,y,z,t: &z^2=t^2=[z,x]=[t,x]=[z,y]=[t,y]=[t,z]=1, \\
&x^2=[y,x]=t\rangle.
\end{align*}
\item an even complete mapping of $G$:
\[
(1,16,9)(2,11,3,12,7,14)(4,6,5,13,10,15).
\]
\item an odd complete mapping of $G$:
\[
(1,13,5,14,8)(3,7)(4,15,11,10)(6,9,12,16).
\]
\end{itemize}
\item $G=\SmallGroup(16,13)=C_2\ltimes(C_4\times C_2)=\D_8\circ C_4$:
\begin{itemize}
\item a refined polycyclic presentation of $G$:
\begin{align*}
\langle x,y,z,t: &x^2=y^2=t^2=[z,x]=[t,x]=[z,y]=[t,y]=[t,z]=1, \\
&z^2=[y,x]=t\rangle.
\end{align*}
\item an even complete mapping of $G$:
\[
(1,11,4,16,8,13,9,3,10,7,12,5,2)(6,14,15).
\]
\item an odd complete mapping of $G$:
\[
(1,15,7,2,3,10,6,8,16)(4,5,14,11,13,9).
\]
\end{itemize}
\end{itemize}
\end{document}